\documentclass{amsart}
\usepackage{mathrsfs}
\usepackage{blindtext}
\usepackage{amsmath,amsfonts,amsthm}
\usepackage{amssymb,latexsym}
\usepackage[colorlinks]{hyperref}
\usepackage{graphicx}
\usepackage[all,cmtip]{xy}
\usepackage{verbatim}
\usepackage{enumitem}

\usepackage{comment}
\usepackage{xcolor}

\usepackage{layout}
\usepackage{bbm}

%
%
%






\newtheorem{theorem}{Theorem}[section]
\newtheorem{lemma}[theorem]{Lemma}

\theoremstyle{definition}
\newtheorem{definition}[theorem]{Definition}
\newtheorem{example}[theorem]{Example}

\newtheorem{corollary}[theorem]{Corollary}
\newtheorem{proposition}[theorem]{Proposition}

\newtheorem{trm}{Theorem}

\newtheorem{crol}[trm]{Corollary}

\theoremstyle{remark}

\numberwithin{equation}{section}

\begin{document}

\title[Orbit Equivalence]{Orbit Equivalence of actions on Cartan pairs}


\author{Massoud Amini}
\address{Department of Mathematics,
	Faculty of Mathematical Sciences,
	Tarbiat Modares University, Tehran 14115-134, Iran}
\curraddr{}
\email{mamini@modares.ac.ir, mahdimoosazadeh7@gmail.com}
\thanks{}

\author{Mahdi Moosazadeh}
\address{}
\curraddr{}
\email{}
\thanks{}

\subjclass[2020]{Primary 46L55; Secondary 37A20}

\date{}

\dedicatory{}


\begin{abstract}
	We introduce and study the notion of continuous orbit equivalence of actions of countable discrete groups on Cartan pairs in  (twisted) groupoid context. We characterize orbit equivalence of actions in terms of the corresponding C$^*$-algebraic crossed products using Kumjian-Renault theory. We relate our notion to the classical notion of orbit equivalence of actions on topological spaces by showing that, under certain conditions, orbit equivalence of  actions on  (twisted) groupoids follows from orbit equivalence of  restricted actions on  unit spaces. We illustrate our results with concrete examples of continuous orbit equivalent actions on groupoids coming from odometer transformations. 
\end{abstract}

\maketitle

\section{Introduction}

Dynamical systems have ever been a fruitful source of examples for  operator algebras. The very early examples in type theory of von Neumann algebras,  given by Murray and von Neumann, used group actions on measure spaces. Later, W. Krieger  showed that for two ergodic non-singular systems, the associated von Neumann crossed product
factors are isomorphic iff the systems are orbit equivalent \cite{krie1, krie2}, building upon an earlier result of H. Dye  that all ergodic p.m.p actions are  orbit equivalent. Since then, orbit equivalence is extensively studied both in the measurable \cite{FMI} and topological \cite{GPS}, \cite{BT} setting. 

 Two p.m.p. actions $\Gamma_1 \curvearrowright X_1$ and $\Gamma_2 \curvearrowright X_2$ are called orbit equivalent if there exists an isomorphism $\phi:X_1 \rightarrow X_2$ of measure spaces sending orbits to orbits (a.e.). For discrete countable groups acting essentially free, this is known to be equivalent to the corresponding measure groupoids being isomorphic, and to the existence of an isomorphism between the corresponding von Neumann algebraic crossed products, preserving  $L^\infty$ masas \cite{Singer}, \cite{VII}.

The notion of orbit equivalence in topological setting is studied for $\mathbb{Z}^d$ actions on Cantor set in  \cite{GMPSII}, \cite{GPS}, and in general in \cite{li}.
The actions $\Gamma_1 \curvearrowright X_1$ and $\Gamma_2 \curvearrowright X_2$ of discrete groups  on topological spaces are continuous orbit equivalent if there exist a homeomorphism $\phi : X_1 \rightarrow X_2$ and continuous maps $a : \Gamma_1 \times X_1 \rightarrow \Gamma_2$ and $b : \Gamma_2 \times X_2 \rightarrow \Gamma_1$ with 
$
		\phi(\gamma_1 x_1) = a(\gamma_1,x_1) \phi(x_1)$ and $ \phi^{-1}(\gamma_2 x_2) = b(\gamma_2,x_2) \phi^{-1}(x_2)$, for $\gamma_i \in \Gamma_i$ and $x_i \in X_i$, $i = 1,2$.
The topological version of the above equivalence is known to hold with  $L^\infty$ masas replaced by $C_0$ masas \cite[Theorem 1.2]{li}.

It is natural to ask weather there is a notion of orbit equivalence for general $C^*$-dynamics and this is the main objective of present paper. As already seen in both measurable and continuous cases, the masas--or more precisely, the Cartan subalgebras--play a central role in orbit equivalence, and we rather define orbit equivalence of actions on Cartan pairs, that is, actions of countable discrete groups on $C^*$-algebras, preserving given Cartan subalgebras. This makes our approach inevitably related to twisted grroupoids, as any  Cartan pair comes from  a Weyl twisted groupoid \cite{renault-1}, and a Cartan invariant action on a separable $C^*$-algebra induces an action on the corresponding Weyl twisted groupoid \cite[Proposition 3.4]{BL-1}.  

In this paper,  we define continuous orbit equivalence (coe) of actions  on  groupoids (Definition \ref{def-coe-twisted}), and give characterizations of continuous orbit equivalence for topologically principally free actions (Definition \ref{def-topologically-prinipally-free}). 
The paper is organized as follows. In section \ref{2} we define the main notions of this paper and briefly review Kumjian-Renault theory. Section \ref{3} is devoted to proof of the main results. These results are illustrated by concrete examples in the last section.

The main results of this paper are as follows. All groupoids are locally compact, Hausdorff, second countable, and \'{e}tale (unless otherwise specified) and all groups are countable discrete. Throughout, by an  isomorphism of topological groupoids we mean an algebraic isomorphism which is also a homeomorphism. We also write $e_1$ and $e_2$ to denote the identity elements of groups $\Gamma_1$ and $\Gamma_2$, respectively.   

\begin{trm}\label{theorem-1}
	For topologically free actions $\Gamma_i \curvearrowright (G_i,\Sigma_i)$ on twisted groupoids, $i = 1,2$, consider the following statements:
	\begin{enumerate}
		
		\item $\Gamma_1 \curvearrowright (G_1,\Sigma_1) \sim_{\rm coe} \Gamma_2 \curvearrowright (G_2,\Sigma_2)$,
		
		\item there exists  $\mathbb{T}$-equivariant isomorphism $\psi: \Gamma_1 \ltimes \Sigma_1 \rightarrow  \Gamma_2 \ltimes \Sigma_2$ such that $\psi (\Gamma_1, \Sigma_1^{(0)}) = (\Gamma_2,\Sigma_2^{(0)})$ and $\psi(e_1,\sigma_1) = (e_2,\phi(\sigma_2))$, where $(\phi',\phi)$ is a twisted groupoid isomorphism between $(G_1,\Sigma_1)$ and $(G_2,\Sigma_2)$,
		
		\item there exists isomorphism $\theta :  C^*_r(\Gamma_1 \ltimes G_1, \Gamma_1 \ltimes \Sigma_1) \cong  C^*_r(\Gamma_2 \ltimes G_2,\Gamma_2 \ltimes \Sigma_2)$ such that, 
		\begin{itemize}
			\item $\theta(C_0(\Sigma_1^{(0)})) = C_0(\Sigma_2^{(0)}), $
			\item $\theta(\Gamma_1 \ltimes_r C_0(G_1^{(0)})) = \Gamma_2 \ltimes_r  C_0(G_2^{(0)}),$
			\item  $\theta(C^*_r(G_1,\Sigma_1)) = C^*_r(G_2,\Sigma_2)$.
		\end{itemize}
		
	\end{enumerate}
	Then $(1) \Leftrightarrow (2) \Rightarrow (3)$. These are equivalent for topologically principally free actions.
\end{trm}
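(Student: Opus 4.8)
The plan is to prove $(1)\Leftrightarrow(2)$ by translating directly between the orbit‑equivalence data and isomorphisms of the transformation groupoids $\Gamma_i\ltimes\Sigma_i$, to deduce $(2)\Rightarrow(3)$ from functoriality of the reduced twisted groupoid $C^*$‑algebra together with the bookkeeping of three distinguished subalgebras, and to obtain the converse $(3)\Rightarrow(2)$ for topologically principally free actions from the Kumjian--Renault reconstruction theorem. For $(1)\Leftrightarrow(2)$: by Definition \ref{def-coe-twisted}, a continuous orbit equivalence consists of a twisted groupoid isomorphism $(\phi',\phi)\colon(G_1,\Sigma_1)\to(G_2,\Sigma_2)$ together with continuous cocycles $a\colon\Gamma_1\times G_1\to\Gamma_2$ and $b\colon\Gamma_2\times G_2\to\Gamma_1$ satisfying the orbit‑compatibility identities $\phi(\gamma_1\cdot\sigma_1)=a(\gamma_1,\dot\sigma_1)\cdot\phi(\sigma_1)$ (and its mirror for $b$) together with the cocycle relations, where $\dot\sigma_1$ denotes the image of $\sigma_1$ in $G_1$. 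Given such data, put $\psi(\gamma_1,\sigma_1)=\bigl(a(\gamma_1,\dot\sigma_1),\phi(\sigma_1)\bigr)$; one checks that $\psi$ intertwines range, source and the semidirect‑product multiplications (multiplicativity using the cocycle identity for $a$), that it is a homeomorphism with inverse $(\gamma_2,\sigma_2)\mapsto(b(\gamma_2,\dot\sigma_2),\phi^{-1}(\sigma_2))$ (using the $a$--$b$ compatibility), and that it is $\mathbb{T}$‑equivariant because $\phi$ is and $a$ factors through $G_1$; the normalizations $a(e_1,\cdot)=e_2$ and $\phi(\Sigma_1^{(0)})=\Sigma_2^{(0)}$ give precisely $\psi(e_1,\sigma_1)=(e_2,\phi(\sigma_1))$ and $\psi(\Gamma_1,\Sigma_1^{(0)})=(\Gamma_2,\Sigma_2^{(0)})$. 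Conversely, from a $\psi$ as in (2) one reads off $\phi$ from the $e_1$‑fibre, $a$ from the $\Gamma_2$‑component of $\psi$ (which by $\mathbb{T}$‑equivariance depends only on $(\gamma_1,\dot\sigma_1)$ and is continuous), and $b$ from $\psi^{-1}$; the coe axioms then follow from $\psi$ being a $\mathbb{T}$‑equivariant isomorphism respecting the two distinguished subgroupoids, with topological freeness ensuring these cocycles are the unique ones compatible with the orbit equivalence.

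For $(2)\Rightarrow(3)$: a $\mathbb{T}$‑equivariant isomorphism $\psi\colon\Gamma_1\ltimes\Sigma_1\to\Gamma_2\ltimes\Sigma_2$ descends to $\bar\psi\colon\Gamma_1\ltimes G_1\to\Gamma_2\ltimes G_2$, and $(\bar\psi,\psi)$ is an isomorphism of twisted \'etale groupoids, so Renault's functoriality produces an isomorphism $\theta$ of the reduced twisted $C^*$‑algebras carrying the diagonal $C_0((\Gamma_1\ltimes G_1)^{(0)})=C_0(\Sigma_1^{(0)})$ onto $C_0(\Sigma_2^{(0)})$, which is the first bullet. Since $G_i^{(0)}$ is clopen in the Hausdorff \'etale groupoid $G_i$, both $\Gamma_i\ltimes G_i^{(0)}$ and $\{e_i\}\times G_i$ are clopen subgroupoids of $\Gamma_i\ltimes G_i$, so (by extension of functions by zero, isometrically and naturally in groupoid isomorphisms) they contribute $C^*$‑subalgebras of $C^*_r(\Gamma_i\ltimes G_i,\Gamma_i\ltimes\Sigma_i)$, namely — under the identification of the latter with $\Gamma_i\ltimes_r C^*_r(G_i,\Sigma_i)$ recalled in Section \ref{2} — exactly $\Gamma_i\ltimes_r C_0(G_i^{(0)})$ (the twist being trivial over units) and $C^*_r(G_i,\Sigma_i)$ (the $\{e_i\}$‑fibre). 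The two remaining conditions $\psi(\Gamma_1,\Sigma_1^{(0)})=(\Gamma_2,\Sigma_2^{(0)})$ and $\psi(e_1,\sigma_1)=(e_2,\phi(\sigma_1))$ then say precisely that $\bar\psi$ matches these two pairs of clopen subgroupoids, whence $\theta$ matches the corresponding subalgebras, giving the second and third bullets.

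For $(3)\Rightarrow(2)$, assume the actions are topologically principally free. The key structural point is that this makes $\Gamma_i\ltimes G_i$ topologically principal (Definition \ref{def-topologically-prinipally-free}); since every $u\in G_i^{(0)}$ with trivial isotropy in $\Gamma_i\ltimes G_i$ also has trivial isotropy in the subgroupoids $\{e_i\}\times G_i\cong G_i$ and $\Gamma_i\ltimes G_i^{(0)}$, density propagates and these too are topologically principal. Hence $\bigl(C^*_r(\Gamma_i\ltimes G_i,\Gamma_i\ltimes\Sigma_i),C_0(\Sigma_i^{(0)})\bigr)$ is a Cartan pair with Weyl twist $\Gamma_i\ltimes\Sigma_i$, and similarly the pairs built on $G_i$ and on $\Gamma_i\ltimes G_i^{(0)}$ are Cartan. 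Because $\theta$ preserves $C_0(\Sigma_i^{(0)})$, the Kumjian--Renault reconstruction theorem yields a unique $\mathbb{T}$‑equivariant isomorphism $\psi\colon\Gamma_1\ltimes\Sigma_1\to\Gamma_2\ltimes\Sigma_2$ inducing $\theta$. Finally, either directly — $\theta$ intertwines the canonical faithful maps from the reduced $C^*$‑algebras into $C_0$ of the line bundles over the groupoids via $\psi$, so it transports ``supported in a clopen subgroupoid $\mathcal H$'' to ``supported in $\psi(\mathcal H)$'' — or via the correspondence between intermediate $C^*$‑subalgebras of a Cartan pair and open intermediate subgroupoids, the second and third hypotheses on $\theta$ force $\psi(\Gamma_1\ltimes G_1^{(0)})=\Gamma_2\ltimes G_2^{(0)}$ and $\psi(\{e_1\}\times G_1)=\{e_2\}\times G_2$, i.e.\ $\psi(\Gamma_1,\Sigma_1^{(0)})=(\Gamma_2,\Sigma_2^{(0)})$ and $\psi(e_1,\sigma_1)=(e_2,\phi(\sigma_1))$ for a bijection $\phi\colon\Sigma_1\to\Sigma_2$ which, being the restriction of $\psi$ to the (invariant) $e_1$‑fibre, is automatically a twisted groupoid isomorphism; applying the uniqueness clause of the reconstruction theorem to $(C^*_r(G_i,\Sigma_i),C_0(\Sigma_i^{(0)}))$ moreover identifies it with the one induced by $\theta|_{C^*_r(G_1,\Sigma_1)}$.

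I expect the main obstacle to lie in this last implication: first, establishing from Definition \ref{def-topologically-prinipally-free} that topological principal freeness genuinely delivers topological principality of $\Gamma_i\ltimes G_i$ (and hence of its two distinguished clopen subgroupoids), since this is exactly what licenses Renault's machinery and is where the stronger hypothesis is essential; and second, the coherent bookkeeping showing that the \emph{single} reconstructed isomorphism $\psi$ restricts correctly on $\Gamma_i\ltimes G_i^{(0)}$ and on $\{e_i\}\times G_i$, which rests on the compatibility of $\theta$ with the canonical $C_0$‑valued maps (equivalently, on the intermediate‑subalgebra/open‑subgroupoid dictionary) together with the uniqueness in the reconstruction theorem applied coherently across all three groupoids.
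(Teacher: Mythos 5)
Your proposal is correct and follows essentially the same route as the paper: $(1)\Leftrightarrow(2)$ by translating the coe data into an isomorphism of the semidirect-product (twisted) groupoids as in Theorem \ref{theorem-orbit-equivalence-groupoid-products}, $(2)\Rightarrow(3)$ by lifting along the open inclusions of $(G_i,\Sigma_i)$ and $\Gamma_i\ltimes G_i^{(0)}$ into $(\Gamma_i\ltimes G_i,\Gamma_i\ltimes\Sigma_i)$, and $(3)\Rightarrow(2)$ via Weyl-groupoid reconstruction together with matching the distinguished subalgebras to open subgroupoids. The compatibility bookkeeping you flag as the main obstacle is precisely what the paper isolates and proves as Lemmas \ref{lemma-subgroupoid} and \ref{inv-subgroupoid}.
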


For the case of trivial twist, we get the following corollary.

\begin{crol}
	For topologically free actions $\Gamma_i \curvearrowright G_i$ on groupoids, $i = 1,2$, consider  the following statements:
	\begin{enumerate}
		
		\item $\Gamma_1 \curvearrowright G_1 \sim_{\rm coe} \Gamma_2 \curvearrowright G_2$,
		
		\item there exists groupoid isomorphism $\psi: \Gamma_1 \ltimes G_1 \rightarrow \Gamma_2 \ltimes G_2$ such that $\psi (\Gamma_1,G_1^{(0)}) = (\Gamma_2,G_2^{(0)})$ and $\psi(e_1,G_1) = \psi(e_2,G_2)$,

		\item there exists isomorphism $\theta :  C^*_r(\Gamma_1 \ltimes G_1) \cong  C^*_r(\Gamma_2 \ltimes G_2)$ such that,
		\begin{itemize}
			\item $\theta(C_0(G_1^{(0)})) = C_0(G_2^{(0)}), $
			\item $\theta(\Gamma_1 \ltimes_r C_0(G_1^{(0)})) = \Gamma_2 \ltimes_r  C_0(G_2^{(0)}),$
			\item  $\theta(C^*_r(G_1)) = C^*_r(G_2)$.
		\end{itemize}
	\end{enumerate}
	Then $(1) \Leftrightarrow (2) \Rightarrow (3)$. These are equivalent for topologically principally free actions. 
\end{crol}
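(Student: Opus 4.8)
The plan is to deduce the corollary from Theorem~\ref{theorem-1} by specializing to the trivial twist. Given an action $\Gamma_i \curvearrowright G_i$ on a plain groupoid, equip $\Sigma_i := G_i \times \mathbb{T}$ with the diagonal action $\gamma\cdot(g,z)=(\gamma\cdot g,\,z)$; this is the canonical $\mathbb{T}$-equivariant extension of the given action to the trivial twist over $G_i$, and topological (principal) freeness of $\Gamma_i\curvearrowright G_i$ coincides with that of $\Gamma_i\curvearrowright(G_i,\Sigma_i)$, since the condition only involves the unit space. Under the standard identifications for trivial twists one has $C^*_r(G_i,\Sigma_i)=C^*_r(G_i)$ and $C_0(\Sigma_i^{(0)})=C_0(G_i^{(0)})$, and, because the $\Gamma_i$-action is diagonal, the induced central extension satisfies $\Gamma_i\ltimes\Sigma_i\cong(\Gamma_i\ltimes G_i)\times\mathbb{T}$ over $\Gamma_i\ltimes G_i$, whence $C^*_r(\Gamma_i\ltimes G_i,\Gamma_i\ltimes\Sigma_i)=C^*_r(\Gamma_i\ltimes G_i)$ and $\Gamma_i\ltimes_r C_0(G_i^{(0)})$ is unaffected. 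With these identifications, statement (1) (resp. (3)) of the corollary matches statement (1) (resp. (3)) of Theorem~\ref{theorem-1}, so the only point to verify is that statement (2) of the corollary is equivalent to statement (2) of Theorem~\ref{theorem-1}.

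For $(2)_{\mathrm{Cor}}\Rightarrow(2)_{\mathrm{Thm}}$, given a groupoid isomorphism $\psi_0:\Gamma_1\ltimes G_1\to\Gamma_2\ltimes G_2$ with $\psi_0(\Gamma_1,G_1^{(0)})=(\Gamma_2,G_2^{(0)})$ and $\psi_0(e_1,G_1)=(e_2,G_2)$, I would take $\psi:=\psi_0\times\mathrm{id}_{\mathbb{T}}$, which is a $\mathbb{T}$-equivariant groupoid isomorphism $\Gamma_1\ltimes\Sigma_1\to\Gamma_2\ltimes\Sigma_2$ carrying $(\Gamma_1,\Sigma_1^{(0)})$ onto $(\Gamma_2,\Sigma_2^{(0)})$; letting $\phi:G_1\to G_2$ be the groupoid isomorphism coming from $\psi_0|_{\{e_1\}\times G_1}$, the restriction $\psi|_{\{e_1\}\times\Sigma_1}=\phi\times\mathrm{id}_{\mathbb{T}}$ is exactly the trivial-twist isomorphism $(\phi\times\mathrm{id}_{\mathbb{T}},\phi)$ between $(G_1,\Sigma_1)$ and $(G_2,\Sigma_2)$, which is the normalization required in Theorem~\ref{theorem-1}(2). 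Conversely, for $(2)_{\mathrm{Thm}}\Rightarrow(2)_{\mathrm{Cor}}$, a $\mathbb{T}$-equivariant isomorphism $\psi:\Gamma_1\ltimes\Sigma_1\to\Gamma_2\ltimes\Sigma_2$ is compatible with the free central $\mathbb{T}$-actions and hence descends to the quotients $\Gamma_i\ltimes\Sigma_i\big/\mathbb{T}=\Gamma_i\ltimes G_i$, producing a groupoid isomorphism $\psi_0$; the two hypotheses on $\psi$ — preservation of $(\Gamma_i,\Sigma_i^{(0)})$ and the fact that $\psi|_{\{e_1\}\times\Sigma_1}$ lifts the underlying isomorphism $\phi$ — then translate directly into $\psi_0(\Gamma_1,G_1^{(0)})=(\Gamma_2,G_2^{(0)})$ and $\psi_0(e_1,G_1)=(e_2,G_2)$.

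With $(2)_{\mathrm{Cor}}\Leftrightarrow(2)_{\mathrm{Thm}}$ established, the chain $(1)\Leftrightarrow(2)\Rightarrow(3)$ and the equivalence of all three statements under topological principal freeness (Definition~\ref{def-topologically-prinipally-free}) follow at once from the corresponding assertions of Theorem~\ref{theorem-1}. The only step requiring any attention is the bookkeeping in the second paragraph: one must confirm that the descent of a $\mathbb{T}$-equivariant isomorphism to the $\mathbb{T}$-orbit space is well defined and a homeomorphism — clear here, since the twists are products and the $\mathbb{T}$-action is the trivial principal one — and that the normalization $\psi|_{\{e_1\}\times\Sigma_1}$ prescribed in Theorem~\ref{theorem-1}(2) is exactly what pins down the image of $\{e_1\}\times G_1$ under $\psi_0$. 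Beyond invoking Theorem~\ref{theorem-1}, I expect no genuine obstacle.
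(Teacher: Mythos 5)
Your proposal is correct and follows essentially the same route as the paper: the corollary is obtained by specializing Theorem~\ref{theorem-1} to the trivial twists $\Sigma_i=\mathbb{T}\times G_i$ with the action trivial on the $\mathbb{T}$-factor, and your lift/descent bookkeeping for matching statement (2) (and implicitly (1)) is exactly the intended identification. Note only that the equivalence $(1)\Leftrightarrow(2)$ can also be read off directly from the untwisted Theorem~\ref{theorem-orbit-equivalence-groupoid-products}, and that the paper's condition ``$\psi(e_1,G_1)=\psi(e_2,G_2)$'' is a typo for $\psi(e_1,G_1)=(e_2,G_2)$, as you correctly interpreted.
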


We define continuous orbit equivalence of actions on Cartan pairs via the corresponding Weyl twisted groupoids (Definition \ref{cartanpair}), and get the following characterization.
 
\begin{crol}\label{main-corollary}
	Given Cartan pairs $(A_1,B_1)$ and $(A_2,B_2)$ and Cartan invariant topologically free actions $\Gamma_1 \curvearrowright A_1$ and $\Gamma_2 \curvearrowright A_2$, consider the following statements:
	\begin{enumerate}
		
		\item $\Gamma_1 \curvearrowright (A_1, B_1) \sim_{\rm coe} \Gamma_2 \curvearrowright (A_2, B_2)$,
		
		\item there exists isomorphism $\theta :  \Gamma_1 \ltimes_r A_1 \cong \Gamma_2 \ltimes_r A_2$ such that:,
		\begin{itemize}
			\item $\theta(B_1) = B_2, $
			\item $\theta(\Gamma_1 \ltimes_r B_1) = \Gamma_2 \ltimes_r  B_2,$
			\item  $\theta(A_1) = A_2$.
		\end{itemize}
		
	\end{enumerate}
	Then $(1) \Rightarrow (2)$, and these are equivalent for topologically principally free actions.
\end{crol}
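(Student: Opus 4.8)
The plan is to reduce the statement entirely to Theorem \ref{theorem-1} by means of the Kumjian--Renault machine. By Definition \ref{cartanpair}, continuous orbit equivalence of the Cartan-invariant actions $\Gamma_i \curvearrowright (A_i,B_i)$ is \emph{defined} to be continuous orbit equivalence, in the sense of Definition \ref{def-coe-twisted}, of the induced actions $\Gamma_i \curvearrowright (G_i,\Sigma_i)$ on the associated Weyl twisted groupoids; here $(G_i,\Sigma_i)$ is the Weyl twist of $(A_i,B_i)$ (so $A_i \cong C^*_r(G_i,\Sigma_i)$ and $B_i \cong C_0(\Sigma_i^{(0)}) = C_0(G_i^{(0)})$ via \cite{renault-1}) and $\Gamma_i \curvearrowright (G_i,\Sigma_i)$ is the action furnished by \cite[Proposition 3.4]{BL-1}, using separability of $A_i$. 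Likewise, topological freeness (resp.\ topological principal freeness) of $\Gamma_i \curvearrowright A_i$ means, by definition, the corresponding property of $\Gamma_i \curvearrowright (G_i,\Sigma_i)$, so the hypotheses of Theorem \ref{theorem-1} are exactly met.

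First I would record the identification of the $C^*$-algebraic objects in statement $(3)$ of Theorem \ref{theorem-1} with those in statement $(2)$ of the corollary. The key point is the standard isomorphism $C^*_r(\Gamma_i \ltimes G_i, \Gamma_i \ltimes \Sigma_i) \cong \Gamma_i \ltimes_r C^*_r(G_i,\Sigma_i) = \Gamma_i \ltimes_r A_i$, which carries $C_0(\Sigma_i^{(0)})$ onto the Cartan subalgebra $B_i$ of the crossed product, carries $\Gamma_i \ltimes_r C_0(G_i^{(0)})$ onto $\Gamma_i \ltimes_r B_i$, and carries $C^*_r(G_i,\Sigma_i)$ onto the canonical copy of $A_i$ inside $\Gamma_i \ltimes_r A_i$. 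Granting this, the three bullet conditions of Theorem \ref{theorem-1}$(3)$ become verbatim the three bullet conditions of the corollary's statement $(2)$, and the isomorphism $\theta$ of the one matches the isomorphism $\theta$ of the other.

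With these identifications in place the corollary drops out: for topologically free actions, statement $(1)$ of the corollary is statement $(1)$ of Theorem \ref{theorem-1}, which implies statement $(3)$ of Theorem \ref{theorem-1}, which is statement $(2)$ of the corollary; hence $(1) \Rightarrow (2)$. When the actions are topologically principally free, Theorem \ref{theorem-1} gives $(1) \Leftrightarrow (2) \Leftrightarrow (3)$ there, and transporting the implication $(3)\Rightarrow(1)$ back through the identification yields $(2)\Rightarrow(1)$ of the corollary, so $(1)\Leftrightarrow(2)$.

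The main obstacle is the second step: checking cleanly that the isomorphism $C^*_r(\Gamma_i \ltimes G_i, \Gamma_i \ltimes \Sigma_i) \cong \Gamma_i \ltimes_r A_i$ is simultaneously compatible with all three distinguished subalgebras — in particular that it intertwines the inclusion of $C^*_r(G_i,\Sigma_i)$ as the functions supported over $\{e_i\} \times \Sigma_i$ with the canonical inclusion $A_i \hookrightarrow \Gamma_i \ltimes_r A_i$, and that it takes $C_0$ of the (common) unit space to the Weyl--Cartan $B_i$ rather than merely to some conjugate. This amounts to bookkeeping comparing the convolution algebra $C_c(\Gamma_i \ltimes G_i; \Gamma_i \ltimes \Sigma_i)$ with the algebraic crossed product $C_c(\Gamma_i, C_c(G_i;\Sigma_i))$, together with faithfulness of the reduced norms on both sides; it is routine but is where the real content of the reduction lies, everything else being a direct appeal to Theorem \ref{theorem-1} and Definition \ref{cartanpair}.
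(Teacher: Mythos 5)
Your proposal is correct and follows essentially the same route as the paper: reduce to Theorem \ref{theorem-1} via Definition \ref{cartanpair}, and identify statement (3) of that theorem with statement (2) of the corollary through the isomorphism $C^*_r(\Gamma_i \ltimes G_i,\Gamma_i \ltimes \Sigma_i)\cong \Gamma_i\ltimes_r A_i$ of Proposition \ref{proposition-cartan-crossed-product}, matching the distinguished subalgebras. The compatibility bookkeeping you flag is exactly what the paper's short argument also relies on (via Propositions \ref{Cartan-groupoid} and \ref{proposition-cartan-crossed-product}), so there is no substantive difference.
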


We find conditions for rigidity of continuous orbit equivalence of actions on twisted groupoids, that is conditions under which such equivalence follow from equivalence of the restricted actions on the corresponding unit spaces.

\begin{trm}\label{proposition-unitspace-oe}
	For Cartan invariant actions $\Gamma_1 \curvearrowright C_r^*(G_1,\Sigma_1)$ and $\Gamma_2 \curvearrowright C_r^*(G_2,\Sigma_2)$ with isomorphism $\psi : C_r^*(G_1,\Sigma_1) \rightarrow C_r^*(G_2,\Sigma_2)$ satisfying $\psi(C_0(G_1^{(0)})) = C_0(G_2^{(0)})$,  and   induced actions $\Gamma_1 \curvearrowright G_1^{(0)} \sim_{\rm coe} \Gamma_2 \curvearrowright G_2^{(0)}$ with corresponding cocycle maps $a$ and $b$, if,
	\begin{align}\label{condition-theorem-unitspace-oe}
		a(\gamma_1,s(g_1)) = a(\gamma_1,r(g_1)), \  \ b(\gamma_2,s(g_2)) = b(\gamma_2,r(g_2)),
	\end{align}
	for $\gamma_i\in \Gamma_i, g_i\in G_i,  i=1,2$, then $\Gamma_1 \curvearrowright G_1 \sim_{\rm coe} \Gamma_2 \curvearrowright G_2$.
\end{trm}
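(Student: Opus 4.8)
The plan is to produce the data witnessing continuous orbit equivalence of the actions on the full groupoids by composing the given unit-space cocycles with the source (equivalently, the range) map, and then to verify the orbit-equivalence identities one clopen piece at a time, using the fact that the Weyl groupoids are topologically principal to rule out ``isotropy corrections.'' First I would fix notation. By Kumjian-Renault theory the isomorphism $\psi$ with $\psi(C_0(G_1^{(0)}))=C_0(G_2^{(0)})$ is implemented by a topological groupoid isomorphism $\phi\colon G_1\to G_2$, and the induced homeomorphism of spectra $G_1^{(0)}\to G_2^{(0)}$ is the one realizing the orbit equivalence of the induced actions on unit spaces; by \cite[Proposition 3.4]{BL-1} the actions $\Gamma_i\curvearrowright C_r^*(G_i,\Sigma_i)$ induce actions of $\Gamma_i$ on $G_i$ by topological groupoid automorphisms restricting to the given actions on $G_i^{(0)}$; and, since $C_0(G_i^{(0)})$ is Cartan in $C_r^*(G_i,\Sigma_i)$, each $G_i$ is topologically principal. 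Writing $\gamma\cdot(-)$ for these actions, I would define $a(\gamma_1,g_1):=a(\gamma_1,s(g_1))$ for $g_1\in G_1$ and $b(\gamma_2,g_2):=b(\gamma_2,s(g_2))$ for $g_2\in G_2$; these are continuous, and the cocycle identities and the compatibility with the unit spaces are inherited from those for the unit-space orbit equivalence. It then remains to prove $\phi(\gamma_1 g_1)=a(\gamma_1,s(g_1))\,\phi(g_1)$ for all $\gamma_1\in\Gamma_1$, $g_1\in G_1$, and symmetrically $\phi^{-1}(\gamma_2 g_2)=b(\gamma_2,s(g_2))\,\phi^{-1}(g_2)$.

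The key structural input is condition \eqref{condition-theorem-unitspace-oe}. For each fixed $\gamma_1$ it says that every arrow of $G_1$ has source and range in the same level set of the locally constant map $a(\gamma_1,-)\colon G_1^{(0)}\to\Gamma_2$; hence each level set $U_{\gamma_1,\delta}:=\{x\in G_1^{(0)}: a(\gamma_1,x)=\delta\}$ is clopen and $G_1$-invariant, $V_{\gamma_1,\delta}:=s^{-1}(U_{\gamma_1,\delta})=r^{-1}(U_{\gamma_1,\delta})$ is a clopen subgroupoid of $G_1$, and $G_1=\bigsqcup_{\delta\in\Gamma_2}V_{\gamma_1,\delta}$. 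It therefore suffices to verify the first identity on each $V_{\gamma_1,\delta}$, where $a(\gamma_1,s(g_1))=a(\gamma_1,r(g_1))=\delta$. Since $\phi$ and the group actions commute with $s$ and $r$, on $V_{\gamma_1,\delta}$ the elements $\phi(\gamma_1 g_1)$ and $\delta\cdot\phi(g_1)$ have the same source $\delta\cdot\phi(s(g_1))$ and the same range $\delta\cdot\phi(r(g_1))$, so
\[
c(g_1):=\phi(\gamma_1 g_1)^{-1}\,\bigl(\delta\cdot\phi(g_1)\bigr)
\]
is a well-defined element of the isotropy group of $G_2$ at $\delta\cdot\phi(s(g_1))$, depending continuously on $g_1\in V_{\gamma_1,\delta}$, and the desired identity is exactly the assertion that $c(g_1)$ is always a unit.

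Finally I would run the rigidity argument. On the unit space, $\phi$ intertwines the actions via $a$, so $c(x)=\delta\cdot\phi(x)\in G_2^{(0)}$ for every $x\in U_{\gamma_1,\delta}$; more generally, whenever $x=s(g_1)$ is such that the unit $\delta\cdot\phi(x)$ of $G_2$ has trivial isotropy, the isotropy element $c(g_1)$ is forced to equal that unit. The map $x\mapsto\delta\cdot\phi(x)$ is a homeomorphism of $U_{\gamma_1,\delta}$ onto an open subset of $G_2^{(0)}$, and since $G_2$ is topologically principal the units with trivial isotropy are dense there; pulling this back and using that $s$ is an open map, the set of $g_1\in V_{\gamma_1,\delta}$ with $c(g_1)\in G_2^{(0)}$ is dense in $V_{\gamma_1,\delta}$. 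As $c$ is continuous and $G_2^{(0)}$ is closed in $G_2$, it follows that $c(g_1)\in G_2^{(0)}$ for all $g_1$, and then tracing sources and ranges gives $c(g_1)=s(\phi(\gamma_1 g_1))$, whence $\phi(\gamma_1 g_1)=\delta\cdot\phi(g_1)=a(\gamma_1,s(g_1))\,\phi(g_1)$. The second identity follows by applying the same argument to $\psi^{-1}$, the cocycle $b$, and the topological principality of $G_1$. The main obstacle is precisely this last step: a priori $c(g_1)$ is a genuine isotropy element, and it is only the topological principality of the Weyl groupoid—a consequence of the Cartan hypothesis rather than of any freeness of the $\Gamma_i$-actions—together with the invariance of the level sets afforded by \eqref{condition-theorem-unitspace-oe} that forces it to be trivial. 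One must also take a little care in invoking Kumjian-Renault so that the homeomorphism implementing the unit-space orbit equivalence really is the restriction of the groupoid isomorphism $\phi$.
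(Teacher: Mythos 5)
Your proposal is correct, and its core verification runs along a genuinely different track than the paper's. The shared scaffolding is the same: you invoke Renault's rigidity (Proposition \ref{r-iso} together with the isomorphism (\ref{groupoid-renault-isomorphism})) to get a groupoid isomorphism $\phi\colon G_1\to G_2$ whose restriction to unit spaces is the Gelfand homeomorphism dual to $\psi|_{C_0(G_1^{(0)})}$, and \cite[Proposition 3.4]{BL-1} to get the induced actions on the groupoids; this is exactly the paper's reading of the hypotheses as well (including the point, which you rightly flag, that the unit-space orbit equivalence is taken with respect to that particular homeomorphism). Where you diverge is in proving $\phi(\gamma_1 g_1)=a(\gamma_1,s(g_1))\phi(g_1)$. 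The paper stays inside the Weyl-groupoid model: it fixes a normalizer $n$ and a point $x$, uses continuity of $a$ to find a neighborhood $U$ on which $a(\gamma,\cdot)$ is constant, applies condition (\ref{condition-theorem-unitspace-oe}) at the germs $[\alpha_n(u),\alpha_n,u]$ to get $a(\gamma,u)=a(\gamma,\alpha_n(u))$, and then checks directly that the partial homeomorphisms $\alpha_{\psi(\gamma n)}$ and $\alpha_{a(\gamma,x)\psi(n)}$ agree on $\psi(\gamma U)$; equality in $G(B)$ is by definition local agreement of partial homeomorphisms, so no density or principality argument is needed at this stage. You instead argue model-free in $G_2$: condition (\ref{condition-theorem-unitspace-oe}) makes the level sets of $a(\gamma_1,\cdot)$ clopen and $G_1$-invariant (here discreteness of $\Gamma_2$ plays the role of the paper's "constant on a neighborhood" step), on each piece the two candidate arrows have equal source and range, and the continuous isotropy-valued discrepancy $c$ is killed by density of trivially-isotropic units (topological principality of the Weyl groupoid), openness of $s$, and closedness of $G_2^{(0)}$ in the Hausdorff groupoid $G_2$ -- a density-plus-continuity argument in the spirit of Lemma \ref{topologcally free lemma}, but powered by principality of $G_2$ rather than freeness of the actions. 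What each approach buys: the paper's computation is shorter once one is committed to the germ model and never leaves the normalizer calculus, while yours isolates the abstract mechanism (invariant clopen decomposition plus rigidity from topological principality) and would apply verbatim to any groupoid isomorphism of topologically principal \'{e}tale Hausdorff groupoids intertwining the unit-space actions through $a$ and $b$, without unwinding Renault's construction. Both versions yield the $r$- and $s$-identities simultaneously via (\ref{condition-theorem-unitspace-oe}), as required by Definition \ref{groupoid oe}. I see no gap in your argument.
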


As an immediate consequence, we get the following further characterization of continuous orbit equivalence. 
\begin{crol}\label{main-corollary-unit-space}
	In the situation of last theorem when the twists are trivial, for continuous orbit equivalent topologically free actions $\Gamma_i \curvearrowright G_i^{(0)}$, $i = 1,2$ with cocycle maps $a$ and $b$, consider the following statements:
	\begin{enumerate}
		\item $	a(\gamma_1,s(g_1)) = a(\gamma_1,r(g_1))$ and $ b(\gamma_2,s(g_2)) = b(\gamma_2,r(g_2))$, for  $\gamma_i \in \Gamma_i$ and $g_i \in G_i$; $i=1,2$,
		\item $\Gamma_1 \curvearrowright G_1 \sim_{\rm coe} \Gamma_2 \curvearrowright G_2$,
		\item there exists isomorphism $\theta : \Gamma_1 \ltimes_r C^*_r(G_1) \rightarrow \Gamma_2 \ltimes_r C^*_r(G_2)$ such that,
		\begin{itemize}
			\item $\theta(C_0(G_1^{(0)})) = C_0(G_2^{(0)}), $
			\item $\theta(\Gamma_1 \ltimes_r C_0(G_1^{(0)})) = \Gamma_2 \ltimes_r  C_0(G_2^{(0)}),$
			\item  $\theta(C^*_r(G_1)) = C^*_r(G_2)$.
		\end{itemize}
	\end{enumerate}
	Then $(1)\Leftrightarrow(2)\Rightarrow (3)$. These are equivalent for topologically principally free actions. 
\end{crol}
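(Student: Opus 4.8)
The plan is to deduce the corollary from Theorem~\ref{proposition-unitspace-oe} and the trivial-twist form of Theorem~\ref{theorem-1}. The implication $(1)\Rightarrow(2)$ is Theorem~\ref{proposition-unitspace-oe} read with $\Sigma_i=G_i\times\mathbb{T}$: ``the situation of the last theorem'' already provides a Cartan-pair isomorphism $C^*_r(G_1)\to C^*_r(G_2)$ and a continuous orbit equivalence of the induced unit-space actions with cocycles $a,b$, statement $(1)$ is exactly hypothesis~\eqref{condition-theorem-unitspace-oe}, and the conclusion is $(2)$. For $(2)\Rightarrow(3)$ I would invoke the trivial-twist case of Theorem~\ref{theorem-1}: continuous orbit equivalence of $\Gamma_1\curvearrowright G_1$ and $\Gamma_2\curvearrowright G_2$ (this is $(2)$) produces a groupoid isomorphism $\Gamma_1\ltimes G_1\cong\Gamma_2\ltimes G_2$ respecting the distinguished subgroupoids, hence an isomorphism $\theta$ of the reduced crossed products (using $C^*_r(\Gamma_i\ltimes G_i)\cong\Gamma_i\ltimes_r C^*_r(G_i)$) with the three listed intertwining properties, i.e. $(3)$. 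The same case of Theorem~\ref{theorem-1} gives, for topologically principally free actions, the reverse implication $(3)\Rightarrow(2)$. Thus both the full equivalence $(1)\Leftrightarrow(2)\Leftrightarrow(3)$ in the principally free case and the assertion $(1)\Leftrightarrow(2)\Rightarrow(3)$ in general will follow once $(2)\Rightarrow(1)$ is established.

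The substantive point is therefore $(2)\Rightarrow(1)$. Here I would first use the equivalence $(1)\Leftrightarrow(2)$ of the trivial-twist Theorem~\ref{theorem-1} (which holds already for topologically free actions) to replace the given continuous orbit equivalence of $\Gamma_1\curvearrowright G_1$ and $\Gamma_2\curvearrowright G_2$ by a groupoid isomorphism $\psi\colon\Gamma_1\ltimes G_1\to\Gamma_2\ltimes G_2$ carrying $\Gamma_1\ltimes G_1^{(0)}$ onto $\Gamma_2\ltimes G_2^{(0)}$ and $\{e_1\}\times G_1$ onto $\{e_2\}\times G_2$. Restricting $\psi$ to $\Gamma_1\ltimes G_1^{(0)}$ recovers, via the classical groupoid description of topological orbit equivalence, the homeomorphism $\phi_0\colon G_1^{(0)}\to G_2^{(0)}$ and the cocycle $a$ (and, from $\psi^{-1}$, the cocycle $b$), while restricting $\psi$ to $\{e_1\}\times G_1$ gives a groupoid isomorphism $\phi\colon G_1\to G_2$ extending $\phi_0$; in particular $\psi(\gamma_1,x)=(a(\gamma_1,x),\phi_0(x))$ for $x\in G_1^{(0)}$ and $\psi(e_1,g_1)=(e_2,\phi(g_1))$ for $g_1\in G_1$. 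Now I would write a general element $(\gamma_1,g_1)$ of $\Gamma_1\ltimes G_1$ as a product of an element of $\Gamma_1\ltimes G_1^{(0)}$ and an element of $\{e_1\}\times G_1$ in the two ways in which the $G_1$-part $g_1$ can be attached --- at the range of $(\gamma_1,g_1)$ or at its source --- apply $\psi$ to each and multiply out, using $\phi_0\circ r=r\circ\phi$ and $\phi_0\circ s=s\circ\phi$. One factorization shows that the $\Gamma_2$-component of $\psi(\gamma_1,g_1)$ equals $a(\gamma_1,r(g_1))$, the other that it equals $a(\gamma_1,s(g_1))$; comparing, $a(\gamma_1,r(g_1))=a(\gamma_1,s(g_1))$. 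Running the symmetric computation with $\psi^{-1}$ yields $b(\gamma_2,r(g_2))=b(\gamma_2,s(g_2))$, which is $(1)$.

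The step I expect to be the main obstacle is the identification, in the middle of the previous paragraph, of the cocycles produced by restricting $\psi$ with the cocycles $a,b$ \emph{prescribed} in the statement: this amounts to knowing that the homeomorphism $\phi_0=\psi|_{G_1^{(0)}}$ may be taken to be the one underlying the fixed isomorphism $C^*_r(G_1)\to C^*_r(G_2)$, after which the cocycles agree by the uniqueness of cocycles for topologically free actions --- the same rigidity input used to prove Theorem~\ref{theorem-1}. (Alternatively one can bypass $\psi$ entirely: the cocycle identity for the continuous orbit equivalence of the groupoid actions, evaluated at a groupoid element $g_1$ and at the units $r(g_1),s(g_1)\in G_1^{(0)}$ separately, together with topological freeness of $\Gamma_2\curvearrowright G_2^{(0)}$ and a local-constancy argument trivializing the resulting isotropy elements on a dense set, forces that cocycle to be constant along the equivalence relation $r(g_1)\sim s(g_1)$ on $G_1^{(0)}$, which is $(1)$.) With $(2)\Rightarrow(1)$ in hand, the chain $(3)\Rightarrow(2)\Rightarrow(1)\Rightarrow(2)\Rightarrow(3)$ closes in the topologically principally free case, and $(1)\Leftrightarrow(2)\Rightarrow(3)$ remains in the topologically free case.
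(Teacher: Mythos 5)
Your proposal is correct and its skeleton is the paper's: $(1)\Rightarrow(2)$ is Theorem \ref{proposition-unitspace-oe} with trivial twists, and the implications involving $(3)$ (including the converse under topological principal freeness) come from the trivial-twist case of Theorem \ref{theorem-1}, using $C^*_r(\Gamma_i\ltimes G_i)\cong\Gamma_i\ltimes_r C^*_r(G_i)$ from Proposition \ref{proposition-cartan-crossed-product}, exactly as intended. The one place you deviate is $(2)\Rightarrow(1)$: the paper gets this in one stroke from Lemma \ref{topologcally free lemma}(1), which is verbatim the $r$--$s$ invariance of the cocycles of a continuous orbit equivalence of topologically free actions on groupoids; with $(2)$ read, as the paper does, as a groupoid-level orbit equivalence implementing the given unit-space data $(\phi_0,a,b)$, there is nothing left to identify. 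Your main argument instead passes through Theorem \ref{theorem-orbit-equivalence-groupoid-products} to build $\psi\colon\Gamma_1\ltimes G_1\to\Gamma_2\ltimes G_2$ and extracts the identity from the two factorizations $(\gamma_1,g_1)=(\gamma_1,r(g_1))(e_1,g_1)=(e_1,\gamma_1 g_1)(\gamma_1,s(g_1))$; the computation is valid, but it is a detour (the construction of $\psi$ in Theorem \ref{theorem-orbit-equivalence-groupoid-products} itself rests on Lemma \ref{topologcally free lemma}), and it is exactly what creates the cocycle-identification ``obstacle'' you flag, which evaporates in the direct route; the uniqueness-of-cocycles argument you sketch does patch it, so no gap results. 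Your parenthetical alternative is, in substance, the paper's own proof of Lemma \ref{topologcally free lemma}(1). In short, same tools and same structure; your version merely re-derives a lemma the paper quotes.
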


\section{Continuous Orbit Equivalence}\label{2}
	An action $\Gamma \curvearrowright X$ on a topological space is called topologically free if for each $x \in X$ and non identity element $\gamma \in \Gamma$, $\{x \in X | \gamma x \neq x \}$ is dense in $X$.
		For a topological groupoid $G$, Aut$(G)$ is the group of all groupoid isomorphisms $\phi: G\rightarrow G$. By an action of a group $\Gamma$ on a groupoid $G$ we mean a group homomorphism $: \Gamma \rightarrow$ Aut$(G)$. We say that $\Gamma \curvearrowright G$ is (topologically) free if the induced action $\Gamma \curvearrowright G^{(0)}$ is (topologically) free.
	The semidirect product $\Gamma \ltimes G$ is the groupoid $\Gamma \times G$ with the following operations 
		$(\gamma,g)(\gamma',g') := (\gamma \gamma', (\gamma'^{-1} g)g'), \  (\gamma, g)^{-1} = (\gamma^{-1},\gamma g^{-1})$,
	for $\gamma,\gamma' \in \Gamma$ and $g,g'\in G$. 
	
	\begin{definition}\label{groupoid oe}
		Two actions $\Gamma_1 \curvearrowright G_1$ and $\Gamma_2 \curvearrowright G_2$ on topological (not necessarily \'{e}tale) groupoids  are said to be continuous $r$-orbit equivalent if there exist a groupoid isomorphism $\phi : G_1 \rightarrow G_2$, and continuous maps $a : \Gamma_1 \times G_1^{(0)} \rightarrow \Gamma_2$ and $b : \Gamma_2 \times G_2^{(0)} \rightarrow \Gamma_1$, with
$			\phi(\gamma_1 g_1) = a(\gamma_1,r(g_1)) \phi(g_1)$ and $\phi^{-1}(\gamma_2 g_2) = b(\gamma_2,r(g_2)) \phi^{-1}(g_2)$,
for $\gamma_i\in \Gamma_i, g_i\in G_i, i=1,2$. The continuous  $s$-orbit equivalence is defined similarly. These actions are called continuous orbit equivalent, writing $\Gamma_1 \curvearrowright G_1\sim_{\rm coe}\Gamma_2 \curvearrowright G_2$, if they are both continuous $r$- and $s$-orbit equivalent with the same cocycle maps $a$ and $b$. If moreover, the cocycle maps are  independent of their second variable, giving isomorphism of the underlying groups, the actions are called conjugate, and write $\Gamma_1 \curvearrowright G_1\sim_{\rm con}\Gamma_2 \curvearrowright G_2$.
\end{definition}

	A topological space $X$ could be regarded as a  (co-trivial) groupoid with $G^{(2)}=$ Diag$(X)=:\Delta_X$ and trivial inverse and multiplication, and  in this case, the above notion of orbit equivalence is the same as the classical one. 

Note that though we do not assume a priory algebraic conditions on maps $a$ and $b$, they are indeed not only cocycle maps in the usual sense of classical dynamics (compare with \cite[Lemma 2.8]{li}), but also in the sense of actions on groupoids. More precisely, we have the following result.

	\begin{lemma}\label{topologcally free lemma}
		Let $\Gamma_1 \curvearrowright G_1$ and $\Gamma_2 \curvearrowright G_2$ be two continuous $r$-orbit equivalent topologically free actions. Let the maps $\phi,a,b$ be as they defined in Definition \ref{groupoid oe}. For  $\gamma_i, \gamma_i' \in \Gamma_i$ and $g_i, g_i' \in G_i$,   $i = 1,2$, 
		\begin{enumerate}
			\item $a(\gamma_1,s(g_1)) = a(\gamma_1,r(g_1))$ and $b(\gamma_2,s(g_2)) = b(\gamma_2,r(g_2))$,
			
			\item when products are defined,
			\begin{itemize}
				\item $a(\gamma_1',r(g_1')) a(\gamma_1 ,r(g_1)) =a(\gamma_1'\gamma_1, r((\gamma_1^{-1}g_1') g_1)), $
				\item $b(\gamma_2',r(g_2')) b(\gamma_2 ,r(g_2)) =b(\gamma_2'\gamma_2, r((\gamma_2^{-1}g_2') g_2)), $
			\end{itemize}
			\item $ a(\gamma_1,r(g_1))^{-1} = a(\gamma_1^{-1} , r(\gamma_1 g_1^{-1}))$, \  $b(\gamma_2,r(g_2))^{-1} = b(\gamma_2^{-1} , r(\gamma_2 g_2^{-1})),$
			\item $b(a(\gamma_1,r(g_1)),\phi(g_1)) = \gamma_1$, \  $a(b(\gamma_2,r(g_2)),\phi(g_2)) = \gamma_2$.
		\end{enumerate}
	\end{lemma}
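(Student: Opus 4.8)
The plan is to reduce all four statements to a single \emph{rigidity principle}: if a discrete group $\Lambda$ acts topologically freely on an \'etale groupoid $H$ and an element $\eta\in\Lambda$ fixes every point of a nonempty open set $W\subseteq H$, then $\eta=e_\Lambda$. Indeed, $\eta$ then fixes the nonempty open set $r(W)\subseteq H^{(0)}$ (the range map being open in the \'etale case), so $\{y\in H^{(0)}:\eta y\ne y\}$ omits an open set, cannot be dense, and hence $\eta=e_\Lambda$ by topological freeness. Since $\Gamma_1,\Gamma_2$ are discrete and $a,b,r,s$ are continuous, maps such as $g_1\mapsto a(\gamma_1,r(g_1))$ are locally constant; so whenever I produce a continuous $c\colon G_1\to\Lambda$ (each $c$ below is visibly continuous, being assembled from $r$, $s$, $\phi$, the action maps, $a$, $b$, and discrete-group operations) together with a homeomorphism $F$ of $G_1$ onto a groupoid $H$ carrying a topologically free $\Lambda$-action, such that $c(g)\cdot F(g)=F(g)$ for all $g$, I may conclude $c\equiv e_\Lambda$: fix $g_0$, pick an open $W\ni g_0$ on which $c\equiv\eta$, note $\eta$ fixes the nonempty open set $F(W)\subseteq H$, and apply the principle. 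This will be used with $(\Lambda,H,F)=(\Gamma_2,G_2,\phi)$ and with $(\Gamma_1,G_1,\mathrm{id})$; recall that $\Gamma_i\curvearrowright G_i$ topologically free means, by definition, that $\Gamma_i\curvearrowright G_i^{(0)}$ is topologically free.

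For (1), I would apply $\phi$ to $\gamma_1 g_1^{-1}=(\gamma_1 g_1)^{-1}$, valid because the $\Gamma_1$-action is by groupoid automorphisms. Using $r(g_1^{-1})=s(g_1)$, the left side becomes $a(\gamma_1,s(g_1))\,\phi(g_1)^{-1}$, while the right side is $\big(a(\gamma_1,r(g_1))\,\phi(g_1)\big)^{-1}=a(\gamma_1,r(g_1))\,\phi(g_1)^{-1}$, the $\Gamma_2$-action being by automorphisms as well. Cancelling $a(\gamma_1,r(g_1))^{-1}$ on the left and then taking inverses shows that $c(g_1):=a(\gamma_1,r(g_1))^{-1}a(\gamma_1,s(g_1))$ fixes $\phi(g_1)$ for every $g_1$; the rigidity principle with $F=\phi$ gives $c\equiv e_2$, i.e.\ $a(\gamma_1,s(g_1))=a(\gamma_1,r(g_1))$. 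The statement for $b$ is symmetric.

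The remaining parts are variations on this. For (2), I would first prove the translation identity $a(\gamma_1',r(\gamma_1 g_1))\,a(\gamma_1,r(g_1))=a(\gamma_1'\gamma_1,r(g_1))$ by applying $\phi$ to $\gamma_1'(\gamma_1 g_1)=(\gamma_1'\gamma_1)g_1$, expanding both sides with the defining relation and with $\delta\cdot(\delta'\cdot y)=(\delta\delta')\cdot y$, and feeding $a(\gamma_1'\gamma_1,r(g_1))^{-1}a(\gamma_1',r(\gamma_1 g_1))a(\gamma_1,r(g_1))$ into the principle as $c$; the stated cocycle identity then follows by using (1) to replace $r(g_1')$ by $r(\gamma_1 g_1)$ and $r((\gamma_1^{-1}g_1')g_1)$ by $r(g_1)$ (legitimate, when the product is defined, because then $s(g_1')=\gamma_1 r(g_1)$ and $s(\gamma_1^{-1}g_1')=r(g_1)$). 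For (3), applying $\phi$ to $\gamma_1^{-1}(\gamma_1 g_1)=g_1$ shows $a(\gamma_1^{-1},r(\gamma_1 g_1))\,a(\gamma_1,r(g_1))$ fixes $\phi(g_1)$, hence equals $e_2$; and $a(\gamma_1^{-1},r(\gamma_1 g_1))=a(\gamma_1^{-1},r(\gamma_1 g_1^{-1}))$ by (1), since $s(\gamma_1 g_1^{-1})=\gamma_1 r(g_1)=r(\gamma_1 g_1)$. For (4), applying $\phi^{-1}$ to $\phi(\gamma_1 g_1)=a(\gamma_1,r(g_1))\phi(g_1)$ returns $\gamma_1 g_1$ on the left and $b\big(a(\gamma_1,r(g_1)),r(\phi(g_1))\big)\,g_1$ on the right, so $\gamma_1^{-1}b\big(a(\gamma_1,r(g_1)),\phi(g_1)\big)$ fixes $g_1$; the principle in its $(\Gamma_1,G_1,\mathrm{id})$ form then yields $b(a(\gamma_1,r(g_1)),\phi(g_1))=\gamma_1$, and the companion identity for $a\circ b$ follows by the mirror argument with $\phi$ and $\phi^{-1}$ interchanged. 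Beyond the principle, the only facts used are the defining relations, the automorphism properties of the two actions and of $\phi$, and the arithmetic $r(gh)=r(g)$, $r(g^{-1})=s(g)$, $r(\gamma g)=\gamma r(g)$, $s(\gamma g)=\gamma s(g)$.

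I expect the only genuinely non-formal point to be the rigidity principle itself, and within it the passage from ``$\eta$ fixes an open subset of $H$'' to ``$\eta$ fixes an open subset of $H^{(0)}$'', which relies on the range map being open; this is automatic for the \'etale groupoids assumed for the main results, and is where \'etaleness enters (in the genuinely non-\'etale setting of Definition \ref{groupoid oe} one would need to impose openness of $r$, or work with open sets that meet the unit space). Everything else is routine bookkeeping.
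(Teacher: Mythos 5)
Your proof is correct, and its algebraic core is the same as the paper's: in each part you apply $\phi$ (or $\phi^{-1}$) to an identity expressing that the actions and $\phi$ are groupoid morphisms, and then cancel a ``pointwise stabilizing'' group element using topological freeness; your extra detours (proving the translation identity $a(\gamma_1',r(\gamma_1 g_1))a(\gamma_1,r(g_1))=a(\gamma_1'\gamma_1,r(g_1))$ first and then converting it to the stated form of (2) via (1), and proving (3) directly rather than from (2)) are harmless reorganizations, and your reading of $b(a(\gamma_1,r(g_1)),\phi(g_1))$ as $b(a(\gamma_1,r(g_1)),r(\phi(g_1)))$ is the intended one. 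Where you differ from the paper is in how topological freeness is invoked. The paper first produces a dense set $D_i\subseteq G_i^{(0)}$ of points with trivial stabilizer (for the countable groups at hand this rests on a Baire-category intersection, which the paper leaves implicit), pulls it back along the open range map to a dense subset of $G_i$, verifies the identity there, and extends by continuity of $a$ and $b$. You instead use that $a,b$ take values in a discrete group, so your correction terms $c$ are locally constant; a single group element then fixes an open set of arrows pointwise, hence (by openness of $r$, i.e.\ \'{e}taleness, entering at exactly the same spot as in the paper) an open set of units, which contradicts density of the displaced set unless the element is trivial. Your variant argues directly from the definition of topological freeness, needs no Baire argument and no countability of the groups, and correctly isolates \'{e}taleness (or at least openness of $r$) as the only topological hypothesis beyond continuity; the paper's version is marginally shorter once the dense free set is granted. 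Either way the conclusion and all four identities come out as stated.
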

	\begin{proof}
	By topological freeness, there is a dense subset $D_i \subset G_i^{(0)}$ such that $\gamma  d = d$ forces $\gamma$ to be the identity of ${\Gamma_i}$, for $\gamma \in \Gamma_i$, $d \in D_i$,  $i = 1,2$. Since groupoids are  \'{e}tale, for an arbitrary open subset $U_i \subset G_i$, $r(U_i)$ is open and $r(U_i) \cap D_i \neq \emptyset$. Thus, $r^{-1}(D_i)$ is dense  in $G_i$,  $i=1,2$.
		For  $\gamma_i, \gamma_i' \in \Gamma_i$ and $g_i, g_i' \in G_i$,   $i = 1,2$, observe that $\phi(\gamma_1 g_1)^{-1} = (a(\gamma_1,r(g_1))\phi(g_1))^{-1}$ is equivalent to $a(\gamma_1,r(g_1^{-1}))\phi(g_1^{-1}) = a(\gamma_1,r(g_1))\phi(g_1^{-1})$,  and so by topological freeness and continuity of $a$, we get (1). By a similar argument, we also have $
		b(\gamma_2,s(g_2)) = b(\gamma_2,r(g_2))$. For (2), observe that,
		\begin{align*}
			\phi((\gamma_1'\gamma_1) ((\gamma_1^{-1}g_1') g_1)) &= \phi(\gamma_1' (g_1' (\gamma_1 g_1))) =  a(\gamma_1', r(g_1' (\gamma_1 g_1))) \phi(\gamma_1 ((\gamma_1^{-1}g_1')  g_1)) \\&= a(\gamma_1', r(g_1' (\gamma_1 g_1))) a(\gamma_1, r((\gamma_1^{-1} g_1') g_1)) \phi((\gamma_1^{-1}g_1')  g_1),
		\end{align*}
and $			\phi((\gamma_1'\gamma_1) ((\gamma_1^{-1}g_1') g_1)) = a(\gamma_1'\gamma_1, r((\gamma_1^{-1}g_1') g_1)) \phi((\gamma_1^{-1}g_1') g_1)$, and again by topological freeness, 
		\begin{align*}
			a(\gamma_1'\gamma_1, r((\gamma_1^{-1}g_1') g_1)) &= a(\gamma_1', r(g_1' (\gamma_1 g_1))) a(\gamma_1, r((\gamma_1^{-1} g_1') g_1)) \\&{=} a(\gamma_1', r(g_1')) a(\gamma_1, s((\gamma_1^{-1} g_1') g_1))= a(\gamma_1', r(g_1')) a(\gamma_1, s(g_1)) \\&{=} a(\gamma_1', r(g_1')) a(\gamma_1, r( g_1)),
		\end{align*}
		and the same for $b$. Now (3) follows  from (2) and (4) is proved similarly.  
	\end{proof}

\noindent Recall that the identity elements of $\Gamma_1$ and $\Gamma_2$ are denoted by $e_1$ and $e_2$.

	\begin{theorem}\label{theorem-orbit-equivalence-groupoid-products}
		Let $\Gamma_1 \curvearrowright G_1$, $\Gamma_2 \curvearrowright G_2$ be two topologically free actions.  The fallowing are equivalent:
		\begin{enumerate}
			\item $\Gamma_1 \curvearrowright G_1 \sim_{\rm coe} \Gamma_2 \curvearrowright G_2,$
			\item there exists groupoid isomorphism $\psi: \Gamma_1 \ltimes G_1 \rightarrow \Gamma_2 \ltimes G_2$ such that $\psi (\Gamma_1,G_1^{(0)}) = (\Gamma_2,G_2^{(0)})$ and $\psi(e_1,G_1) = (e_2,G_2).$
		\end{enumerate}
	\end{theorem}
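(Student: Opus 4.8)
The plan is to turn the orbit‑equivalence data of (1) into an isomorphism $\psi$ of the semidirect‑product groupoids satisfying (2), and conversely to read that data back off such a $\psi$, by means of two elementary factorizations in $\Gamma\ltimes G$. Identifying $G_i^{(0)}$ with the unit space $\{e_i\}\times G_i^{(0)}$ of $\Gamma_i\ltimes G_i$, the multiplication and inversion formulas recorded just before Definition \ref{groupoid oe} give $r((\gamma,g))=(e,\gamma r(g))$, $s((\gamma,g))=(e,s(g))$, and
\begin{align*}
(\gamma,g)=(\gamma,r(g))\,(e,g),\qquad (e,\gamma g)=(\gamma,g)\,(\gamma^{-1},\gamma s(g)).
\end{align*}
In particular $g\mapsto(e_i,g)$ is a groupoid embedding of $G_i$ onto $\{e_i\}\times G_i$, and each $(\gamma_i,x)$ with $x\in G_i^{(0)}$ is an arrow $(e_i,x)\to(e_i,\gamma_i x)$.

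For $(1)\Rightarrow(2)$, given $\phi,a,b$ as in Definition \ref{groupoid oe} I would set $\psi(\gamma_1,g_1):=\bigl(a(\gamma_1,r(g_1)),\,\phi(g_1)\bigr)$. Composability of $(\gamma_1,g_1)(\gamma_1',g_1')$ in $\Gamma_1\ltimes G_1$ unwinds to $s(g_1)=\gamma_1'r(g_1')$, and under this hypothesis the identity $\psi\bigl((\gamma_1,g_1)(\gamma_1',g_1')\bigr)=\psi(\gamma_1,g_1)\psi(\gamma_1',g_1')$ reduces, coordinate by coordinate, to parts (1), (2) and (3) of Lemma \ref{topologcally free lemma} together with the multiplicativity of $\phi$ and the relation $\phi(\gamma_1 g_1)=a(\gamma_1,r(g_1))\phi(g_1)$; the reversal of order in $(\gamma_1'^{-1}g_1)g_1'$ is exactly what makes the first coordinate line up with the cocycle identity of Lemma \ref{topologcally free lemma}(2). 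Next, Lemma \ref{topologcally free lemma}(2) applied with $\gamma_1=\gamma_1'=e_1$ and $g_1=g_1'$ a unit forces $a(e_1,\cdot)\equiv e_2$, whence $\psi(\{e_1\}\times G_1)=\{e_2\}\times G_2$, while $\psi(\Gamma_1\times G_1^{(0)})\subseteq\Gamma_2\times G_2^{(0)}$ is immediate since $\phi$ preserves units. Finally $(\gamma_2,g_2)\mapsto\bigl(b(\gamma_2,r(g_2)),\,\phi^{-1}(g_2)\bigr)$ is a two‑sided inverse for $\psi$ by Lemma \ref{topologcally free lemma}(4), so $\psi$ is bijective and $\psi(\Gamma_1\times G_1^{(0)})=\Gamma_2\times G_2^{(0)}$, and $\psi,\psi^{-1}$ are continuous because $\Gamma_i$ is discrete and $a,b,\phi,\phi^{-1}$ are continuous; hence $\psi$ is a topological groupoid isomorphism of the required form.

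For $(2)\Rightarrow(1)$, from $\psi$ I would define $\phi:G_1\to G_2$ by $\psi(e_1,g_1)=(e_2,\phi(g_1))$; since $\psi$ restricts to an isomorphism $\{e_1\}\times G_1\to\{e_2\}\times G_2$ and $g\mapsto(e_i,g)$ is a groupoid embedding, $\phi$ is a groupoid isomorphism. For $(\gamma_1,x)\in\Gamma_1\times G_1^{(0)}$ write $\psi(\gamma_1,x)=(a(\gamma_1,x),c(\gamma_1,x))\in\Gamma_2\times G_2^{(0)}$; applying $s$ and using $s(\psi(\gamma_1,x))=\psi(e_1,x)=(e_2,\phi(x))$ forces $c(\gamma_1,x)=\phi(x)$, and then the first factorization yields $\psi(\gamma_1,g_1)=\bigl(a(\gamma_1,r(g_1)),\,\phi(g_1)\bigr)$ for every $g_1$. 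Applying $\psi$ to the second factorization $(e_1,\gamma_1 g_1)=(\gamma_1,g_1)(\gamma_1^{-1},\gamma_1 s(g_1))$ and comparing coordinates gives first $a(\gamma_1,r(g_1))\,a(\gamma_1^{-1},\gamma_1 s(g_1))=e_2$, and then, since composability of the image product forces its trailing source unit to equal $\phi(\gamma_1 s(g_1))$, the relation $\phi(\gamma_1 g_1)=a(\gamma_1,r(g_1))\phi(g_1)$ (this also forces $a(\gamma_1,r(g_1))=a(\gamma_1,s(g_1))$). Running the same argument for $\psi^{-1}$ produces $b$ with $\phi^{-1}(\gamma_2 g_2)=b(\gamma_2,r(g_2))\phi^{-1}(g_2)$; both $a$ and $b$ are continuous, being components of $\psi$ and $\psi^{-1}$ with $\Gamma_i$ discrete. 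Thus the two actions are continuous $r$‑orbit equivalent, and by Lemma \ref{topologcally free lemma}(1) they are continuous orbit equivalent with the same cocycle maps.

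The work is essentially bookkeeping; the one point that needs care is checking, at each invocation of Lemma \ref{topologcally free lemma}, that the relevant product in the semidirect product is genuinely composable and that the correct instance of the cocycle identities is used — in particular the matching of the order reversal $(\gamma'^{-1}g)g'$ in the semidirect‑product multiplication with the multiplication order in Lemma \ref{topologcally free lemma}(2), and the fact that a priori $a(\gamma,\cdot)$ records the value at only one of $r(g)$, $s(g)$ until Lemma \ref{topologcally free lemma}(1) is brought in. I do not anticipate any conceptual difficulty beyond this.
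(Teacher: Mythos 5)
Your proposal is correct and follows essentially the same route as the paper: both directions construct/recover $\psi(\gamma_1,g_1)=\bigl(a(\gamma_1,r(g_1)),\phi(g_1)\bigr)$ and rest on Lemma \ref{topologcally free lemma}, with your factorization $(e_1,\gamma_1 g_1)=(\gamma_1,g_1)(\gamma_1^{-1},\gamma_1 s(g_1))$ playing the same role as the paper's use of $\psi(\gamma_1,g_1^{-1})^{-1}$. Your extra checks (e.g.\ $a(e_1,\cdot)\equiv e_2$ and continuity of $\psi$, $a$, $b$) only make explicit details the paper leaves implicit.
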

	\begin{proof}
		$(1) \Rightarrow (2)$: Let $\phi : G_1 \rightarrow G_2$ be the isomorphism given by orbit equivalence. We claim that the maps $\psi: \Gamma_1 \ltimes G_1 \rightarrow \Gamma_2 \ltimes G_2; \ (\gamma_1,g_1)\mapsto (a(\gamma_1,r(g_1)), \phi(g_1))$, and  $\chi: \Gamma_2 \ltimes G_2 \rightarrow \Gamma_1 \ltimes G_1;\ (\eta_2,h_2)\mapsto (b(\eta_2,r(h_2)),\phi^{-1}(h_2))$, are groupoid homomorphisms. 
		Let $((\gamma_1,g_1),(\gamma_2,g_2)) \in (\Gamma_1 \ltimes G_1)^{(2)}$, then $(\gamma_2^{-1}g_1,g_2) \in G_1^{(2)}$,  $r(g_2) = s(\gamma_2^{-1}g_1) = \gamma_2^{-1}s(g_1)$, and $\gamma_2 r(g_2) = s(g_1)$. For $\gamma \in \Gamma_1$,
		\[
			a(\gamma,\gamma_2 r(g_2^{-1})) = a(\gamma,\gamma_2 s(g_2)) = a(\gamma,\gamma_2 r(g_2)) = a(\gamma,s(g_1)) = a(\gamma,r(g_1)).
		\]
		Using this and Lemma \ref{topologcally free lemma}, we get,  
		\begin{align*}
			\psi(\gamma_1,g_1)\psi(\gamma_2,g_2) &=
			(a(\gamma_1,r(g_1)), \phi(g_1))(a(\gamma_2,r(g_2)) ,\phi(g_2)) \\&= (a(\gamma_1,r(g_1))a(\gamma_2,r(g_2)),(a(\gamma_2,r(g_2))^{-1} \phi(g_1))\phi(g_2)) \\&=
			(a(\gamma_1\gamma_2, r((\gamma_2^{-1}g_1)g_2)), (a(\gamma_2^{-1}, r(\gamma_2 g_2^{-1}))\phi(g_1))\phi(g_2)) \\&=
			(a(\gamma_1\gamma_2, r((\gamma_2^{-1}g_1)g_2)),(\phi(\gamma_2^{-1}g_1)) \phi(g_2)) \\&=
			\psi (\gamma_1 \gamma_2, (\gamma_2^{-1}g_1)g_2) = \psi((\gamma_1,g_1)(\gamma_2,g_2)),
		\end{align*}
		for $\gamma_1, \gamma_2\in \Gamma_1, g_1, g_2\in G_1$, thus, $\psi$ is a groupoid homomorphism. The proof for  $\chi$ is  similar. By Lemma \ref{topologcally free lemma},
		these maps are inverse of each other, and  $\psi$ satisfies the required conditions.
		
		$(2) \Rightarrow (1)$: Let $\psi : \Gamma_1 \ltimes G_1 \rightarrow \Gamma_2 \ltimes G_2$ and $\phi : G_1 \rightarrow G_2$ be  isomorphisms such that $\psi(\Gamma_1,G_1^{(0)}) = (\Gamma_2,G_2^{(0)})$ and $\psi(e_1,g_1) = (e_2,\phi(g_1))$, for $g_1\in G_1$. Let $a: \Gamma_1 \times G_1^{(0)} \rightarrow \Gamma_2$ map $(\gamma_1,r(g_1))$ to the first component of $\psi(\gamma_1,r(g_1))$ and   $b: \Gamma_2 \times G_2^{(0)} \rightarrow \Gamma_1$ map $(\gamma_2,r(g_2))$ to the first component of $\psi^{-1}(\gamma_2,r(g_2))$. By assumption, if $\psi(\gamma_1,r(g_1)) = (a(\gamma_1,r(g_1)),g_2)$, then $g_2 \in G_2^{(0)}$. Also, $
			\psi (s(\gamma_1,r(g_1))) = s(a(\gamma_1,r(g_1)),g_2)$ implies $\psi (e_1,r(g_1)) = (e_2,g_2)
$, and $g_2 = \phi(r(g_1))$. This means that, $\psi(\gamma_1,r(g_1)) = (a(\gamma_1,r(g_1)) , \phi(r(g_1))). $
		Furthermore, 
		\begin{align*}
			\psi(\gamma_1,g_1) &= \psi(\gamma_1,r(g_1)) \psi(e_1,g_1) =(a(\gamma_1,r(g_1)),\phi(r(g_1))) (e_1,\phi(g_1)) \\&= (a(\gamma_1,r(g_1)), \phi(g_1)),
		\end{align*}
and
		\begin{align*}
		(a(\gamma_1^{-1},\gamma_1 r(g_1)), \phi(\gamma_1 g_1))&=\psi(\gamma_1,g_1^{-1})^{-1} = (a(\gamma_1,r(g_1^{-1})),\phi(g_1^{-1}))^{-1} \\&= (a(\gamma_1,r(g_1^{-1}))^{-1}, a(\gamma_1,r(g_1^{-1})) \phi(g_1)).
		\end{align*}
		Thus, $ a(\gamma_1,r(g_1)) \phi(g_1) = a(\gamma_1,s(g_1)) \phi(g_1) = \phi(\gamma_1 g_1),$ for  $\gamma_1 \in \Gamma_1$ and $g_1 \in G_1$.
		Similarly, $
		b(\gamma_2,r(g_2)) \phi^{-1}(g_2) = \phi^{-1}(\gamma_2 g_2),$
		for  $\gamma_2 \in \Gamma_2$ and $g_2 \in G_2$.
		\end{proof}

	\begin{corollary}
		Let $\Gamma_1 \curvearrowright G_1 \sim_{\rm coe} \Gamma_2 \curvearrowright G_2$ be topologically free actions then $C^*_r(\Gamma_1 \ltimes G_1 ) \cong C^*_r(\Gamma_2 \ltimes G_2)$.
	\end{corollary}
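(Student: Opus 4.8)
The plan is to reduce everything to Theorem \ref{theorem-orbit-equivalence-groupoid-products} together with the standard fact that an isomorphism of (locally compact, Hausdorff, second countable) \'etale topological groupoids induces an isomorphism of the associated reduced groupoid C$^*$-algebras. Since $\Gamma_1 \curvearrowright G_1 \sim_{\rm coe} \Gamma_2 \curvearrowright G_2$ and both actions are topologically free, that theorem provides a topological groupoid isomorphism $\psi : \Gamma_1 \ltimes G_1 \to \Gamma_2 \ltimes G_2$; for the present statement I would discard the extra normalization properties of $\psi$ and only use that it is an algebraic isomorphism which is a homeomorphism. First I would record that each $\Gamma_i \ltimes G_i = \Gamma_i \times G_i$ is again locally compact, Hausdorff, second countable and \'etale (as $\Gamma_i$ is countable discrete and $G_i$ is \'etale), carrying the canonical Haar system given by counting measures on the $r$-fibers.

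Next I would build the map at the C$^*$-level. Define $\Psi_0 : C_c(\Gamma_1 \ltimes G_1) \to C_c(\Gamma_2 \ltimes G_2)$ by $\Psi_0(f) = f \circ \psi^{-1}$. Because $\psi$ is a homeomorphism intertwining the multiplications, inversions, unit spaces, and (being an \'etale groupoid isomorphism) the counting Haar systems, a direct check shows $\Psi_0$ is a $*$-isomorphism of the convolution $*$-algebras. To pass to completions, for a unit $u \in (\Gamma_1 \ltimes G_1)^{(0)}$ let $\lambda_u$ denote the regular representation on $\ell^2$ of the $r$-fiber over $u$; then $\psi$ restricts to a bijection of this fiber onto the $r$-fiber of $\Gamma_2 \ltimes G_2$ over $\psi(u)$, whence $\lambda_u \circ \Psi_0^{-1}$ is unitarily equivalent to $\lambda_{\psi(u)}$. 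Taking suprema over units on both sides gives $\|\Psi_0(f)\|_r = \|f\|_r$, so $\Psi_0$ extends to an isomorphism $\theta : C^*_r(\Gamma_1 \ltimes G_1) \to C^*_r(\Gamma_2 \ltimes G_2)$.

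I expect no genuine obstacle here: the only delicate point is compatibility of $\psi$ with the \emph{reduced} norms, and for \'etale groupoids this is automatic, since the Haar system is the canonical counting system (preserved by any topological groupoid isomorphism) and the regular representations are indexed by units, a labelling transported verbatim by $\psi$. Alternatively one may simply invoke functoriality of $C^*_r(-)$ under isomorphisms of second countable, locally compact, Hausdorff, \'etale groupoids and conclude in one line; the argument above is just the unwinding of that citation.
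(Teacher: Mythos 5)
Your proposal is correct and follows essentially the same route as the paper: the paper treats this corollary as an immediate consequence of Theorem \ref{theorem-orbit-equivalence-groupoid-products} together with the standard fact that an isomorphism of \'{e}tale groupoids induces an isomorphism of the reduced C$^*$-algebras, and indeed the explicit lifting $f \mapsto f \circ \psi^{-1}$ with the reduced-norm check via the regular representations is exactly what the paper carries out later in the proof of Theorem \ref{theorem-1}, $(2) \Rightarrow (3)$ (in the twisted setting). Nothing further is needed.
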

	 
	Let $G$ be a groupoid and for $u,v \in G^{(0)}$, put $G_u := s^{-1}(u)$, $G^u := r^{-1}(u)$ and $G_u^v := G_u \cap G^v$. Then $G$ is called principal if for each $u \in G^{(0)}$, $G_u^u := \{u\}$. When $G$ is a topological groupoid, it is called topologically principal if the set of units $u \in G^{(0)}$ with $G_u^u = \{u\}$ is dense in $G^{(0)}$. By a bisection we mean a subgroupoid $S$ of groupoid $G$ with restrictions $r|_S$ and $s|_S$ being injective.

	Twist over groupoids are first introduced by Alex Kumjian \cite{kumjian}. Here we recall the definition of a twisted groupoid and thier reduced C$^*$-algebras.
	\begin{definition}{(\cite{sims}, Definition 11.1.1)}\label{def-twisted-groupoid}
		Let $G$ be a locally compact, Hausdorff, second countable, \'{e}tale groupoid, and $\Sigma$ be a locally compact, Hausdorff groupoid and $\mathbb{T} \times G^{(0)}$ be a trivial group bundle with fibers $\mathbb{T}$. Then $(G,\Sigma)$ is a twisted groupoid if there is a sequence 
		\[
		\mathbb{T} \times G^{(0)}  \overset{\iota}{\rightarrowtail} \Sigma \overset{\pi}{\twoheadrightarrow} G
		\]
		such that the following conditions are satisfied:
		\begin{enumerate}
			\item $\iota$ and $\pi$ are groupoid morphisms with $\iota$ one-to-one and $q$ onto,
			\item $\iota$ and $\pi$  by reduction respectively define homeomorphisms $\{1\}\times G^{(0)} \cong \Sigma^{(0)}$ and $\Sigma^{(0)} \cong G^{(0)}$,
			\item For  $\sigma \in \Sigma$ and $t \in \mathbb{T}$, $\iota(t,r(\sigma))\sigma = \sigma \iota(t,s(\sigma))$, 
			\item $\pi^{-1} (u)  = \iota (\mathbb{T} \times \{u\})$ for  $u \in G^{(0)}$.
		\end{enumerate}
		The twist is said to be locally trivial if in addition,
		\begin{enumerate}
			\setcounter{enumi}{4}
			\item For each $g \in G$ there is an open bisection $U_g$ containing   $g$ and  a continuous map $\phi_g : U_g \rightarrow \Sigma$ with $\pi \circ \phi_g = id|_{U_g}$, making $ (t,u) \mapsto \iota(t,r(u))\phi_g(u)$ a homeomorphism between $\mathbb{T} \times U$ and $\pi^{-1}(U)$.
		\end{enumerate}
	\end{definition}

	The particular case
	$
	\mathbb{T} \times G^{(0)}  \rightarrowtail \mathbb{T} \times G \twoheadrightarrow G
$, gives the trivial twist. 
		A twisted groupoid is called topologically principal if $G$ is so.
In this case, there exist an action of the unit circle $\mathbb{T}$ on $\Sigma$ such that when $\pi(\sigma_1) = \pi(\sigma_2)$, there is a unique $t \in \mathbb{T}$ with $t \sigma_1 = \sigma_2$ \cite[11.1.2 and 11.1.3]{sims}.

	Let $G$ be an \'{e}tale groupoid with Haar system of counting measures and $(G,\Sigma)$ be a twisted groupoid with corresponding maps $\iota$ and $\pi$. We have the $*$-algebra
	\[
	C_c(G,\Sigma) := \{ f \in C_c(\Sigma,\mathbb{C}) : f(t\sigma) = \bar{t}f(\sigma) ,\text{ for all } \sigma \in \Sigma, t \in \mathbb{T} \}
	\]
under the operations 
$
f*h(\sigma) = \sum_{\dot{\tau} \in G_{s(\sigma)}}  f(\sigma \tau^{-1}) h(\tau), \ \  f^*(\sigma) = \overline{f(\sigma^{-1})},$
For $u \in G^{(0)}$,  on the Hilbert space 
\[
\mathscr{H}_u := \{ \xi : \Sigma_u \rightarrow \mathbb{C} : \xi(t \sigma) = \bar{t} \xi(\sigma) , \sum_{\dot{\sigma} \in G_u} |\xi(\sigma)|^2 < \infty \}
\]
we define $*$-representation ${\rm Ind}\delta_u$ of $C_c(G,\Sigma)$ on $\mathscr{H}_u$ by
\[
{\rm Ind}\delta_u(f)\xi(\sigma) := \sum_{\tau \in \Sigma_u} f(\sigma \tau^{-1}) \xi(\tau),\ \ (\sigma\in \Sigma_u, \xi\in \mathscr{H}_u, f\in C_c(G,\Sigma)),
\]
and define the reduced twisted groupoid $C^*$-algebra $C^*_r(G,\Sigma)$ as the completion of $C_c(G,\Sigma)$ in the reduced norm $\| f \|_r := \text{sup}_{u \in G^{(0)}} \| {\rm Ind}\delta_u(f) \|$. Replacing sums with integrals with respect to a Haar system and a quasi-invariant measure on $G^{(0)}$, the same construction works when $G$ is not \'{e}tale \cite{renault-1}. 

		By an isomorphism $(\phi_0, \phi_1, \phi_2)$ between twisted groupoids $(G_1,\Sigma_1)$ and $(G_2,\Sigma_2)$ we mean a commutative diagram,
	\[
	\xymatrix{
		\mathbb{T} \times G^{(0)}_1  \ar[d]_-{\phi_0}  \ar[r]& \Sigma_1 \ar[d]_-{\phi_1} \ar[r] & G_1 \ar[d]^-{\phi_2}\\
		\mathbb{T} \times G^{(0)}_2  \ar[r] & \Sigma_2 \ar[r] & G_2
	}
	\]
	where $\phi_0$, $\phi_1$, and $\phi_2$ are isomorphisms. We identify $\mathbb{T} \times G_1^{(0)}$ with $\mathbb{T} \times G_2^{(0)}$ and  write $(\phi_2,\phi_1) : (G_1,\Sigma_1) \rightarrow (G_2,\Sigma_2)$ for this isomorphism. In this case, we simply say that $\phi_1$ is $\mathbb T$-equivariant.  
		
		By an action of a group $\Gamma$ on a twisted groupoid $(G,\Sigma)$ we mean two actions $\Gamma \curvearrowright \Sigma$ and $\Gamma \curvearrowright G$ with the following equivariance conditions,
		\begin{align*} \label{equivariant-condition}
			\iota(t ,\gamma u) = \gamma  \iota(t, u), \ \ \pi(\gamma  \sigma) = \gamma  \pi (\sigma),\ \ (\gamma \in \Gamma , t \in \mathbb{T}, \sigma \in \Sigma, u \in G^{(0)}). 
		\end{align*}

An action of $\Gamma$ on $(G,\Sigma)$ is called topologically free if the corresponding restricted action $\Gamma \curvearrowright G^{(0)}$ is topologically free in the classical sense. 

\begin{definition}\label{def-coe-twisted}
	Two actions $\Gamma_1 \curvearrowright (G_1,\Sigma_1)$ and $\Gamma_2 \curvearrowright (G_2,\Sigma_2)$ are called $r$-continuous ($s$-continuous) orbit equivalent  if there exists a $\mathbb{T}$-equivariant isomorphism $\phi : \Sigma_1 \rightarrow \Sigma_2$ such that the induced actions $\Gamma_1 \curvearrowright \Sigma_1$ and $\Gamma_2 \curvearrowright \Sigma_2$ are $r$-continuous ($s$-continuous) orbit equivalent  with respect to $\phi$. When the actions are both $r$-continuous  and $s$-continuous orbit equivalent, with the same isomorphism and cocyle maps, the actions are called continuous orbit equivalent, writing $\Gamma_1 \curvearrowright (G_1,\Sigma_1) \sim_{\rm coe} \Gamma_2 \curvearrowright (G_2,\Sigma_2)$. Finally, when $\Gamma_1 \curvearrowright \Sigma_1 \sim_{\rm con} \Gamma_2 \curvearrowright \Sigma_2$ with respect to a $\mathbb T$-equivariant isomorphism $\phi$, we say that the actions are conjugate.
\end{definition}

	\begin{proposition}\label{proposition-orbit-equivlence-twist}
		Given topologically free actions $\Gamma_i \curvearrowright (G_i,\Sigma_i)$, $i = 1,2$, such that there exists an isomorphism $(\phi',\phi) : (G_1,\Sigma_1) \rightarrow (G_2,\Sigma_2)$, consider following statements:
		\begin{enumerate}
			\item 
			
			$\Gamma_1 \curvearrowright (G_1,\Sigma_1) \sim_{\rm coe} \Gamma_2 \curvearrowright (G_2,\Sigma_2)$ with respect to $\phi$,

			\item $\Gamma_1 \curvearrowright G_1 \sim_{\rm coe} \Gamma_2 \curvearrowright G_2.$
		\end{enumerate}
		 Then $(1)$ implies $(2)$.
	\end{proposition}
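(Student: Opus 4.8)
The plan is essentially a diagram chase built on the map $\pi_i\colon\Sigma_i\to G_i$, together with the canonical identification $\Sigma_i^{(0)}\cong G_i^{(0)}$ afforded by axiom (2) of Definition~\ref{def-twisted-groupoid}. First I would record the two facts that make the argument run: each $\pi_i$ is a surjective groupoid morphism satisfying $\phi'\circ\pi_1=\pi_2\circ\phi$ (commutativity of the defining square of the isomorphism $(\phi',\phi)$), and it intertwines the $\Gamma_i$-actions, $\pi_i(\gamma\sigma)=\gamma\,\pi_i(\sigma)$; moreover the homeomorphism $\Sigma_i^{(0)}\cong G_i^{(0)}$ is $\Gamma_i$-equivariant, since $\iota_i(t,\gamma u)=\gamma\,\iota_i(t,u)$.

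Next I would unwind hypothesis~(1): it provides the $\mathbb T$-equivariant isomorphism $\phi\colon\Sigma_1\to\Sigma_2$ (the second component of $(\phi',\phi)$) and continuous cocycle maps $a\colon\Gamma_1\times\Sigma_1^{(0)}\to\Gamma_2$, $b\colon\Gamma_2\times\Sigma_2^{(0)}\to\Gamma_1$ with $\phi(\gamma_1\sigma_1)=a(\gamma_1,r(\sigma_1))\phi(\sigma_1)=a(\gamma_1,s(\sigma_1))\phi(\sigma_1)$, and the analogous relations for $\phi^{-1}$ and $b$. Transporting $a$ and $b$ along the homeomorphisms $\Sigma_i^{(0)}\cong G_i^{(0)}$ yields continuous maps $\tilde a\colon\Gamma_1\times G_1^{(0)}\to\Gamma_2$ and $\tilde b\colon\Gamma_2\times G_2^{(0)}\to\Gamma_1$; these, together with the groupoid isomorphism $\phi'\colon G_1\to G_2$, are the candidate data witnessing~(2).

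It then remains to verify the orbit-equivalence relations on $G_i$. Given $\gamma_1\in\Gamma_1$ and $g_1\in G_1$, pick $\sigma_1\in\Sigma_1$ with $\pi_1(\sigma_1)=g_1$ and compute, using in turn $\Gamma$-equivariance of $\pi$, the square $\phi'\circ\pi_1=\pi_2\circ\phi$, the $r$-relation for $\phi$, $\Gamma_2$-equivariance of $\pi_2$, and the square again,
\[
\phi'(\gamma_1 g_1)=\pi_2\bigl(\phi(\gamma_1\sigma_1)\bigr)=\pi_2\bigl(a(\gamma_1,r(\sigma_1))\phi(\sigma_1)\bigr)=a(\gamma_1,r(\sigma_1))\,\phi'(g_1).
\]
Since $\pi_1$ is a morphism, $\pi_1(r(\sigma_1))=r(g_1)$, so under the identification $a(\gamma_1,r(\sigma_1))=\tilde a(\gamma_1,r(g_1))$, giving $\phi'(\gamma_1 g_1)=\tilde a(\gamma_1,r(g_1))\phi'(g_1)$. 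Running the same computation with $s$ in place of $r$ (legitimate because~(1) furnishes both the $r$- and the $s$-relation with the same $a$), and symmetrically for $\phi^{-1}$, whose descent is $\phi'^{-1}$, with $\tilde b$, shows that $\phi'$, $\tilde a$, $\tilde b$ witness $\Gamma_1\curvearrowright G_1\sim_{\rm coe}\Gamma_2\curvearrowright G_2$.

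I do not expect a genuine obstacle here; the only point requiring care is the bookkeeping of the identification $\Sigma_i^{(0)}\cong G_i^{(0)}$ — checking it is $\Gamma_i$-equivariant, so that topological freeness and the orbit-equivalence relations are compared on the same data, and that the cocycle maps, a priori defined only on $\Sigma_i^{(0)}$, descend unambiguously because $\pi_i$ restricts to a homeomorphism on units. Everything else is routine.
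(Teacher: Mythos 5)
Your proposal is correct, and it takes a genuinely different route from the paper's. You argue directly at the level of the extension $\mathbb{T}\times G_i^{(0)}\rightarrowtail\Sigma_i\twoheadrightarrow G_i$: lift $g_1$ to $\sigma_1$, use the $\Gamma$-equivariance of $\pi_i$, the commuting square $\phi'\circ\pi_1=\pi_2\circ\phi$, and the $r$- and $s$-relations on $\Sigma_1$, and then push the cocycles down through the unit-space homeomorphism $\Sigma_i^{(0)}\cong G_i^{(0)}$ (your remark that this descent is unambiguous and continuous because $\pi_i$ restricts to an equivariant homeomorphism on units is exactly the point that needs saying). The paper instead routes everything through the semidirect products: it applies Theorem \ref{theorem-orbit-equivalence-groupoid-products} to the coe of the actions on $\Sigma_i$ to obtain $\psi:\Gamma_1\ltimes\Sigma_1\to\Gamma_2\ltimes\Sigma_2$, checks $\mathbb{T}$-equivariance so that $\psi$ descends to $\psi':\Gamma_1\ltimes G_1\to\Gamma_2\ltimes G_2$ preserving $(e,G)$ and $(\Gamma,G^{(0)})$, and then invokes the converse direction of that theorem to read off statement (2). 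Your diagram chase is shorter and in fact never uses topological freeness (you only need the product identities $\phi(\gamma\sigma)=a(\gamma,r(\sigma))\phi(\sigma)=a(\gamma,s(\sigma))\phi(\sigma)$, which are part of the definition of coe, not the pointwise cocycle identities of Lemma \ref{topologcally free lemma}), so it is slightly more general; what the paper's heavier route buys is the explicit twisted semidirect-product isomorphism $(\psi',\psi)$ with its invariance properties, which is precisely the structure reused in the equivalence $(1)\Leftrightarrow(2)$ of Theorem \ref{theorem-1}.
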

	\begin{proof}
		For $i = 1,2$, let $(\iota_i,\pi_i)$ be the twist map of $(G_i,\Sigma_i)$, and consider the twist $\mathbb{T} \times G_i^{(0)} \overset{(c_{\Gamma_i},\iota_i) }{\longrightarrow} \Gamma_i \ltimes \Sigma_i \overset{(id_i,\pi_i)}{\longrightarrow} \Gamma_i \ltimes G_i$, where $c_{\Gamma_i}$ is the constant map to the identity element of $\Gamma_i$. By the definition of twisted isomorphism $(\phi',\phi)$ and equality $(\gamma_1,\sigma_1) = (\gamma_1 , r(\sigma_1)) (e_1, \sigma_1)$, we get the equivariance condition  $\psi(\gamma_1 , t\sigma_1) = t \psi(\gamma_1 , \sigma_1)$. Thus, $\psi$ induces $\psi' : \Gamma_1 \ltimes G_1 \rightarrow \Gamma_2 \ltimes G_2$ with $(\psi',\psi)$ an isomorphism of twisted groupoids. For $g_1 \in G_1$, $\sigma_1 \in \Sigma_1$ with  $\pi_1(\sigma_1) = g_1$, $\psi'(e_1,g_1) = \psi(e_1,\pi_2(\phi(\sigma_1)))$, that is,  $\psi'(e_1,G_1) \subseteq (e_2,G_2)$, and by the same argument, $\psi'(e_1,G_1) = (e_2,G_2)$. Also, for $u \in G_1^{(0)},$
		\begin{align*}
			\psi'(\gamma_1,u) & \in \psi' \circ (id_1,\pi_1)(\gamma_1, \pi_1^{-1}(u)) =  (id_2,\pi_2) \circ \psi (\gamma_1, \pi_1^{-1}(u)) \\ &\subseteq (id_2,\pi_2) (\Gamma_2 , \mathbb{T} \times \Sigma_2^{(0)}) = (\Gamma_2, \Sigma_2^{(0)}),
		\end{align*}
		thus $\psi'(\Gamma_1, \Sigma_1^{(0)}) \subset (\Gamma_2, \Sigma_2^{(0)}), $ and by the same argument, $\psi'(\Gamma_1, \Sigma_1^{(0)}) = (\Gamma_2, \Sigma_2^{(0)}) $, which completes the proof by Theorem \ref{theorem-orbit-equivalence-groupoid-products}
	\end{proof}

		In the notation  of the above proposition, $\Gamma_1 \curvearrowright \Sigma_1 \sim_{\rm coe} \Gamma_2 \curvearrowright \Sigma_2$ with respect to $\phi$ implies $\Gamma_1 \curvearrowright G_1 \sim_{\rm coe} \Gamma_2 \curvearrowright G_2$ with respect to $\phi'$.

		A sub-C$^*$-algebra $B$ of C$^*$-algebra $A$ is called a Cartan subalgebra if,
		\begin{enumerate}
			\item $B$ is a maximal abelian subalgebra of $A$,
			\item $B$ is regular, i.e., $N_B(A) := \{ n \in A: nBn^* \subseteq B \text{ and } n^*Bn \subseteq B \}$ generates $A$ as a C$^*$-algebra,
			\item there exists a faithful conditional expectation $P : A \rightarrow B$.
		\end{enumerate}
		The pair $(A,B)$ is called a Cartan pair. In this case, the existence of an approximate identity of $A$ in $B$ is  automatic  \cite[Theorem 2.6]{pitts-1}.

	We say that two Cartan pairs $(A_1, B_1), (A_2, B_2)$ are said to be isomorphic, writing $(A_1, B_1)\simeq (A_2, B_2)$, if there is a $C^*$-algebra isomorphism from $A_1$ from $A_2$ mapping $B_1$ onto $B_2$.  There is a correspondence between separable C$^*$-algebras containing a Cartan subalgebra and twisted groupoid C$^*$-algebras, given by Renault in  the following result \cite[Theorem 5.9]{renault-1}.  
	\begin{proposition}\label{Cartan-groupoid}
		For a Cartan pair $(A,B)$ with $A$ separable, there exists a twisted  Hausdorff, locally compact, second countable, topologically principal, \'{e}tale groupoid  $(G,\Sigma)$ with $(A,B) \cong (C^*_r(G,\Sigma),C_0(G^{(0)}))$.
	\end{proposition}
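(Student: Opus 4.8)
The plan is to reconstruct Renault's Weyl twisted groupoid directly from the pair $(A,B)$. First I would identify $B$ with $C_0(X)$, where $X:=\widehat{B}$ is the Gelfand spectrum; since $A$ is separable so is $B$, hence $X$ is locally compact, Hausdorff and second countable. For a normalizer $n\in N_B(A)$ one has $n^*n,nn^*\in B$ (they are limits of $n^*e_\lambda n$, $ne_\lambda n^*$ along an approximate unit of $B$, which exists by the remark following the definition of a Cartan pair), and a standard computation produces a homeomorphism $\alpha_n$ from the open set $\{x\in X:(n^*n)(x)\neq0\}$ onto $\{x\in X:(nn^*)(x)\neq0\}$ implementing the partial action of $n$ on $B$. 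One then takes the Weyl groupoid $G$ to be the set of germs $[n,x]$ of these partial homeomorphisms (so $[n,x]=[m,y]$ iff $x=y$ and $\alpha_n=\alpha_m$ near $x$), with $s[n,x]=x$, $r[n,x]=\alpha_n(x)$, and partial multiplication and inversion induced by composition; declaring each $\{[n,x]:x\in\mathrm{dom}\,\alpha_n\}$ to be a basic open bisection makes $G$ a locally compact Hausdorff \'etale groupoid with $G^{(0)}=X$.

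Next I would build the twist $\Sigma$ over $G$ from a finer equivalence on the same data: $(n,x)$ and $(m,x)$ are to determine the same point of $\Sigma$ when there is $b\in B$ with $b(x)>0$ and $nb=mb$; the inclusion $\iota\colon\mathbb{T}\times X\to\Sigma$ sends $(t,x)$ to the class of an appropriate scalar multiple of a unit normalizer, $\pi\colon\Sigma\to G$ forgets the extra identification, and $\mathbb{T}$ acts by $t\cdot[n,x]=[tn,x]$. I would then verify directly that $\mathbb{T}\times X\overset{\iota}{\rightarrowtail}\Sigma\overset{\pi}{\twoheadrightarrow}G$ is locally compact, Hausdorff, and satisfies conditions (1)--(4) of Definition \ref{def-twisted-groupoid}. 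Topological principality of $(G,\Sigma)$, i.e.\ density in $X$ of the units with trivial isotropy, is exactly where maximality of $B$ is used: a Weyl germ that lies in the isotropy without being a unit is represented, after localizing near the relevant point, by a normalizer commuting with all of $B$ yet not lying in $B$, so if such germs existed over a nonempty open subset of $X$ one could enlarge $B$ to a strictly larger abelian subalgebra, contradicting maximality.

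It remains to produce an isomorphism $(A,B)\cong(C^*_r(G,\Sigma),C_0(G^{(0)}))$. To each $n\in N_B(A)$ I would assign a function $\widehat{n}\in C_c(G,\Sigma)$ supported on the open bisection attached to $n$, with modulus read off from $n^*n$ and phase governed by the identification defining $\Sigma$, and check on the nose that $\widehat{n}\ast\widehat{m}=\widehat{nm}$, $\widehat{n}^*=\widehat{n^*}$, and $\widehat{b}=b$ on $G^{(0)}$ for $b\in B$. Since $N_B(A)$ generates $A$, this assignment extends to a $*$-homomorphism $\Phi\colon A\to C^*_r(G,\Sigma)$ whose range contains $C_c(G,\Sigma)$ --- by a partition of unity over its compact support every element of $C_c(G,\Sigma)$ is a finite sum of terms $b\cdot\widehat{n}$ --- hence has dense range. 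Finally, the canonical faithful conditional expectation on $C^*_r(G,\Sigma)$ given by restriction to the unit space satisfies $(\text{restriction})\circ\Phi=\Phi\circ P$; combined with faithfulness of $P$ and the standard uniqueness of the conditional expectation onto a Cartan subalgebra, this forces $\Phi$ to be isometric, hence a $*$-isomorphism carrying $B$ onto $C_0(G^{(0)})$.

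The hard part is this last isometry, where the abstractly defined reduced norm $\sup_u\|\mathrm{Ind}\,\delta_u\|$ of $C^*_r(G,\Sigma)$ must be matched with the norm of $A$. The clean route is to observe that, for $x\in X$, the GNS representation of $A$ associated with the state $x\circ P$ is unitarily equivalent via $\Phi$ to $\mathrm{Ind}\,\delta_x$, and that the direct sum of these GNS representations is faithful on $A$ precisely because $P$ is faithful; granting this, everything else in the construction is bookkeeping with germs and partitions of unity.
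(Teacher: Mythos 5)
Your overall route is the one the paper itself relies on: Proposition \ref{Cartan-groupoid} is not proved in the paper but quoted from Renault (Theorem 5.9 of \cite{renault-1}), and your sketch reconstructs exactly that argument --- the Weyl groupoid of germs of the partial homeomorphisms $\alpha_n$, a twist built from normalizers, and an expectation-driven identification of $A$ with $C^*_r(G,\Sigma)$. However, two steps as written would fail. First, your equivalence defining $\Sigma$ is too fine: you identify $(n,x)$ with $(m,x)$ only when $nb=mb$ for a \emph{single} $b\in B$ with $b(x)>0$, whereas Renault (and the paper's own recollection of the construction) requires $nb=mb'$ for \emph{two} elements $b,b'\in B$ positive at $x$. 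With your relation, $n$ and $2n$ --- which have the same germ --- are not identified, since that would force $nb=0$ for some $b$ with $b(x)>0$, impossible when $x\in\mathrm{dom}(n)$ because $(nb)^*(nb)(x)=|b(x)|^2\,n^*n(x)>0$. Consequently the fibre of $\pi$ over a point of $G$ is strictly larger than a circle, the $\mathbb{T}$-action is not transitive on fibres, and condition (4) of Definition \ref{def-twisted-groupoid}, $\pi^{-1}(u)=\iota(\mathbb{T}\times\{u\})$, fails: your extension is by germs of positive elements of $B$ times $\mathbb{T}$, not by $\mathbb{T}$.

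Second, passing from the assignment $n\mapsto\widehat{n}$ on normalizers to a $*$-homomorphism $\Phi$ on $A$ is not a formality: a general element of $A$ is a limit of sums of products of normalizers in many different ways, and ``extends because the normalizers generate'' provides neither linearity nor well-definedness --- this is precisely where the real work sits. Renault sidesteps it by defining the map on all of $A$ at once through the expectation (roughly $\widehat{a}([x,n,y])=P(n^*a)(y)/\sqrt{n^*n(y)}$ on the bisection attached to $n$), proving injectivity from faithfulness of $P$, and only then checking compatibility with products; your closing GNS observation (that $\pi_{x\circ P}\circ\Phi$ is equivalent to $\mathrm{Ind}\,\delta_x$ and that $\bigoplus_{x}\pi_{x\circ P}$ is faithful because $P$ is) is the right mechanism for isometry, but it presupposes a well-defined $\Phi$. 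In the same vein, Hausdorffness of $G$ and $\Sigma$ and the implication ``$B$ maximal abelian $\Rightarrow$ $G$ topologically principal'' are genuinely nontrivial: your one-line ``enlarge $B$'' heuristic needs a normalizer whose germ is isotropic on a whole open set, which is exactly what has to be manufactured, so these points cannot simply be declared.
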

	
	The twisted groupoid $(G,\Sigma)$ in the above proposition is called the Weyl twisted groupoid associated to the Cartan pair $(A,B)$. Let us recall the construction of the Weyl twisted groupoid (for details, see \cite{renault-1} and \cite[Remark 2.8]{BL-1}).
		Let $X$ be the spectrum of the abelian C$^*$-algebra $B$. For each normalize $n \in N_A(B)$, by  \cite[Lemma 4.6]{renault-1}, $n^*n$ and $nn^*$ are in $B$. Put, $\text{dom}(n) := \{ x \in X ; n^*n(x) > 0 \}$ and $\text{ran}(n) := \{ x \in X ; nn^*(x) > 0 \}$. By   \cite[Proposition 4.7]{renault-1}, there is a partial homeomorphism $\alpha_n : \text{dom}(n) \rightarrow \text{ran}(n)$ such that 
			$n^* b n(x) = b(\alpha_n(x)) n^*n(x)$,  for  $x \in \text{dom}(n)$ and $b \in B$.
		Now the groupoid $G = G(B)$ is defined by,
		\[
		G := \{ (x, \alpha_n , y): n \in N_A(B), y \in \text{dom}(n) , \alpha_n(y) = x \} / \sim,
		\]
		where $(x, \alpha_n , y) \sim (x', \alpha_{m'} , y')$ iff $y=y'$ and there exist neighbourhood $U \subseteq X$ of $y$ such that $\alpha_n |_U = \alpha_{n'} |_U$. We denote the equivalence class of $(x,\alpha_b,y)$ by $[x,\alpha_n,y]$.  The unit space $G^{(0)}$ could be identified with $\{[x,\alpha_b,x] ; b \in B, x \in $ supp$(b)\}$. The twist  $\Sigma = \Sigma(B)$ is defined by,
		\[
		\Sigma := \{ (x, n , y) \in \text{ran}(n) \times N_A(B) \times \text{dom}(n): \alpha_n(y) = x \} / \approx,
		\]
		where $(x, n , y) \approx (x', n' , y')$ iff $y = y'$ and there exist $b,b' \in B$ with $b(y) , b'(y) > 0$ and $nb = n'b'$. We use the same notation for the equivalence classes. The unit space could be identified with $\{[x,b,x]: b \in B, x \in {\rm dom}(b)\}$. The map $[x,n,y] \mapsto [x,\alpha_n,y]$ yields a central extension, making $(G,\Sigma)$ a twisted groupoid with $(A,B) \cong (C^*_r(G,\Sigma),C_0(G^{(0)}))$. This extension uses the identification of $\mathbb{T} \times X$ with $\mathscr{B} := \{ [x,b,x] : b \in B, \ b(x) \neq 0 \} \subseteq \Sigma(B)$ given by,
		$	[x,b,x] \mapsto (b(x)/|b(x)| , x).
		$

	The next result ensures the uniqueness of twisted groupoid in Proposition \ref{Cartan-groupoid}.
	\begin{proposition}[Proposition 4.15 of \cite{renault-1}]\label{r-iso}
		Let $(G,\Sigma)$ be a twisted  Hausdorff, locally compact, second countable, topologically principal \'{e}tale groupoid. Let $A := C^*_r(G,\Sigma)$ and $B := C_0(G^{(0)})$. Then there is an isomorphism of twisted groupoids,
		\[
		\xymatrix{
			\mathscr{B}  \ar[d]_-{\phi_0}  \ar[r]& \Sigma(B) \ar[d]_-{\phi_1} \ar[r] & G(B) \ar[d]^-{\phi_2}\\
			\mathbb{T} \times G^{(0)}  \ar[r] & \Sigma \ar[r] & G
		}
		\]
	\end{proposition}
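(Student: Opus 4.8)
The plan is to exhibit an explicit isomorphism of twisted groupoids $(\Theta_0,\Theta_1,\Theta_2)\colon (\Sigma,G)\to(\Sigma(B),G(B))$ and take the $(\phi_0,\phi_1,\phi_2)$ of the statement to be its inverse. Write $A=C_r^*(G,\Sigma)$ and $B=C_0(G^{(0)})$; since $(G,\Sigma)$ is topologically principal and \'etale, $B$ is a Cartan subalgebra of $A$ by \cite{renault-1}, so the Weyl construction recalled above applies to $(A,B)$ and produces $(\Sigma(B),G(B))$ with unit space $X=G^{(0)}$. The whole argument rests on a dictionary between the normalizers $N_A(B)$ and the open bisections of $G$, which I set up first.

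Call $n\in C_c(G,\Sigma)$ a \emph{bisection function} if $\pi(\mathrm{supp}\,n)$ is contained in an open bisection $S$ of $G$; for any such $S$ these exist in abundance, obtained by choosing a bump function on $\Sigma$ supported in $\pi^{-1}(S)$ and averaging it over the $\mathbb T$-action so as to land in $C_c(G,\Sigma)$. From the convolution formulas, a bisection function $n$ supported over $S$ is a normalizer with $n^*n,nn^*\in B$ supported in $s(S),r(S)$ respectively and with associated partial homeomorphism $\alpha_n=r\circ(s|_S)^{-1}$ on $\{x:n^*n(x)>0\}$. The substantial input is the converse direction of Renault's normalizer analysis \cite{renault-1}: every $n\in N_A(B)$ agrees, near any point of $\mathrm{dom}(n)$ and up to multiplication by a strictly positive element of $B$, with a bisection function, so that $\alpha_n$ is locally of the form $r\circ(s|_S)^{-1}$ for open bisections $S$. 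Granting this, define $\Theta_2\colon G\to G(B)$ by $\Theta_2(g)=[r(g),\alpha_n,s(g)]$ for any bisection function $n$ supported over an open bisection $S\ni g$; this is well defined since if $S'\ni g$ is another such bisection then $S\cap S'$ is an open bisection containing $g$ and $(s|_S)^{-1}$, $(s|_{S'})^{-1}$ agree on the open neighbourhood $s(S\cap S')$ of $s(g)$, hence $\alpha_n$ and $\alpha_{n'}$ have the same germ there. Define $\Theta_1\colon\Sigma\to\Sigma(B)$ by $\Theta_1(\sigma)=[r(\pi\sigma),n,s(\pi\sigma)]$, where $n$ is a bisection function over an open bisection through $\pi(\sigma)$ normalized by $n(\sigma)=1$; for two such choices $n,n'$ the element $(n')^*n$ lies in $B$ and is strictly positive at $s(\pi\sigma)$, which is exactly the relation $\approx$ defining $\Sigma(B)$, so $\Theta_1$ is well defined. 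Finally let $\Theta_0\colon\mathbb T\times G^{(0)}\to\mathscr B$ be the inverse of the identification $[x,b,x]\mapsto(b(x)/|b(x)|,x)$.

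It remains to check that $(\Theta_0,\Theta_1,\Theta_2)$ is an isomorphism of twisted groupoids. Equivariance: if $n$ is the normalizer chosen for $\Theta_1(\sigma)$ then $tn$ is an admissible choice for $\Theta_1(t\sigma)$, and $[r,tn,s]=t\cdot[r,n,s]$ in $\Sigma(B)$, so $\Theta_1(t\sigma)=t\cdot\Theta_1(\sigma)$. The homomorphism property of $\Theta_1$ and $\Theta_2$ uses that $SS'$ is an open bisection when $S,S'$ are composable, that a product of bisection functions over $S$ and $S'$ is, up to a positive diagonal factor, a bisection function over $SS'$, and that $\alpha_{nn'}=\alpha_n\circ\alpha_{n'}$. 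Commutativity of the two squares of the diagram is immediate from the definitions of the chosen normalizers, and all three maps are homeomorphisms once one matches the basic Weyl-topology sets $Z(n,U)=\{[\alpha_n(x),\alpha_n,x]:x\in U\}$ with open bisections of $G$. Surjectivity of $\Theta_1$ and $\Theta_2$ is the converse half of the dictionary. Injectivity is where topological principality is used: if $\Theta_2(g)=\Theta_2(h)$ then $g$ and $h$ have the same source $y$ and range and lie in open bisections $S,S'$ whose source-inverses have the same germ at $y$; at each unit $z$ in the dense set with trivial isotropy, the arrows $(s|_S)^{-1}(z)$ and $(s|_{S'})^{-1}(z)$ have the same source and range and so coincide, whence $(s|_S)^{-1}$ and $(s|_{S'})^{-1}$ agree at $y$ by continuity and $g=h$; the argument for $\Theta_1$ is analogous.

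The main obstacle is the converse direction of the normalizer dictionary in the second step: that a normalizer of $C_0(G^{(0)})$ in the reduced twisted C$^*$-algebra does not spread across the groupoid, but is locally — and up to a strictly positive diagonal factor — a bisection function with the expected partial homeomorphism. This is exactly where the hypotheses are consumed: \'etaleness makes $C_c(G,\Sigma)$ a convolution algebra built from bisection functions, while topological principality both prevents the reconstructed groupoid from collapsing isotropy and furnishes the dense set of units with trivial isotropy required for injectivity of $\Theta_1$ and $\Theta_2$. Once this dictionary and that density are in hand, the remaining verifications are routine, if lengthy, bookkeeping.
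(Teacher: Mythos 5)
The paper offers no proof of this statement—it is quoted as Proposition 4.15 of \cite{renault-1}, with only the explicit formulas for $\phi_1,\phi_2$ recorded afterwards—and your construction is essentially Renault's argument run in the opposite direction: your $\Theta_1,\Theta_2$ are exactly the inverses of those $\phi_1,\phi_2$, built on the same dictionary between normalizers of $C_0(G^{(0)})$ and open bisections (\cite[Propositions 4.7 and 4.8]{renault-1}) and the same use of topological principality (dense units with trivial isotropy) for injectivity, so the approach matches the source the paper relies on. The one loose point is the well-definedness of $\Theta_1$: $(n')^*n$ need not lie in $B$ globally, so you should either invoke Renault's equivalent characterization of $\approx$ via positivity of $(n')^*n$ at $s(\pi\sigma)$, or note directly that over $S\cap S'$ the functions $n$ and $n'$ differ by a continuous scalar factor pulled back from the unit space, equal to $1$ at $s(\pi\sigma)$, so cutting down by suitable $b,b'\in B$ positive at $s(\pi\sigma)$ gives $nb=n'b'$—a one-line repair, not a gap.
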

	
	As mentioned in \cite{renault-1}, using the canonical action $\mathbb{T} \curvearrowright \Sigma$ in the twisted groupoid $(G,\Sigma)$, one can define a complex line bundle $L := \frac{\mathbb{C} \times \Sigma}{\sim}$, where $(c,\sigma) \sim (c',\sigma')$ iff there exists $t \in \mathbb{T}$ such that $(c,\sigma) = (\bar{t}c , t\sigma)$. Let $[c,\sigma]$ be the corresponding equivalence class, and let $\omega : L \rightarrow \mathbb{C}; [c,\sigma]\mapsto c$. Then each section of $L$ after  composing with $\omega$ could be viewed as  a map $f : G \rightarrow \mathbb{C}$. For each section $f$, the open support of $f$ is  defined to be 
	${\rm supp}'(f) = \{ g \in G ; f(g) \neq 0 \}.  $
	By  \cite[Proposition 2.4.2]{renault-book}, each element of $C^*_r(G,\Sigma)$ could be viewed as a continuous section of $L$, whose open support is the image of the open support of $f \in C_c(G,\Sigma)$ by the  twist map $\pi$ (see, section 2.2 in \cite{graded-algebra}).
	With this convention, we have the  identification,
$
	C_0(G^{(0)}) = \{ f \in C^*_r(G,\Sigma) : {\rm supp}'(f) \subset G^{(0)} \},
$
	where $h \in C_0(G^{(0)})$ is identified with $f(\sigma) = \bar{t}h(x)$, if $\sigma \in \mathbb{T} \times G^{(0)}$, and $f(\sigma) = 0$, otherwise.
	This could be used  to explicitly write  the twisted groupoid isomorphism of  Proposition \ref{r-iso} (compare with  \cite[Proposition 4.7]{renault-1}): Let $n \in N_A(B)$, since $G$ is topologically principal, $S:= {\rm supp}'(n)$ is a bisection of $G$ \cite[Proposition 4.7]{renault-1}. This gives  groupoid isomorphisms $\phi_1 : \Sigma(B) \rightarrow \Sigma;\ (y,n,x) \mapsto (n(Sy) / \sqrt{n^*n(y)} , Sx)$ and  $\phi_2 : G(B) \rightarrow G; \ [y,\alpha_n,x] \mapsto Sx$. 
	
	\begin{proposition}[Proposition 3.4 of \cite{BL-1}] \label{proposition-cartan-crossed-product}
		Let $A$ be a separable C$^*$-algebra admitting a Cartan subalgebra $B \subseteq A$. For a Cartan invariant action of a countable group $\Gamma$ on $(A, B)$,  then there is an action of $\Gamma$ on $(G(B),\Sigma(B))$ such that, 
		\[
		\Gamma \ltimes_r A \cong \Gamma \ltimes_r C^*_r(G(B),\Sigma(B)) \cong C^*_r(\Gamma \ltimes G(B),\Gamma \ltimes \Sigma(B)). 
		\]
	\end{proposition}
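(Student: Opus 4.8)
The plan is to produce the $\Gamma$-action on the Weyl twisted groupoid by functoriality of Renault's reconstruction, to read off the first isomorphism for free, and then to prove the second isomorphism by a direct comparison of convolution algebras and their regular representations. So I would first fix, once and for all, the identification $(A,B)\cong(C^*_r(G(B),\Sigma(B)),C_0(G(B)^{(0)}))$ of Proposition~\ref{Cartan-groupoid}, writing $(G,\Sigma):=(G(B),\Sigma(B))$ and $X:=\widehat{B}\cong G^{(0)}$. Given $\gamma\in\Gamma$, let $\alpha_\gamma\in\mathrm{Aut}(A)$ be the corresponding automorphism; by hypothesis $\alpha_\gamma(B)=B$, so $\alpha_\gamma$ is an automorphism of the Cartan pair $(A,B)$. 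Dualizing $\alpha_\gamma|_B$ gives a homeomorphism $\widehat\gamma$ of $X$, and since $\alpha_\gamma$ carries $N_A(B)$ bijectively onto itself with $\alpha_\gamma(n^*n)=\alpha_\gamma(n)^*\alpha_\gamma(n)$, the defining relation $n^*bn(x)=b(\alpha_n(x))\,n^*n(x)$ forces $\alpha_{\alpha_\gamma(n)}=\widehat\gamma\circ\alpha_n\circ\widehat\gamma^{-1}$ on the relevant domains. Hence the formulas
\[
\beta_\gamma[x,n,y]:=[\widehat\gamma x,\alpha_\gamma(n),\widehat\gamma y]\quad\text{on }\Sigma(B),\qquad \beta_\gamma'[x,\alpha_n,y]:=[\widehat\gamma x,\alpha_{\alpha_\gamma(n)},\widehat\gamma y]\quad\text{on }G(B)
\]
are well defined, and $(\beta_\gamma',\beta_\gamma)$ is a $\mathbb{T}$-equivariant twisted-groupoid automorphism of $(G,\Sigma)$ compatible with the twist maps $\iota$ and $\pi$; equivalently, this is the functoriality of the Weyl construction underlying Proposition~\ref{r-iso} applied to the Cartan-pair automorphism $\alpha_\gamma$.

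Next I would verify that $\gamma\mapsto(\beta_\gamma',\beta_\gamma)$ is an action of $\Gamma$ on $(G,\Sigma)$ in the sense of the definition preceding Definition~\ref{def-coe-twisted}: the homomorphism property follows from $\alpha_{\gamma\gamma'}=\alpha_\gamma\alpha_{\gamma'}$ together with the rigidity of the Weyl construction, each $\beta_\gamma$ and $\beta_\gamma'$ is continuous because $\widehat\gamma$ and $n\mapsto\alpha_n$ are, and the equivariance relations $\iota(t,\gamma u)=\gamma\iota(t,u)$, $\pi(\gamma\sigma)=\gamma\pi(\sigma)$ hold because $\widehat\gamma$ is the common restriction of $\beta_\gamma,\beta_\gamma'$ to the unit space. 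The crucial point is naturality: under the Renault identification, the automorphism of $C^*_r(G,\Sigma)$ induced by $(\beta_\gamma',\beta_\gamma)$ equals $\alpha_\gamma$. This holds because a $\mathbb{T}$-equivariant twisted-groupoid isomorphism induces, on continuous sections of the line bundle $L=\frac{\mathbb{C}\times\Sigma}{\sim}$, precisely the C$^*$-isomorphism from which it arose; concretely, using the explicit form of $\phi_1$ recalled before Proposition~\ref{proposition-cartan-crossed-product}, the map induced by $\beta_\gamma$ sends the section attached to $n\in N_A(B)$ to the section attached to $\alpha_\gamma(n)$, so it agrees with $\alpha_\gamma$ on the generating set $N_A(B)$ of $A$. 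Consequently the original action $\alpha$ of $\Gamma$ on $A$ is conjugate, through Renault's isomorphism, to the action induced by $\beta$ on $C^*_r(G,\Sigma)$, and therefore $\Gamma\ltimes_r A\cong\Gamma\ltimes_r C^*_r(G,\Sigma)$ is immediate.

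For the remaining isomorphism I would first equip $\Gamma\ltimes G$ with the twist $\mathbb{T}\times G^{(0)}\xrightarrow{(c_\Gamma,\iota)}\Gamma\ltimes\Sigma\xrightarrow{(\mathrm{id},\pi)}\Gamma\ltimes G$, exactly as in the proof of Proposition~\ref{proposition-orbit-equivlence-twist}, noting that $(\Gamma\ltimes G)^{(0)}=\{e\}\times G^{(0)}\cong G^{(0)}$ so that the group bundle $\mathbb{T}\times(\Gamma\ltimes G)^{(0)}$ is unchanged. Then I would construct a $*$-isomorphism between the dense $*$-subalgebras $C_c(\Gamma,C_c(G,\Sigma))$, carrying the twisted-crossed-product convolution, and $C_c(\Gamma\ltimes G,\Gamma\ltimes\Sigma)$, carrying the twisted-groupoid convolution, which on an element $\delta_\gamma\otimes f$ returns the $\mathbb{T}$-covariant function on $\Gamma\ltimes\Sigma$ supported on $\{\gamma\}\times\Sigma$ with fiber $f$; checking multiplicativity and the $*$-relation is a bookkeeping exercise using the semidirect-product operations $(\gamma,g)(\gamma',g')=(\gamma\gamma',(\gamma'^{-1}g)g')$ and the covariance of the $\Gamma$-action. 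Finally I would match the regular representations: for $u\in G^{(0)}$ the representation $\mathrm{Ind}\,\delta_{(e,u)}$ of $C^*_r(\Gamma\ltimes G,\Gamma\ltimes\Sigma)$ on $\mathscr{H}_{(e,u)}$ is unitarily equivalent to the representation of $\Gamma\ltimes_r C^*_r(G,\Sigma)$ induced from $\mathrm{Ind}\,\delta_u$, so the two reduced norms on the common dense $*$-algebra agree and hence the completions coincide; this is the twisted analogue of the standard identity $\Gamma\ltimes_r C^*_r(G)\cong C^*_r(\Gamma\ltimes G)$.

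The step I expect to be the main obstacle is the naturality claim in the second paragraph: one must know that Renault's reconstruction is not merely an isomorphism for a fixed Cartan pair but is functorial, so that the $\Gamma$-action built on $(G,\Sigma)$ genuinely implements the starting action on $A$. Proving this cleanly amounts either to a functorial refinement of Proposition~\ref{r-iso}, or equivalently to the observation that every automorphism of $C^*_r(G,\Sigma)$ carrying $C_0(G^{(0)})$ onto itself arises from a $\mathbb{T}$-equivariant twisted-groupoid automorphism --- which itself follows by applying Proposition~\ref{r-iso} to both sides. By contrast the first isomorphism is automatic once the action is available, and the last isomorphism, though it requires care with the $\mathbb{T}$-covariant function spaces, is a routine adaptation of the untwisted argument.
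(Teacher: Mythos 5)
Your proposal is correct and follows essentially the same route as the paper: the result is quoted there from Barlak--Li (Proposition 3.4 of \cite{BL-1}), and the paper records precisely your action formulas $\gamma[x,n,y]=[\gamma x,\gamma n,\gamma y]$ and $\gamma[x,\alpha_n,y]=[\gamma x,\alpha_{\gamma n},\gamma y]$ on the Weyl twisted groupoid, the first isomorphism coming, as in your second paragraph, from the fact that the induced groupoid automorphisms implement $\alpha_\gamma$ under Renault's identification (agreement on the normalizers $N_A(B)$), and the second from the standard identification of $\Gamma\ltimes_r C^*_r(G,\Sigma)$ with $C^*_r(\Gamma\ltimes G,\Gamma\ltimes\Sigma)$ via the dense subalgebras and induced regular representations. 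No gaps to flag.
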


		Since $\alpha : \Gamma \curvearrowright A$ is Cartan invariant, there is a restricted action of $\Gamma$ on $B \cong C_0(G^{(0)})$, or equivalently, on $G^{(0)}$. This guarantees that $N_B(A)$ is invariant under the action. A simple calculation shows that
		$
			\alpha_{\gamma n}(\gamma y) = \gamma \alpha_n(y).
		$
		We have the actions, $\Gamma \curvearrowright G$ and $\Gamma \curvearrowright \Sigma$ given by,
		\begin{align}\label{group action on groupoid}
			\gamma [x,\alpha_n,y] = [\gamma x , \alpha_{\gamma n} , \gamma y], \ \  \gamma[x,n,y] = [\gamma x, \gamma n , \gamma y],
		\end{align}
		which yields an action of $\Gamma$ on the twisted groupoid $(G(B),\Sigma(B))$.

		Let $(A_1,B_1)$ and $(A_2,B_2)$ be Cartan pairs, and $\psi : A_1 \rightarrow A_2$ be a Cartan invariant isomorphism. By Proposition \ref{Cartan-groupoid}, $(A_i,B_i) \cong (C^*_r(G_i,\Sigma_i),C_0(G^{(0)}))$, for $i = 1,2$. The Cartan invariance gives an isomorphism $G_1^{(0)} \cong G_2^{(0)}$, again denoted by $\psi$. For $n \in N_{A_1}(B_1)$, $x \in G_1^{(0)}$, and  $b' \in B_2$, say, $b' = \psi(b)$, for some $b \in B_1$, we have,
		\begin{align*}
			\psi(n^*) \psi(b) \psi(n)(\psi(x)) &= \psi(b)(\alpha_{\psi(n)} (\psi(x))) \psi(n^*n)(\psi(x)), \\  n^*bn(x) &= b(\psi^{-1}(\alpha_{\psi(n)} (\psi(x)))) n^*n(x), \\ b(\alpha_n(x)) n^*n(x) &= b(\psi^{-1}(\alpha_{\psi(n)} (\psi(x)))) n^*n(x). 
		\end{align*}
		Since $x \in$ dom$(n)$, $n^*n(x) > 0$, and the above equalities  hold for all $b \in B_1$, we get, $\alpha_{\psi(n)} (\psi(x)) = \psi(\alpha_n(x))$.
		Thus, there exists a twisted groupoid isomorphism
		$(\theta,\theta') : (G_1,\Sigma_1) \rightarrow (G_2,\Sigma_2)$, given by,
		\begin{equation}\label{groupoid-renault-isomorphism}
			\begin{aligned}
				&\theta([\alpha_{n_1}(x_1),\alpha_{n_1},x_1]) = [\alpha_{\psi(n_1)}(\psi(x_1)),\alpha_{\psi(n_1)},\psi(x_1)], \\ &\theta'[\alpha_{n_1}(x_1),n_1,x_1] = [\alpha_{\psi(n_1)}(\psi(x_1)),\psi(n_1),\psi(x_1)].
			\end{aligned}
		\end{equation}

\begin{definition} \label{cartanpair}
	Let $(A_1,B_1)$ and $(A_2,B_2)$ be Cartan pairs. Two Cartan invarinat actions $\Gamma_1 \curvearrowright A_1$ and $\Gamma_2 \curvearrowright A_2$ are said to be continuous orbit equivalent if the induced action on the corresponding Weyl twisted groupoids are continuous orbit equivalent. In this case, we write $\Gamma_1 \curvearrowright (A_1, B_1)\sim_{\rm coe} \Gamma_2 \curvearrowright (A_2, B_2)$, or simply $\Gamma_1 \curvearrowright A_1\sim_{\rm coe}\Gamma_2 \curvearrowright A_2$, if the Cartan subalgebras are known from the context. Two Cartan invarinat actions $\Gamma_1 \curvearrowright (A_1, B_1)$ and $\Gamma_2 \curvearrowright (A_2, B_2)$ are said to be conjugate if there is a covariant isomorphism $\phi: A_1\to A_2$ with $\phi(B_1)=B_2$. In this case, we write $\Gamma_1 \curvearrowright (A_1,B_1)\sim_{\rm con}\Gamma_2 \curvearrowright (A_2,B_2)$.
\end{definition}

We usually start with given actions on (twisted) groupoids and then lift the actions to the corresponding reduced groupoid $C^*$-algebras. If these  groupoid $C^*$-algebras happen to have Cartan subalgebras and the induced actions are Cartan invariant, then we get actions on the corresponding Weyl twisted groupoids. The next lemma follows from {\it Corollary C}.

\begin{lemma}
	Topologically principally free conjugate actions on Cartan pairs are continuous orbit equivalent.
\end{lemma}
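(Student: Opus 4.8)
The plan is to deduce the lemma from \emph{Corollary C} (Corollary~\ref{main-corollary}): I will convert the conjugating data into an isomorphism of reduced crossed products satisfying the three conditions in part~(2) of that corollary, and then invoke the implication $(2)\Rightarrow(1)$, which is available precisely because the actions are topologically principally free.

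First I would unwind the hypothesis. By Definition~\ref{cartanpair}, conjugacy of the Cartan invariant actions $\Gamma_1\curvearrowright(A_1,B_1)$ and $\Gamma_2\curvearrowright(A_2,B_2)$ supplies a group isomorphism $\rho:\Gamma_1\to\Gamma_2$ together with a $C^*$-isomorphism $\phi:A_1\to A_2$ which carries $B_1$ onto $B_2$ and intertwines the two actions via $\rho$. Cartan invariance also guarantees that each $B_i$ is globally invariant under $\Gamma_i$, so that $\Gamma_i\ltimes_r B_i$ is a well-defined subalgebra of $\Gamma_i\ltimes_r A_i$.

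Next I would construct the required isomorphism $\theta$. On the dense $*$-subalgebra $C_c(\Gamma_1,A_1)\subset\Gamma_1\ltimes_r A_1$ I set $\theta\big(\sum_\gamma a_\gamma\delta_\gamma\big):=\sum_\gamma\phi(a_\gamma)\,\delta_{\rho(\gamma)}$. A direct check shows that $\theta$ respects products, the involution, and the covariance relation; since $(\rho,\phi)$ is an equivariant isomorphism, $\theta$ extends to an isomorphism of the full crossed products and, by functoriality of the reduced crossed product under equivariant isomorphisms, descends to a $*$-isomorphism $\theta:\Gamma_1\ltimes_r A_1\to\Gamma_2\ltimes_r A_2$, whose inverse is built the same way from $(\rho^{-1},\phi^{-1})$. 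Viewing $A_i$ and $B_i$ as the subalgebras $A_i\delta_{e_i}$ and $B_i\delta_{e_i}$ and using $\rho(e_1)=e_2$, one reads off $\theta(A_1)=A_2$ and $\theta(B_1)=B_2$ from the corresponding properties of $\phi$. Likewise, $\Gamma_i\ltimes_r B_i$ is the closed linear span of $C_c(\Gamma_i,B_i)$ inside $\Gamma_i\ltimes_r A_i$, and $\theta$ sends each $b\delta_\gamma$ with $b\in B_1$ to $\phi(b)\delta_{\rho(\gamma)}$ with $\phi(b)\in B_2$; hence $\theta(\Gamma_1\ltimes_r B_1)\subseteq\Gamma_2\ltimes_r B_2$, and equality follows by applying the same argument to $\theta^{-1}$. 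Thus $\theta$ satisfies all three conditions in part~(2) of Corollary~\ref{main-corollary}.

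Finally, since both actions are topologically principally free (in particular topologically free), Corollary~\ref{main-corollary} applies and gives $(2)\Rightarrow(1)$, i.e.\ $\Gamma_1\curvearrowright(A_1,B_1)\sim_{\rm coe}\Gamma_2\curvearrowright(A_2,B_2)$, which is the assertion. I expect the only genuinely delicate point to be the descent of $\theta$ from the full to the reduced crossed product, i.e.\ the naturality of $\Gamma\ltimes_r(-)$ with respect to equivariant $C^*$-isomorphisms; this is standard, and the rest is bookkeeping. It is worth noting that topological principal freeness is used only through this appeal to Corollary~\ref{main-corollary}, since conjugacy is a priori strictly stronger than continuous orbit equivalence.
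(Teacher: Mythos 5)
Your proposal is correct and follows exactly the route the paper intends: the paper's proof of this lemma is simply the remark that it ``follows from Corollary C,'' and your argument supplies precisely the implicit step, namely that the covariant isomorphism $(\rho,\phi)$ coming from conjugacy induces an isomorphism $\theta$ of reduced crossed products satisfying the three conditions of part~(2) of Corollary~\ref{main-corollary}, after which $(2)\Rightarrow(1)$ for topologically principally free actions gives continuous orbit equivalence. The verification of the three bullet conditions and the descent of $\theta$ to the reduced crossed products are handled correctly, so no gaps remain.
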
  

In practice, the original actions might be on non \'{e}tale groupoids, and so the actions could not be orbit equivalent in the level of groupoids, but they are equivalent (and indeed conjugate) in the level of $C^*$-algebras. To clarify this rather vague description of the situation, we need some definitions. 

 \begin{definition}
	Let $(G,\Sigma)$ be a (not necessarily \'{e}tale) twisted groupoid such that $C^*_r(G,\Sigma)$ has a Cartan subalgebra $C$. Let $\Gamma\curvearrowright (G,\Sigma)$ be an action such that the lifted action on $C^*_r(G,\Sigma)$ leaves $C$ invariant. Then the unique Weyl twisted (\'{e}tale) groupoid $(\tilde G,\tilde \Sigma)$ satisfying $\big(C^*_r(G,\Sigma), C\big)\simeq\big(C^*_r(\tilde G,\tilde \Sigma), C_0(\tilde{G}^{(0)})\big)$ is called an \'{e}tale realization of $(G,\Sigma)$, or more precisely, the \'{e}tale realization of $(G,\Sigma)$ with respect to $C$. If moreover, $\Gamma\curvearrowright\big(C^*_r(G,\Sigma), C\big)\sim_{\rm con}\Gamma\curvearrowright\big(C^*_r(\tilde G,\tilde \Sigma), C_0(\tilde{G}^{(0)})\big)$, then $(\tilde G,\tilde \Sigma)$ is called an equivariant \'{e}tale realization of $(G,\Sigma)$, or more precisely, the equivariant \'{e}tale realization of $(G,\Sigma)$ with respect to $C$.    
\end{definition}

The next lemma is just a restatement of transitivity of continuous orbit equivalence of actions on (\'{e}tale) twisted groupoids.

\begin{lemma}
Given Cartan invariant actions $\Gamma_i\curvearrowright (A_i, B_i)$ and $\Lambda_i\curvearrowright (D_i, C_i)$ with $\Gamma_i\curvearrowright (A_i, B_i)\sim_{\rm coe}\Lambda_i\curvearrowright (D_i, C_i)$, for $i=1,2$, we have $\Gamma_1\curvearrowright (A_1, B_1)\sim_{\rm coe}\Gamma_1\curvearrowright (A_1, B_1)$ iff $\Lambda_1\curvearrowright (D_1, C_1)\sim_{\rm coe}\Lambda_1\curvearrowright (D_1, C_1)$.  
\end{lemma}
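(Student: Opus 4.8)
The plan is to show that $\sim_{\rm coe}$ is an equivalence relation --- the surrounding text flags this lemma as ``a restatement of transitivity'' --- both for actions on (\'etale) twisted groupoids and for Cartan-invariant actions on Cartan pairs; the asserted ``iff'' is then immediate by chaining the hypotheses with symmetry. Since, by Definition \ref{cartanpair}, continuous orbit equivalence of Cartan-invariant actions is \emph{defined} through the induced actions on the associated Weyl twisted groupoids, and since the Weyl twisted groupoid attached to a Cartan pair is unique up to $\mathbb{T}$-equivariant twisted isomorphism by Proposition \ref{r-iso}, it suffices to establish symmetry and transitivity of $\sim_{\rm coe}$ for actions on (\'etale) twisted groupoids; and by Definition \ref{def-coe-twisted} this in turn reduces to the corresponding statements for the actions $\Gamma_i \curvearrowright \Sigma_i$ on the total groupoids in the sense of Definition \ref{groupoid oe}, together with the bookkeeping observation that inverses and composites of $\mathbb{T}$-equivariant isomorphisms are again $\mathbb{T}$-equivariant (and hence still descend to the quotient groupoids $G_i=\Sigma_i/\mathbb{T}$).

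For symmetry, if $\Gamma_1 \curvearrowright G_1 \sim_{\rm coe} \Gamma_2 \curvearrowright G_2$ is witnessed by a groupoid isomorphism $\phi : G_1 \to G_2$ and cocycle maps $a,b$, then $\phi^{-1} : G_2 \to G_1$ together with the pair $(b,a)$ witnesses $\Gamma_2 \curvearrowright G_2 \sim_{\rm coe} \Gamma_1 \curvearrowright G_1$; this is immediate from the defining identities of Definition \ref{groupoid oe} (and in the twisted case $\phi^{-1}$ is $\mathbb{T}$-equivariant). For transitivity, suppose $\phi : G_1 \to G_2$ and $\psi : G_2 \to G_3$ implement $\Gamma_1 \curvearrowright G_1 \sim_{\rm coe} \Gamma_2 \curvearrowright G_2$ with cocycle maps $a_{12},b_{12}$, and $\Gamma_2 \curvearrowright G_2 \sim_{\rm coe} \Gamma_3 \curvearrowright G_3$ with cocycle maps $a_{23},b_{23}$. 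Put $\Phi := \psi \circ \phi$, and define
\[
a_{13}(\gamma_1,u) := a_{23}\big(a_{12}(\gamma_1,u),\phi(u)\big),\qquad b_{13}(\gamma_3,w) := b_{12}\big(b_{23}(\gamma_3,w),\psi^{-1}(w)\big),
\]
for $\gamma_1\in\Gamma_1$, $u\in G_1^{(0)}$, $\gamma_3\in\Gamma_3$, $w\in G_3^{(0)}$, which are continuous as composites of continuous maps (using that $\phi,\psi$ restrict to homeomorphisms of the unit spaces). Since $\phi$ and $\psi$ are groupoid isomorphisms they intertwine the range and source maps, so $r(\phi(g_1))=\phi(r(g_1))$, and one computes
\begin{align*}
\Phi(\gamma_1 g_1) &= \psi\big(a_{12}(\gamma_1,r(g_1))\,\phi(g_1)\big) = a_{23}\big(a_{12}(\gamma_1,r(g_1)),\phi(r(g_1))\big)\,\psi(\phi(g_1)) \\
&= a_{13}(\gamma_1,r(g_1))\,\Phi(g_1),
\end{align*}
with the identical computation valid upon replacing $r$ by $s$, and the symmetric computation for $\Phi^{-1}=\phi^{-1}\circ\psi^{-1}$ with $b_{13}$. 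Hence $\Phi,a_{13},b_{13}$ witness $\Gamma_1 \curvearrowright G_1 \sim_{\rm coe} \Gamma_3 \curvearrowright G_3$; in the twisted case the implementing isomorphism $\Sigma_1\to\Sigma_3$ is the $\mathbb{T}$-equivariant composite of the two given ones, so the same argument applies on the $\Sigma_i$.

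Transporting back, $\sim_{\rm coe}$ on Cartan-invariant actions on Cartan pairs is symmetric and transitive; combining symmetry of the two hypotheses $\Gamma_i \curvearrowright (A_i,B_i) \sim_{\rm coe} \Lambda_i \curvearrowright (D_i,C_i)$ with transitivity gives the stated equivalence in both directions (indeed $\sim_{\rm coe}$ is an equivalence relation, reflexivity being the identity isomorphism with trivial cocycles). The only point requiring care --- and the place where I expect the argument could most easily go wrong --- is that Definition \ref{groupoid oe} demands the \emph{same} cocycle pair $(a,b)$ serve for both the $r$- and the $s$-version; this common choice is preserved under composition precisely because $\phi$ and $\psi$, being groupoid isomorphisms, intertwine $r$ and $s$, so no obstruction arises. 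Beyond that the proof is routine bookkeeping, the mild nuisance being to keep the three index conventions and the $r/s$ distinction straight and to invoke the uniqueness of the Weyl twisted groupoid at the right moment.
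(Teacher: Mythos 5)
Your proposal is correct and matches the paper's (essentially unwritten) argument: the paper states this lemma without proof as ``just a restatement of transitivity,'' and your verification that $\sim_{\rm coe}$ is symmetric and transitive --- by inverting/composing the implementing ($\mathbb{T}$-equivariant) isomorphisms and composing the cocycles, checking that the same pair works for both the $r$- and $s$-versions --- together with the chaining argument is exactly the intended content. You also (correctly) read the statement with the evident typo fixed, i.e.\ as asserting $\Gamma_1\curvearrowright(A_1,B_1)\sim_{\rm coe}\Gamma_2\curvearrowright(A_2,B_2)$ iff $\Lambda_1\curvearrowright(D_1,C_1)\sim_{\rm coe}\Lambda_2\curvearrowright(D_2,C_2)$, which is what the transitivity framing requires.
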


The next corollary is an immediate consequence of the above lemma, but despite its easy proof, it is crucial in handling orbit equivalence of actions on non \'{e}tale groupoids. 

\begin{corollary}\label{key}
For $i=1,2$, let $(G_i,\Sigma_i)$ be (not necessarily \'{e}tale) twisted groupoids such that $C^*_r(G_i,\Sigma_i)$ has a Cartan subalgebra $C_i$. Let $\Gamma_i\curvearrowright (G_i,\Sigma_i)$ be an action such that the lifted action on $C^*_r(G_i,\Sigma_i)$ leaves $C_i$ invariant, and let $(G_i,\Sigma_i)$ has an equivariant \'{e}tale realization $(\tilde G_i,\tilde \Sigma_i)$ with respect to $C_i$. Then,

$(i)$ $\Gamma_1\curvearrowright (G_1,\Sigma_1)\sim_{\rm coe} \Gamma_2\curvearrowright (G_2,\Sigma_2)$ implies $\Gamma_1\curvearrowright (\tilde G_1,\tilde\Sigma_1)\sim_{\rm coe} \Gamma_2\curvearrowright (\tilde G_2,\tilde\Sigma_2)$.

$(ii)$  $\Gamma_1\curvearrowright (G_1,\Sigma_1)\sim_{\rm coe} \Gamma_2\curvearrowright (G_2,\Sigma_2)$ implies  $\Gamma_1\curvearrowright (C^*_r(G_1,\Sigma_1), C_1)\sim_{\rm coe} \Gamma_2\curvearrowright C^*_r(G_2,\Sigma_2), C_2).$  
\end{corollary}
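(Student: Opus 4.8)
The plan is to reduce both assertions to the transitivity of continuous orbit equivalence (the preceding lemma) together with the definition of an equivariant \'{e}tale realization. First I would observe that, by hypothesis, for each $i=1,2$ we have a Cartan pair $\big(C^*_r(G_i,\Sigma_i), C_i\big)$ whose Weyl twisted groupoid is, by definition of the realization, $(\tilde G_i,\tilde\Sigma_i)$, so that $\big(C^*_r(G_i,\Sigma_i), C_i\big)\simeq\big(C^*_r(\tilde G_i,\tilde\Sigma_i), C_0(\tilde G_i^{(0)})\big)$, and moreover the lifted action of $\Gamma_i$ is conjugate through this isomorphism: $\Gamma_i\curvearrowright\big(C^*_r(G_i,\Sigma_i), C_i\big)\sim_{\rm con}\Gamma_i\curvearrowright\big(C^*_r(\tilde G_i,\tilde\Sigma_i), C_0(\tilde G_i^{(0)})\big)$. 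In the language of Definition \ref{cartanpair}, the latter says precisely that the action $\Gamma_i\curvearrowright (C^*_r(G_i,\Sigma_i), C_i)$ and the action $\Gamma_i\curvearrowright (\tilde G_i,\tilde\Sigma_i)$ on the Weyl twisted groupoid are conjugate, hence in particular continuous orbit equivalent, since conjugate actions on Cartan pairs are continuous orbit equivalent (this is the content of the lemma following Definition \ref{cartanpair}).

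For part $(i)$, I would argue as follows. Assume $\Gamma_1\curvearrowright (G_1,\Sigma_1)\sim_{\rm coe}\Gamma_2\curvearrowright (G_2,\Sigma_2)$. By Corollary \ref{main-corollary} (or directly by passing to $C^*$-algebras via Proposition \ref{proposition-cartan-crossed-product} and Theorem \ref{theorem-1}) this gives $\Gamma_1\curvearrowright (C^*_r(G_1,\Sigma_1), C_1)\sim_{\rm coe}\Gamma_2\curvearrowright (C^*_r(G_2,\Sigma_2), C_2)$; this is the substance of part $(ii)$, so I would prove $(ii)$ first and deduce $(i)$ from it. Granting $(ii)$, we now have the chain
\[
\Gamma_1\curvearrowright (\tilde G_1,\tilde\Sigma_1)\sim_{\rm coe}\Gamma_1\curvearrowright (C^*_r(G_1,\Sigma_1), C_1)\sim_{\rm coe}\Gamma_2\curvearrowright (C^*_r(G_2,\Sigma_2), C_2)\sim_{\rm coe}\Gamma_2\curvearrowright (\tilde G_2,\tilde\Sigma_2),
\]
where the outer two equivalences come from the equivariant realization (conjugacy implies coe) and the middle one is $(ii)$. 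Transitivity of $\sim_{\rm coe}$ (the preceding lemma, applied with $\Lambda_i=\Gamma_i$ and the two Cartan pairs being the crossed-product pair and its Weyl realization) then yields $\Gamma_1\curvearrowright (\tilde G_1,\tilde\Sigma_1)\sim_{\rm coe}\Gamma_2\curvearrowright (\tilde G_2,\tilde\Sigma_2)$, which is $(i)$.

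For part $(ii)$ directly: from $\Gamma_1\curvearrowright (G_1,\Sigma_1)\sim_{\rm coe}\Gamma_2\curvearrowright (G_2,\Sigma_2)$ one applies Proposition \ref{proposition-cartan-crossed-product} to identify $\Gamma_i\ltimes_r C^*_r(G_i,\Sigma_i)$ with $C^*_r(\Gamma_i\ltimes \tilde G_i,\Gamma_i\ltimes\tilde\Sigma_i)$ and then invokes the direction of Corollary \ref{main-corollary} going from coe of the actions on the (\'{e}tale) Weyl groupoids to the existence of the structure-preserving isomorphism $\theta$ of crossed products; translating back through Definition \ref{cartanpair} gives $\Gamma_1\curvearrowright (C^*_r(G_1,\Sigma_1), C_1)\sim_{\rm coe}\Gamma_2\curvearrowright (C^*_r(G_2,\Sigma_2), C_2)$. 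The only subtlety — and the step I expect to be the main obstacle — is a bookkeeping one: Corollary \ref{main-corollary} as stated presumes the groupoids are \'{e}tale, so one must first replace $(G_i,\Sigma_i)$ by the \'{e}tale realization $(\tilde G_i,\tilde\Sigma_i)$ and check that the coe of the non-\'{e}tale actions, which a priori is only an isomorphism of $\Sigma_i$ with cocycles on $G_i^{(0)}$, descends correctly to coe of the \'{e}tale realizations; this is where the equivariance of the realization is used in an essential way, and one should be careful that ``$\sim_{\rm coe}$'' for actions on non-\'{e}tale twisted groupoids in Definition \ref{def-coe-twisted} is consistent with the \'{e}tale picture. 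Once this identification is made, everything else is a formal application of transitivity.
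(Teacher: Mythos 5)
Your outer scaffolding---the observation that an equivariant \'{e}tale realization gives $\Gamma_i\curvearrowright\big(C^*_r(G_i,\Sigma_i),C_i\big)\sim_{\rm con}\Gamma_i\curvearrowright\big(C^*_r(\tilde G_i,\tilde\Sigma_i),C_0(\tilde G_i^{(0)})\big)$, hence $\sim_{\rm coe}$, and that everything should then follow from the transitivity lemma---is exactly the paper's argument, which consists of nothing more than ``immediate consequence of the above lemma.'' The trouble is your treatment of the one step that carries the actual content: passing from the hypothesis, which is continuous orbit equivalence of the actions on the possibly non-\'{e}tale groupoids $(G_i,\Sigma_i)$ in the sense of Definitions \ref{groupoid oe} and \ref{def-coe-twisted}, to continuous orbit equivalence at the level of the Cartan pairs (part $(ii)$) or of the \'{e}tale realizations (part $(i)$). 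You deduce $(i)$ from $(ii)$, and then argue $(ii)$ by invoking Proposition \ref{proposition-cartan-crossed-product}, Theorem \ref{theorem-1} and Corollary \ref{main-corollary}; but those results are stated for \'{e}tale twisted groupoids and, in the direction you need, take as input precisely coe of the actions on the (\'{e}tale) Weyl groupoids---that is, Cartan-pair coe in the sense of Definition \ref{cartanpair}---which is exactly what has to be extracted from the non-\'{e}tale hypothesis (they also carry topological (principal) freeness assumptions that Corollary \ref{key} does not make). You then set aside, as a ``bookkeeping subtlety,'' the verification that coe of the non-\'{e}tale actions descends to coe of the \'{e}tale realizations; but that verification is literally statement $(i)$. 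Since $(i)$ is deduced from $(ii)$ and $(ii)$ rests on the deferred step, the proposal is circular at the only point where the corollary has content.

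For comparison, the paper's intended route never passes through Theorem \ref{theorem-1} or Corollary \ref{main-corollary}: by Definition \ref{cartanpair}, the Cartan-pair coe in $(ii)$ is by definition coe of the induced actions on the Weyl groupoids of $\big(C^*_r(G_i,\Sigma_i),C_i\big)$, and these Weyl-groupoid actions are identified (via the definition of equivariant \'{e}tale realization together with Lemma \ref{lemma lifted action} and the conjugacy-implies-coe lemma) with the actions $\Gamma_i\curvearrowright(\tilde G_i,\tilde\Sigma_i)$; so $(i)$ and $(ii)$ are two formulations of one assertion, and the transitivity lemma reduces both to transferring the given coe data from $(G_i,\Sigma_i)$ to this \'{e}tale picture. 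A complete argument must supply that transfer explicitly: from the $\mathbb T$-equivariant isomorphism $\phi:\Sigma_1\to\Sigma_2$ and the cocycles of the hypothesis, produce an isomorphism of Cartan pairs $\big(C^*_r(G_1,\Sigma_1),C_1\big)\to\big(C^*_r(G_2,\Sigma_2),C_2\big)$ intertwining the actions up to the cocycles---noting that nothing in the stated hypotheses relates $\phi$ to the chosen Cartan subalgebras $C_i$, so this compatibility has to be argued (in the paper's examples it holds by construction). Your write-up correctly flags this as the main obstacle but does not overcome it, and the detour through Corollary \ref{main-corollary} does not help, since translating its conclusion back into coe would in addition require topological principal freeness.
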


We later use this to build concrete examples of orbit equivalent actions on Cartan pairs (Example \ref{example1}). We warn the reader that the converse implication in part $(i)$ of the above Corollary may fail in general. With the same token, the implication in part $(ii)$ may fail if the twisted groupoids $(G_i, \Sigma_i)$ happen not  to have an equivariant \'{e}tale realization.

		\section{Proof of The Main Results} \label{3}
	In this section we prove the main results of the paper. One of the main steps in proving our first main result  is to use Proposition \ref{r-iso} for $C^*_r(\Gamma \ltimes G, \Gamma \ltimes \Sigma)$. In order to do this, we need a condition to  ensure topological principality of $(\Gamma \ltimes G, \Gamma \ltimes \Sigma)$. 
	
	\begin{definition}\label{def-topologically-prinipally-free}
		An action $\Gamma \curvearrowright G$ is called principally free if
$\gamma  r(g) = s(g)$ forces $\gamma$ to be the identity of $\Gamma$ and $g$ to be a unit element in $G^{(0)}.$
		It is called topologically principally free if there is a dense subset $U \subseteq G^{(0)}$ such that the above condition holds for  each $g \in G_u$ and each $u\in U$. 
	\end{definition}
	The (topologically) principally free actions are (topologically) free.
	An action $\Gamma \curvearrowright G$ is principally free iff  $\Gamma \ltimes G$ is (topologically) principal. Note that,  this notion is defined only for actions on (topologically) principal groupoids.

	Let $(G,\Sigma)$ and $(H,E)$ be twisted groupoids. By an open embedding of $(H,E)$ in $(G,\Sigma)$, written as $(H,E) \subset^\circ (G,\Sigma)$, we mean open inclusions $G^{(0)} \subseteq H \subseteq G$ and $\Sigma^{(0)} \subset E \subset \Sigma$, such that the following diagram commutes, 
	\begin{align*}\label{diagram-open-inclusion}
		\xymatrix{
			\mathbb{T} \times G^{(0)}  \ar@{=}[d]  \ar[r]& \Sigma_1 \ar[r] & G_1 \\
			\mathbb{T} \times H^{(0)}   \ar[r] & E \ar@{^{(}->}[u] \ar[r] & H \ar@{^{(}->}[u]
		}
	\end{align*}
	
	Let $(G,\Sigma)$ be a twisted   groupoid with twist maps $\iota_0$ and $\pi_0$, and let  $\Gamma$ act on $(G,\Sigma)$. Then 
	\[
	\mathbb{T} \times G^{(0)}  \overset{\iota}{\rightarrowtail} \Gamma \ltimes \Sigma \overset{\pi}{\twoheadrightarrow} \Gamma \ltimes G
	\]
	is a twist, where $\iota(t,v):=(e, \iota_0(t,v))$, for the identity element $e$ of $\Gamma$, and $\pi(\gamma , \sigma):=(\gamma , \pi_0(\sigma))$. 
	\begin{enumerate}
		\item There exist an open embedding which embeds $(G,\Sigma)$ in $(\Gamma \ltimes G,\Gamma \ltimes \Sigma)$,
		\[
		\xymatrix{
			\mathbb{T} \times G^{(0)}  \ar@{=}[d]  \ar[r]^{\iota} & \Gamma \ltimes \Sigma \ar[r]^{\pi} & \Gamma \ltimes G \\
			\mathbb{T} \times G^{(0)}   \ar[r]_{\iota_0} & \Sigma \ar@{^{(}->}[u]_{\eta} \ar[r]_{\pi_0} & G \ar@{^{(}->}[u]_{\eta'}
		}
		\]
		where $\eta(\sigma) := (e,\sigma)$ and $\eta'(g) := (e,g)$, for $\sigma\in \Sigma, g\in G$.  By  \cite[Lemma 3.4]{BL-2}, there is a canonical embedding $C^*_r(G,\Sigma) \hookrightarrow C^*_r(\Gamma \ltimes G, \Gamma \ltimes \Sigma)$.
		
		\item There is an open embedding of $\Gamma \ltimes G^{(0)}$ with trivial twist in $(\Gamma \ltimes G,\Gamma \ltimes \Sigma)$,
		\[
		\xymatrix{
			\mathbb{T} \times G^{(0)}  \ar[r]^{\iota} & \Gamma \ltimes \Sigma \ar[r]^{\pi} & \Gamma \ltimes G \\
			\mathbb{T} \times G^{(0)}  \ar[u]_{(id_\mathbb{T} , id_G) }   \ar[r] & \mathbb{T} \times (\Gamma \ltimes G^{(0)}) \ar@{^{(}->}[u]_{\theta} \ar[r] & \Gamma \ltimes G^{(0)} \ar@{^{(}->}[u]_{(id_\mathbb{T} , id_G)}
		}
		\]
		where $\theta(t,(\gamma , v))=(\gamma , \iota_0(t,v))$. By the definition of twisted groupoid, $\pi_0^{-1}(u) = \iota_0(\mathbb{T} \times \{u\})$, for  $u \in G^{(0)}$, and $(\Gamma,\iota_0(\mathbb{T}\times G^{(0)})) = (\Gamma,\pi^{-1}(G^{(0)}))$ is open, since $\Gamma$ is discrete and $G^{(0)}$ is open. We use the notation $\Gamma \ltimes G^{(0)} \subset^\circ (G,\Sigma)$ for this open inclusion. There is a natural isomorphism between $\Gamma \ltimes_r C_0(G^{(0)})$ and $C^*_r(\Gamma \ltimes G^{(0)})$, and again  by  \cite[Lemma 3.4]{BL-2}, there is a canonical embedding $\Gamma \ltimes_r C_0(G^{(0)}) \hookrightarrow C^*_r(\Gamma \ltimes G, \Gamma \ltimes \Sigma)$.
	\end{enumerate}

		\begin{lemma}\label{lemma-subgroupoid}
		Let $(G,\Sigma)$ and $(H,E)$ be twisted groupoids with  $(G,\Sigma) \subset^\circ  (H,E)$. Then,
				\begin{enumerate}
			\item $C^*_r(H,E) \overset{\iota}{\hookrightarrow} C^*_r(G,\Sigma)$ and the restriction of $\iota$ to $C_0(H^{(0)})$ is an isomorphism onto $C_0(G^{(0)})$.
		\end{enumerate}
		If moreover the twisted groupoids are topologically principal, 
		\begin{enumerate}
			\setcounter{enumi}{1}
			\item $H(C_0(H^{(0)})) \overset{\iota_2}{\hookrightarrow} G(C_0(G^{(0)}))$ and $E(C_0(H^{(0)})) \overset{\iota_1}{\hookrightarrow} \Sigma(C_0(H^{(0)}))$, where $\iota_1[\alpha_n(x),n,x]:=[\alpha_{\iota(n)}(x),\iota(n),x]$ $\iota_2[\alpha_n(x),\alpha_n,x]:=[\alpha_{\iota(n)}(x),\alpha_{\iota(n)},x]$, 
			\item  for isomorphisms\  $(\phi_2,\phi_1) : \big(G(C_0(G^{(0)})), \Sigma(C_0(G^{(0)}))\big) \rightarrow (G,\Sigma)$ \ and $(\phi'_2, \phi'_1) : \big(H(C_0(H^{(0)})),E(C_0(H^{(0)}))\big) \rightarrow (H,E)$, defined in Proposition \ref{r-iso}, the following diagrams commute,
			\begin{align*}
				\xymatrix{
					H(C_0(H^{(0)}))  \ar[d]_-{\phi_2'}  \ar@{^{(}->}[r]^{\iota_2} & G(C_0(G^{(0)})) \ar[d]_-{\phi_2} \\
					H  \ar@{^{(}->}[r] & G
				}
				&&
				\xymatrix{
					E(C_0(H^{(0)}))  \ar[d]_-{\phi'_1}  \ar@{^{(}->}[r]^{\iota_1} & \Sigma(C_0(G^{(0)})) \ar[d]_-{\phi_1} \\
					E  \ar@{^{(}->}[r] & \Sigma
				}
			\end{align*}
		\end{enumerate}
	\end{lemma}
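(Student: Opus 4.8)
The plan is to deduce all three parts from a single observation: extension by zero realizes $C^*_r(H,E)$ as a sub-$C^*$-algebra of $C^*_r(G,\Sigma)$ that shares the diagonal $C_0(H^{(0)})=C_0(G^{(0)})$, after which the Weyl-groupoid statements fall out by tracking the definitions of $\alpha_n$, the germ equivalences, and the explicit form of the isomorphism in Proposition~\ref{r-iso} recalled above. (Throughout I read the open embedding so that $H\subseteq G$ and $E\subseteq\Sigma$ are open with $H^{(0)}=G^{(0)}$, as the conclusions require.)

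For (1): $H$ inherits the standing hypotheses on $G$, being an open subgroupoid containing the unit space, and its Haar system of counting measures is the restriction of that of $G$; hence an element of $C_c(H,E)$, extended by $0$ to $\Sigma$, lies in $C_c(G,\Sigma)$, and the resulting map $C_c(H,E)\to C_c(G,\Sigma)$ is a $*$-homomorphism since $H$ is a subgroupoid (convolutions of $H$-supported sections stay $H$-supported, and the Haar systems agree on $H$). By \cite[Lemma 3.4]{BL-2} it extends to an injective $*$-homomorphism $\iota:C^*_r(H,E)\hookrightarrow C^*_r(G,\Sigma)$. A section supported on the common unit space is untouched by extension by zero, so $\iota$ restricts to the tautological isomorphism $C_0(H^{(0)})\xrightarrow{\;\cong\;}C_0(G^{(0)})$. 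I would also record here the two facts that drive the rest: $\iota$ is the identity on the diagonal, hence carries $N_{C^*_r(H,E)}(C_0(H^{(0)}))$ into $N_{C^*_r(G,\Sigma)}(C_0(G^{(0)}))$; and it preserves open supports, so $\mathrm{supp}'(\iota(n))=\mathrm{supp}'(n)$, a bisection now regarded inside $G$.

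For (2) and (3): a unit with trivial isotropy in $G$ also has trivial isotropy in $H$, so topological principality passes to $(H,E)$ and the Weyl constructions apply to both pairs. Since $\iota$ fixes the diagonal, the defining relation $n^*bn(x)=b(\alpha_n(x))\,n^*n(x)$ transports verbatim to $\iota(n)$, giving $\mathrm{dom}(\iota(n))=\mathrm{dom}(n)$ and $\alpha_{\iota(n)}=\alpha_n$ on the common spectrum $X$; consequently $\iota_1[\alpha_n(x),n,x]=[\alpha_{\iota(n)}(x),\iota(n),x]$ and $\iota_2[\alpha_n(x),\alpha_n,x]=[\alpha_{\iota(n)}(x),\alpha_{\iota(n)},x]$ respect the germ equivalences and are groupoid homomorphisms, injective because $\iota$ is injective and $\alpha_{\iota(n)}=\alpha_n$. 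That $\iota_1,\iota_2$ are homeomorphisms onto open subgroupoids follows from $\mathrm{supp}'(\iota(n))=\mathrm{supp}'(n)$, since the Weyl topologies are generated by exactly these bisections and an open bisection of $H$ is an open bisection of $G$. For the squares in (3) I would chase a germ: with $n\in N_{C^*_r(H,E)}(C_0(H^{(0)}))$ and $S=\mathrm{supp}'(n)$, one has $\phi'_2[\alpha_n(x),\alpha_n,x]=Sx$ computed in $H$, whereas $\phi_2\big(\iota_2[\alpha_n(x),\alpha_n,x]\big)=\phi_2[\alpha_{\iota(n)}(x),\alpha_{\iota(n)},x]=\big(\mathrm{supp}'(\iota(n))\big)x=Sx$ computed in $G$, and the two agree under $H\hookrightarrow G$; carrying along the scalar $n(Sy)/\sqrt{n^*n(y)}$ gives the commuting square for $\phi'_1,\phi_1$.

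The only essential external input is \cite[Lemma 3.4]{BL-2} --- that extension by zero is multiplicative and isometric for the reduced norms --- and the only mildly delicate internal point is verifying that $\iota_1,\iota_2$ are open embeddings and not merely injective homomorphisms. Everything else is powered by the single fact that $\iota$ restricts to the identity on the common diagonal, which forces $\alpha_{\iota(n)}=\alpha_n$ and $\mathrm{supp}'(\iota(n))=\mathrm{supp}'(n)$.
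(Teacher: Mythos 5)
Your proof is correct and follows essentially the same route as the paper's: the embedding $C^*_r(H,E)\hookrightarrow C^*_r(G,\Sigma)$ via extension by zero and \cite[Lemma 3.4]{BL-2}, the observation that $\iota$ is the identity on the common diagonal so that ${\rm dom}(\iota(n))={\rm dom}(n)$ and $\alpha_{\iota(n)}=\alpha_n$, and the support/bisection formulas from Proposition \ref{r-iso} to verify the commuting squares. You also correctly read the open-embedding hypothesis in the direction the conclusions require, and your claim ${\rm supp}'(\iota(n))={\rm supp}'(n)$ is a harmless strengthening of the support comparison the paper uses for the same purpose.
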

	\begin{proof}
		Since $(H,E) \subset^\circ (G,\Sigma)$,  the natural map $C_c(H,E) \rightarrow C_c(G,\Sigma)$ extends to an isometric homomorphism $: C^*_r(H,E) \hookrightarrow C^*_r(G,\Sigma)$, by  \cite[Lemma 3.4]{BL-2}.
		Since the  open support of an element in $C_c(H,E)$ is a subset of the open support of that  element in $C_c(G,\Sigma)$, and after identification,  $\iota|_{C_0(H^{(0)})} : C_0(H^{(0)}) \cong C_0(G^{(0)})$, which proves (1). 
		Put $A := C^*_r(G,\Sigma)$, $B := C^*_r(H,E)$, and $C:= C_0(G^{(0)}) \cong C_0(H^{(0)})$.
		For (2), let $[\alpha_n(x),\alpha_n,x] \in H(C_0(H^{(0)}))$, i.e.,  $n \in N_B(C)$, $x \in$ dom$(n)$. By  definition of  normalizers, $\iota(N_{B}(C)) = N_{A}(C) \cap \iota(B)$. On the other hand, since $\iota$ maps $C_0(H^{(0)})$ into $C_0(G^{(0)})$, dom$(\iota(n)) =$ dom$(n)$, thus $[\alpha_{\iota(n)}(x),\alpha_{\iota(n)},x] \in G(C_0(G^{(0)}))$. It is easy to  check that $\iota_2$ is a groupoid embedding, and the same for $\iota_1$.
		To prove (3), let $n \in N_B(C)$ and put $S_1 := {\rm supp}'(\iota(n)) \subseteq G$ and $S_2 = {\rm supp}'(n) \subseteq H$, then $\phi'_2([\alpha_{n}(x),\alpha_{n},x]) = S_1x$ and $\phi_2([\alpha_{\iota(n)}(x),\alpha_{\iota(n)},x]) = S_2x$. By the proof of part (1), $S_1 \subset S_2$, and   \cite[Proposition 4.7]{renault-1} implies that $S_1$ and $S_2$ are bisections with $S_1x = S_2x$,  for each $x$. Finally, 
		\begin{align*}
			\phi'_1[y,n,x] &=  (n(S_1x) / \sqrt{n^*n(x)} , S_1x)=(\iota(n)(S_2x) / \sqrt{\iota(n^*n)(x)} , S_2x)\\&=\phi_1[y,\iota(n),x],
		\end{align*} as required.
	\end{proof}
	
	\begin{lemma}\label{inv-subgroupoid}
		Let $B_i:= C^*_r(H_i,E_i) \subset C^*_r(G_i,\Sigma_i) := A_i$,  $i = 1,2$, where $(G_i,\Sigma_i)$ and $(H_i,E_i)$ are twisted topologically principal groupoids with $(H_i,E_i)\subset^\circ (G_i,\Sigma_i)$. Assume that there exists a Cartan invariant isomorphism $\psi : A_1 \rightarrow A_2$, mapping $B_1$ onto $B_2$. Then the isomorphism $(\phi',\phi) : (G_1,\Sigma_1) \rightarrow (G_2,\Sigma_2)$  induced by the Renault's canonical isomorphism (\ref{groupoid-renault-isomorphism}) maps $E_1$ and $H_1$ to $E_2$ and $H_2$, respectively, yielding a twisted groupoid isomorphism $(H_1,E_1) \cong (H_2,E_2)$.
	\end{lemma}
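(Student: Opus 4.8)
The plan is to recognise the Renault-type isomorphism $(\phi',\phi)$ attached to $\psi$ as an extension of the one attached to its restriction $\psi|_{B_1}$, thereby reducing the statement to the compatibility already recorded in Lemma~\ref{lemma-subgroupoid}. First I would isolate the common Cartan subalgebra: since $(H_i,E_i)\subset^\circ(G_i,\Sigma_i)$ forces $H_i^{(0)}=G_i^{(0)}$, Lemma~\ref{lemma-subgroupoid}(1) identifies $C_i:=C_0(G_i^{(0)})$ with $C_0(H_i^{(0)})$ inside $B_i\subseteq A_i$, and topological principality of $(H_i,E_i)$ makes $C_i$ a Cartan subalgebra of $B_i=C^*_r(H_i,E_i)$ as well as of $A_i=C^*_r(G_i,\Sigma_i)$. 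Cartan invariance of $\psi$ means $\psi(C_1)=C_2$, and by hypothesis $\psi(B_1)=B_2$; treating $B_i\hookrightarrow A_i$ as inclusions, the restriction $\psi|_{B_1}\colon B_1\to B_2$ is then again a Cartan invariant isomorphism of $(B_1,C_1)$ onto $(B_2,C_2)$, so $\psi$ and $\psi|_{B_1}$ produce, via~\eqref{groupoid-renault-isomorphism} and Proposition~\ref{r-iso}, twisted groupoid isomorphisms $(\phi',\phi)\colon(G_1,\Sigma_1)\to(G_2,\Sigma_2)$ and $(\beta',\beta)\colon(H_1,E_1)\to(H_2,E_2)$ respectively.

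Next I would show that the square
\[
\xymatrix{
(H_1,E_1) \ar[r]^-{(\beta',\beta)} \ar@{^{(}->}[d] & (H_2,E_2) \ar@{^{(}->}[d]\\
(G_1,\Sigma_1) \ar[r]^-{(\phi',\phi)} & (G_2,\Sigma_2)
}
\]
commutes, the vertical maps being the open inclusions. As in the proof of Lemma~\ref{lemma-subgroupoid}, $N_{B_i}(C_i)=N_{A_i}(C_i)\cap B_i$, whence $\psi(B_1)=B_2$ and $\psi(C_1)=C_2$ give $\psi(N_{B_1}(C_1))=N_{B_2}(C_2)$; moreover, for $n\in N_{B_1}(C_1)$ both $\text{dom}(n)$ and the partial homeomorphism $\alpha_n$ are unchanged whether $n$ is read in $B_1$ or in $A_1$. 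Hence, for such $n$ and $x\in\text{dom}(n)$, formula~\eqref{groupoid-renault-isomorphism} carries the point $[\alpha_n(x),\alpha_n,x]$ of $H_1(C_1)$ under $(\beta',\beta)$ to $[\alpha_{\psi(n)}(\psi(x)),\alpha_{\psi(n)},\psi(x)]$ in $H_2(C_2)$, and carries its image $\iota_2[\alpha_n(x),\alpha_n,x]$ in $G_1(C_1)$ under $(\phi',\phi)$ to the same point now regarded in $G_2(C_2)$; the same comparison holds for the twist coordinate $[\alpha_n(x),n,x]$. This makes the analogous square of Weyl groupoids commute, and transporting it along the commuting diagrams of Lemma~\ref{lemma-subgroupoid}(2),(3)---which match the embeddings $H_i(C_i)\hookrightarrow G_i(C_i)$ with the open inclusions $H_i\subset^\circ G_i$ through the isomorphisms of Proposition~\ref{r-iso}---produces the displayed square. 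Since, again by Lemma~\ref{lemma-subgroupoid}(3), the open inclusion $(H_i,E_i)\subset^\circ(G_i,\Sigma_i)$ is exactly the image of $H_i(C_i)\hookrightarrow G_i(C_i)$ under those isomorphisms, commutativity of the square forces $\phi(H_1)=H_2$ and $\phi'(E_1)=E_2$, and the restriction of $(\phi',\phi)$ to $(H_1,E_1)$ coincides with $(\beta',\beta)$, the asserted twisted groupoid isomorphism $(H_1,E_1)\cong(H_2,E_2)$.

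The main obstacle is organisational rather than conceptual: one must carry along three superimposed layers of identification---$G_i(C_i)$ versus $G_i$ (Proposition~\ref{r-iso}), $H_i(C_i)\hookrightarrow G_i(C_i)$ versus $H_i\subset^\circ G_i$ (Lemma~\ref{lemma-subgroupoid}), and $B_i\hookrightarrow A_i$---and verify, from the explicit normalizer formula~\eqref{groupoid-renault-isomorphism}, that restricting the $\psi$-induced isomorphism to the open subgroupoid reproduces the $\psi|_{B_1}$-induced one. Beyond this careful bookkeeping no new idea appears to be needed, the substantive content being exactly the functoriality encoded in Lemma~\ref{lemma-subgroupoid}.
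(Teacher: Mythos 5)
Your proposal is correct and follows essentially the same route as the paper: identify the common Cartan subalgebra $C_i$, use the normalizer identity $N_{B_i}(C_i)=N_{A_i}(C_i)\cap B_i$ together with Cartan invariance of $\psi$ and the explicit formula \eqref{groupoid-renault-isomorphism} to see that the $\psi$-induced Weyl-groupoid isomorphism carries $H_1(C_1)$, $E_1(C_1)$ onto $H_2(C_2)$, $E_2(C_2)$, and then transport this through the commuting diagrams of Proposition \ref{r-iso} and Lemma \ref{lemma-subgroupoid} to restrict $(\phi',\phi)$ to $(H_1,E_1)\cong(H_2,E_2)$. The only blemish is the harmless label swap at the end ($\phi$ acts on the twists, $\phi'$ on the groupoids, so the conclusion should read $\phi'(H_1)=H_2$ and $\phi(E_1)=E_2$); your extra packaging of the restriction as the isomorphism $(\beta',\beta)$ induced by $\psi|_{B_1}$ is a presentational variant, not a different argument.
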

	\begin{proof}
		Let us identify $C_i := C_0(G_i^{(0)})$ with $C_0(H_i^{(0)})$, for $i = 1,2$. Let $(\phi_i,\phi_i') : (G_i(C_i),\Sigma_i(C_i)) \rightarrow (G_i,\Sigma_i)$ be the isomorphism defined in Proposition \ref{r-iso}, and $(\omega', \omega) : (G_1(C_1),\Sigma_1(C_1)) \rightarrow (G_2(C_2),\Sigma_2(C_2))$ be as in (\ref{groupoid-renault-isomorphism}). We have commutative diagrams,
		\begin{align*}
			\xymatrix{
				\Sigma_1(C_1)  \ar[d]_-{\phi_1'}  \ar[r]^{\omega}& \Sigma_2(C_2) \ar[d]^{\phi_2} \\
				\Sigma_1 \ar[r]_{\phi} & \Sigma_2
			}
			&&
			\xymatrix{
				G_1(C_1)  \ar[d]_-{\phi'_1}  \ar[r]^{\omega'}& G_2(C_2) \ar[d]^{\phi'_2} \\
				G_1 \ar[r]_{\phi'} & G_2
			}
		\end{align*}
		By Lemma \ref{lemma-subgroupoid}, we also have the following commutative diagrams,
		\begin{align*}
			\xymatrix{
				E_i(C_i)  \ar[d]_-{\phi_i}  \ar@{^{(}->}[r]^{\iota_i} & \Sigma_i(C_i) \ar[d]^{\phi_i} \\
				E_i  \ar@{^{(}->}[r] & \Sigma_i
			}
			&&
			\xymatrix{
				H_i(C_i)  \ar[d]_-{\phi'_i}  \ar@{^{(}->}[r]^{\iota'_i} & G_i(C_i) 	\ar[d]^{\phi'_i} \\
				H_i  \ar@{^{(}->}[r] & G_i
			}
		\end{align*}
		It is enough to observe that $\omega(E_1(C_1)) = E_2(C_2)$ and $\omega'(H_1(C_1)) = H_2(C_2)$. Let $n \in N_{B_1}(C_1) = N_A(C_1) \cap B_1$ and $x \in H_1^{(0)} = G_1^{(0)}$ with $x \in$ dom$(n)$, then $[\alpha_n(x),n,x] \in E_1(C_1)$, and by the paragraph before  Definition \ref{cartanpair}, we have,
		$
		\omega([\alpha_n(x),n,x]) =[\psi(\alpha_n(x)),\psi(n),\psi(x)] = [\alpha_{\psi(n)}(\psi(x)),\psi(n),\psi(x)].
		$
		Since $\psi$ is Cartan invariant, $\psi(n) \in N_{B_2}(C_2)$ and $\psi(x) \in G_2^{(0)}$, and in particular,  $[\alpha_{\psi(n)}(\psi(x)),\psi(n),\psi(x)] \in E_2(C_2)$. Similarly, $\omega'(H_1(C_1)) = H_2(C_2)$. As the following diagram commutes, 
			\[
		\xymatrix{
			\mathbb{T} \times H^{(0)}_1 \ar[d] \ar@{=}[r] & \mathbb{T} \times G^{(0)}_1  \ar@{=}[r] \ar[d] & \mathbb{T} \times G^{(0)}_2 \ar@{=}[r] \ar[d] & \mathbb{T} \times H^{(0)}_2 \ar[d] \\
			E_1 \ar@{^{(}->}[r] \ar[d] & \Sigma_1 \ar[r]^{\phi} \ar[d] & \Sigma_2  \ar[d] & E_2 \ar@{_{(}->}[l] \ar[d]\\
			H_1 \ar@{^{(}->}[r] & G_1  \ar[r]^{\phi'} & G_2 & H_2 \ar@{_{(}->}[l]
		}
		\]
		the restrictions  $\phi|_{E_1}$ and $\phi'|_{H_1}$ induce an isomorphism:  $(H_1,E_1)\rightarrow (H_2,E_2)$.
	\end{proof}
	
		\begin{lemma}\label{lemma-crossed-product-topologically-principal}
		Let $(A,B)$ be a Cartan pair, $\Gamma \curvearrowright A$ be a Cartan invariant action, and $\Gamma \curvearrowright (G(B),\Sigma(B))$ be as  in (\ref{group action on groupoid}). Then, for $X := \hat{B}$, the following are equivalent:
		\begin{enumerate}
			\item $\Gamma \ltimes G(B)$ is topologically principal,
			\item  $\beta : \Gamma \curvearrowright G(B)$ is topologically principally free,
			\item the elements $x \in X$ with the following property form a dense subset of $X$:  $
				\alpha_n(x) = \gamma  x$ with  $n \in N_A(B)$ and $x \in {\rm dom}(n)$ holds only 
			when $\gamma$ is the identity of $\Gamma$ and $n \in B$,
			
			\item  The elements $x \in X$ with the following property form a dense subset of $X$:   $
				n^* (\gamma  b) n(x) = b(x) n^*n(x)$  with $ n \in N_A(B)$ and $ x \in {\rm dom}(n)$, for all $b \in B$,
			holds only 
			when $\gamma$ is the identity of $\Gamma$ and $n \in B$.
		\end{enumerate}
	\end{lemma}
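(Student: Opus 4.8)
The plan is to prove the chain of equivalences in the cyclic order $(1)\Leftrightarrow(2)$, $(3)\Leftrightarrow(4)$, and then $(2)\Leftrightarrow(3)$, each of which is essentially an unwinding of definitions using the explicit description of the Weyl groupoid $G(B)$ and its twist $\Sigma(B)$ given earlier in Section~\ref{2}. Throughout I identify $B\cong C_0(G(B)^{(0)})$ via $X=\hat B$, so that a point of $G(B)^{(0)}$ is of the form $[x,\alpha_b,x]$ with $b\in B$, $x\in\operatorname{supp}(b)$, and a general element of $G(B)$ is $[\alpha_n(y),\alpha_n,y]$ with $n\in N_A(B)$, $y\in\operatorname{dom}(n)$.

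First, $(1)\Leftrightarrow(2)$ is exactly the fact, recalled just after Definition~\ref{def-topologically-prinipally-free}, that $\Gamma\ltimes G(B)$ is topologically principal if and only if the action $\beta:\Gamma\curvearrowright G(B)$ is topologically principally free; I would simply invoke it (and note that both conditions require $G(B)$ itself to be topologically principal, which is automatic for a Weyl groupoid by Proposition~\ref{Cartan-groupoid}). Next, $(2)\Leftrightarrow(3)$: by the definition of topologically principal freeness, $(2)$ says the set of $u\in G(B)^{(0)}$ such that $\gamma\,r(g)=s(g)$ for $g\in G(B)_u$ forces $\gamma=e$ and $g\in G(B)^{(0)}$ is dense. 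Writing $u=[x,\alpha_b,x]$, an element $g\in G(B)_u$ is $g=[\alpha_n(x),\alpha_n,x]$ for some $n\in N_A(B)$ with $x\in\operatorname{dom}(n)$; the condition $\gamma\,r(g)=s(g)$ translates, under the action formula $\gamma[\alpha_n(x),\alpha_n,x]=[\gamma\alpha_n(x),\alpha_{\gamma n},\gamma x]$ and the identification of units with points of $X$, precisely to $\gamma\cdot\alpha_n(x)=x$, i.e.\ $\alpha_n(x)=\gamma^{-1}x$ — so after replacing $\gamma$ by $\gamma^{-1}$ this is condition $(3)$, and the conclusion ``$\gamma=e$ and $g\in G(B)^{(0)}$'' is the same as ``$\gamma=e$ and $n\in B$'' since $[\alpha_n(x),\alpha_n,x]$ is a unit near $x$ exactly when $\alpha_n$ is locally the identity, equivalently (by the equivalence relation defining $G(B)$ together with \cite[Proposition 4.7]{renault-1}) when $n\in B$ on a neighbourhood of $x$. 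Finally $(3)\Leftrightarrow(4)$ is immediate from the intertwining identity $n^*bn(x)=b(\alpha_n(x))\,n^*n(x)$ recalled in the Weyl-groupoid construction (applied with $b$ replaced by $\gamma b$ on the left side and keeping in mind $\gamma$ acts on $X$, so $(\gamma b)(\alpha_n(x))=b(\gamma^{-1}\alpha_n(x))$): since $x\in\operatorname{dom}(n)$ gives $n^*n(x)>0$, the equation $n^*(\gamma b)n(x)=b(x)n^*n(x)$ holding for all $b\in B$ is equivalent to $b(\gamma^{-1}\alpha_n(x))=b(x)$ for all $b\in B$, i.e.\ $\gamma^{-1}\alpha_n(x)=x$, which is condition $(3)$; and the side condition $n\in B$ is unchanged.

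The main obstacle I anticipate is being careful with the several identifications and with the direction of the $\Gamma$-action (units versus the groupoid action, $\gamma$ versus $\gamma^{-1}$, and the $r$- versus $s$-leg of $g$): the equivalence $G(B)_u^u=\{u\}$ being expressed through normalizers requires knowing that $[\alpha_n(x),\alpha_n,x]$ is a unit germ iff $n$ is, locally, a normalizer lying in $B$, which I would extract from the germ equivalence relation in the definition of $G(B)$ and from the support/bisection statement of \cite[Proposition 4.7]{renault-1}. Once these bookkeeping points are pinned down, each implication is a one-line translation, and no genuine analysis is needed beyond what is already in the excerpt.
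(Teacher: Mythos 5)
Your proposal is correct and follows essentially the same route as the paper: $(1)\Leftrightarrow(2)$ by the remark after Definition \ref{def-topologically-prinipally-free}, $(2)\Leftrightarrow(3)$ by translating $\gamma\, r(g)=s(g)$ into $\alpha_n(x)=\gamma^{-1}x$ via the germ description of $G(B)$ and the identification of units with (germs of) elements of $B$, and $(3)\Leftrightarrow(4)$ via the identity $n^*(\gamma b)n(x)=b(\gamma^{-1}\alpha_n(x))\,n^*n(x)$ together with $n^*n(x)>0$ and the fact that $B$ separates points of $X$. The only differences are cosmetic (order of the equivalences and your slightly more explicit handling of the $\gamma$ versus $\gamma^{-1}$ and unit-germ bookkeeping), so no further changes are needed.
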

	\begin{proof}
		$(1)$ and $(2)$ are trivially equivalent. For $g = [\alpha_n(x),\alpha_n,x] \in G(B)$, $\gamma  r(g) = s(g)$ is equivalent to $\gamma  \alpha_n(x) = x$. By the definition of $G(B)$, $g$ is in the unit space if $n \in B$. Thus $x \in U_\beta$ simply means that $\alpha_n(x) = \gamma^{-1}  x$ holds only when $\gamma$ is the identity of $\Gamma$ and $n \in B$, for  $n \in N_A(B)$ with $x \in$ dom$(n)$. Thus $(2)$ and $(3)$ are equivalent.
		To show the equivalence of $(3)$ and $(4)$, it is enough to observe that $n^* (\gamma  b) n(x) = b(x) n^*n(x)$, for  $b \in B$, if and only if  $\alpha_n(x) = \gamma^{-1}  x$, for $n \in N_A(B)$ with $x \in$ dom$(n)$. For this, let $n \in N_A(B)$, $x \in$ dom$(n)$ and $\gamma \in \Gamma$ such that $n^* (\gamma  b) n(x) = b(x) n^*n(x)$, for  $b \in B$, then,
		\[
		b(\gamma^{-1}  \alpha_n(x)) n^*n(x) = (\gamma  b)(\alpha_n(x)) n^*n(x) = n^* (\gamma  b) n(x)  = b(x) n^*n(x),  
		\]
		for  $b \in B$. Since $x \in$ dom$(n)$ and $b$ is arbitrary, $\gamma^{-1}  \alpha_n(x) = x$. Conversely, if $\gamma^{-1}  \alpha_n(x) = x$, then
		$
		b(x) n^*n(x) = b(\gamma^{-1}  \alpha_n(x))n^*n(x) = n^*(\gamma  b)n(x)
		$.
	\end{proof}

\begin{definition}
	A Cartan invariant action $\Gamma \curvearrowright A$ is called topologically principally free if the condition (4) above holds.	
\end{definition}
	
		\begin{proof}[Proof of Theorem \ref{theorem-1}]
		$(1) \Leftrightarrow (2)$: This follows directly from Definition \ref{def-coe-twisted} and Theorem \ref{theorem-orbit-equivalence-groupoid-products}.
		
		$(2) \Rightarrow (3)$: The map $\psi$ lifts to $\theta' : C_c(\Gamma_1 \ltimes G_1,\Gamma_1 \ltimes \Sigma_1) \rightarrow C_c(\Gamma_2 \ltimes G_2 , \Gamma_2 \ltimes \Sigma_2)$, defined by $\theta'(f)(\gamma_2,\sigma_2) = f(\psi^{-1}(\gamma_2,\sigma_2)) $. Using open inclusions of the  paragraph   after Definition \ref{def-topologically-prinipally-free}, one can see that $\theta'$ maps  $C_c(G_1^{(0)}, \mathbb{T} \times G_1^{(0)})$ and $C_c(G_1,\Sigma_1)$ onto $C_c(G_2^{(0)}, \mathbb{T} \times G_2^{(0)})$ and $C_c(G_2,\Sigma_2)$, and $C_c(\Gamma_1 \ltimes G_1^{(0)},\mathbb{T}\times(\Gamma_1 \ltimes G_1^{(0)}))$ onto $C_c(\Gamma_2 \ltimes G_2^{(0)},\mathbb{T}\times(\Gamma_2 \ltimes G_2^{(0)}))$, respectively. This  induces an isomorphism $$\theta : C^*_r(\Gamma_1\ltimes G_1 , \Gamma_1 \ltimes \Sigma_1) \rightarrow C^*_r(\Gamma_2 \ltimes G_2 , \Gamma_2 \ltimes \Sigma_2).$$ The embedding of trivial twist of $\Gamma_i \ltimes G_i^{(0)}$ in $(\Gamma_i\ltimes G_i, \Gamma_i \ltimes \Sigma_i)$ is an open embedding, $(G_i,\Sigma_i) \subset^{\circ} (\Gamma_i\ltimes G_i, \Gamma_i \ltimes \Sigma_i)$, and $(e_i,G_i^{(0)}) \subset^{\circ} (\Gamma_i\ltimes G_i, \Gamma_i \ltimes \Sigma_i)$, for $i = 1,2$. By the proof of \cite[Lemma 3.4]{BL-2}, given $f_i \in C_c(G_i^{(0)},\mathbb{T} \times G_i^{(0)})$, $f'_i \in C_c(\Gamma_i \ltimes G_i^{(0)}, \mathbb{T} \times \Gamma_i \ltimes G_i^{(0)})$, and $f''_i \in C_c(G_i,\Sigma_i)$, we have, $||f_i||_{C_0(G_i^{(0)})} = ||f_i||_{C^*_r(\Gamma_i\ltimes G_i, \Gamma_i \ltimes \Sigma_i)}$, $||f'_i||_{C^*_r(\Gamma_i \ltimes G_i^{(0)})} = ||f'_i||_{C^*_r(\Gamma_i\ltimes G_i, \Gamma_i \ltimes \Sigma_i)}$, and $||f''_i||_{C^*_r(G_i,\Sigma_i)} = ||f''_i||_{C^*_r(\Gamma_i\ltimes G_i, \Gamma_i \ltimes \Sigma_i)}$. Therefore, $\theta$ maps $C_0(G_1^{(0)})$, $\Gamma_1 \ltimes_r C^*_r(G_1^{(0)})=C^*_r(\Gamma_1 \ltimes G_1^{(0)})$,  and $C^*_r(G_1,\Sigma_1)$ onto $C_0(G_2^{(0)})$, $\Gamma_2 \ltimes_r C_0(G_2^{(0)})$,  and $C^*_r(G_2,\Sigma_2)$, respectively.
		
		$(3) \Rightarrow (2)$:  Put $A_i := C^*_r(\Gamma_i \ltimes G_i , \Gamma_i \ltimes \Sigma_i)$, $B_i := C^*(G_i,\Sigma_i)$, $C_i := C_0(G_i^{(0)})$, and $\Gamma_i \ltimes_r C_0(G_i^{(0)}) \cong C^*_r(\Gamma_i \ltimes G_i^{(0)})=: D_i$, for $i = 1,2$. Then, there exists an isomorphism $A_1 \cong A_2$,  mapping $B_1,C_1$, and $ D_1$ onto $B_2,C_2$, and $D_2$, respectively. The additional condition ensures that the groupoids $\Gamma_1 \ltimes G_1$ and $\Gamma_1 \ltimes G_1$ are topologically principal, and by Proposition \ref{r-iso},  there exists a twisted groupoid isomorphism $(\psi',\psi) : (\Gamma_1 \ltimes G_1, \Gamma_1 \ltimes \Sigma_1) \rightarrow (\Gamma_2 \ltimes G_2, \Gamma_2 \ltimes \Sigma_2)$. Since $(G_i,\Sigma_i) \subset^{\circ} (\Gamma_i\ltimes G_i, \Gamma_i \ltimes \Sigma_i)$,  by Lemma \ref{inv-subgroupoid}, $\psi'(e_1,G_1) = (e_2,G_2)$, and $\psi(e_1,\Sigma_1) = (e_2,\Sigma_2)$, yielding  a twisted groupoid isomorphism $(G_1,\Sigma_1) \cong (G_2,\Sigma_2)$.
		Define $\phi' := \psi'|_{(e_1,G_1)}$ and $\phi := \psi|_{(id,\Sigma_1)}$, and we have twisted groupoid ismomorphism $(\phi',\phi) : (G_1,\Sigma_1) \rightarrow (G_2,\Sigma_2)$. Finally, since $\Gamma_i \ltimes G_i^{(0)} \subset^\circ (\Gamma_i\ltimes G_i, \Gamma_i \ltimes \Sigma_i)$, $\psi$ maps $(\Gamma_1, G_1^{(0)})$ onto  $(\Gamma_2, G_2^{(0)})$.
		\end{proof}
	
		Given a Cartan pair $(A,B)=:(A, C_0(X))$, a Cartan invariant action $\Gamma\curvearrowright (A,B)$ is called topologically free if the induced action $\Gamma\curvearrowright X$ is topologically free in the classical sense. Now to see that Corollary \ref{main-corollary} follows from Proposition \ref{Cartan-groupoid} and Theorem \ref{theorem-1}, simply observe that by Proposition \ref{proposition-cartan-crossed-product}, statement (3) of Theorem \ref{theorem-1} could be rephrased as the requirement that there exists an isomorphism $\theta :  \Gamma_1 \ltimes_r C^*_r(G_1,\Sigma_1) \cong \Gamma_2 \ltimes_r C^*_r( G_2,\Sigma_2)$ with,
			\begin{itemize}
				\item $\theta(C_0(G_1^{(0)})) = C_0(G_2^{(0)}), $
				\item $\theta(\Gamma_1 \ltimes_r C_0(G_1^{(0)})) = \Gamma_2 \ltimes_r  C_0(G_2^{(0)}),$
				\item  $\theta(C^*_r(G_1,\Sigma_1)) = C^*_r(G_2,\Sigma_2).$
			\end{itemize}

	\begin{proof}[Proof of Theorem \ref{proposition-unitspace-oe}]
		Put $C_i := C_0(G_i^{(0)})$, $i = 1,2,$ and let, $$(\theta,\theta') : (G_1(C_1),\Sigma(C_1)) \rightarrow (G_2(C_2),\Sigma_2(C_2))$$ be as in (\ref{groupoid-renault-isomorphism}).
		Since the actions are Cartan invariant,  $\Gamma_i$ acts on $(G_i(C_i),\Sigma_i(C_i))$ by  (\ref{group action on groupoid}). Let $a : \Gamma_1 \times G_1^{(0)} \rightarrow \Gamma_2$ and $b : \Gamma_2 \times G_2^{(0)} \rightarrow \Gamma_1$ be continuous maps yielding  $\Gamma_1 \curvearrowright G_1^{(0)} \sim_{\rm coe} \Gamma_2 \curvearrowright G_2^{(0)}$.
		Then,
		$
			\theta(\gamma  [y , \alpha_{n} , x]) = a(\gamma , x)  \theta([y , \alpha_{n} , x]),$
		for  $\gamma \in \Gamma_1$, $\gamma' \in \Gamma_2$, $ [y , \alpha_{n} , x] \in G_1(C_1)$, and $[y' , \alpha_{n'} , x'] \in G_2(C_2)$.
		By continuity of $a$, there exists an open neighborhood $U \subset G_1^{(0)}$ of $x$ such that $a(\gamma,u) = a(\gamma,x)$, for  $u \in U$. By assumption,
		\begin{align*}\label{a-fact}
			\begin{split}
				a(\gamma,x) &= a(\gamma,u) = a(\gamma,s([\alpha_n(u) , \alpha_n , u]) ) = a(\gamma,r([\alpha_n(u) , \alpha_n , u]) ) = a(\gamma,\alpha_n(u)),
			\end{split}
		\end{align*}
		and since the actions preserve Cartan subalgebras, $\gamma_1  n_1$ and $\gamma_2  n_2$ are normalizers in $C^*_r(G_1,\Sigma_1)$ and $(G_2,\Sigma_2)$, respectively, for $\gamma_i \in \Gamma_i$ and normalizers $n_i$ in $C^*_r(G_i,\Sigma_i)$, $i=1,2$. For $x \in G_1^{(0)}$,
\[
			(a(\gamma,x) \psi(n^*n))(\psi(\gamma  x)) = \psi(n^*n)(\psi(x)) = n^*n(x).
\]
		Thus, dom$(a(\gamma,x) \psi(n)) = \psi($dom$(n))$.
		Therefore,
		\begin{align*}
			\alpha_{a(\gamma,x) \psi(n)} (\psi(\gamma u)) &= \alpha_{a(\gamma,x) \psi(n)} (a(\gamma,x) \psi(u)) = a(\gamma,x) \alpha_{\psi(n)} (\psi(u)) \\&= a(\gamma , \alpha_n(u)) \psi(\alpha_{n} (u)) = \psi(\gamma \alpha_{n} (u)) \\&=  \psi(\alpha_{\gamma n} (\gamma u)) = \alpha_{\psi(\gamma n)} (\psi (\gamma u)),
		\end{align*}
		i.e., $\alpha_{a(\gamma,x) \psi(n)} |_{\psi(\gamma  U)} = \alpha_{\psi(\gamma n)}|_{\psi(\gamma  U)}$. Finally, by the paragraph  before Definition \ref{cartanpair},
$
			\theta(\gamma  [y , \alpha_{n} , x]) = [\psi(\gamma y) , \alpha_{\psi(\gamma n)} , a(\gamma,x) \psi(x)] 
			= a(\gamma , x)  \theta([y , \alpha_{n} , x]). 
$
		Similar equality holds for $\theta^{-1}$ and $b$. 
	\end{proof}
	
\noindent Finally, by Lemma \ref{topologcally free lemma} and Theorem \ref{proposition-unitspace-oe}, we get Corollary \ref{main-corollary-unit-space}.

\section{Examples} \label{4}

In this section, we give examples of orbit equivalence in our sense which give rise to non classical orbit equivalent actions. 
Let us first see what we mean by a classical action. For a continuous  action of a discrete group $G$ on a locally compact, Hausdorff  space  $X$, we might regard $X$ as a groupoid identified with the diagonal $\Delta_X\subseteq X\times X$, then $C^*_r(\Delta_X)=C_0(X)$ with Cartan subalgebra being $C_0(X)$ itself. Conversely, a Cartan invariant  action $\Gamma\curvearrowright (A,B)$ is classical if $A=B$, in which case $X$ would be the spectrum of the commutative $C^*$-algebras $A$. In this sense, an action $\Gamma\curvearrowright (G,\Sigma)$  is non classical whenever $C^*_r(G,\Sigma)$ is non abelian.

Our strategy  to build  non classical examples is to start with a classical instance of continuous orbit equivalence with cocycle maps $a: \Gamma \times X \rightarrow \Lambda$, $b : \Lambda \times Y \rightarrow \Gamma$, and given $\gamma\in\Gamma$, choose a (the largest) open neighborhood (depending on $\gamma$) of given point $x\in X$ on which $a(\gamma, )$ is constant, and the same for $b$ and points in $Y$, and use these to build  equivalence relations on $X$ and $Y$. We then find canonical actions on these equivalence relations, which are topologically free and orbit equivalent. The main challenge is that though these equivalence relations are topologically principal (and usually amenable), they are rarely \'{e}tale. The good news is that since the actions are Cartan invariant in the level of $C^*$-algebras, by Kumjian-Renault theory, there are topologically principal, \'{e}tale (twisted) groupoids giving the same Cartan pairs. Therefore, in concrete examples the real challenge is to find these \'{e}tale realizations and find their isomorphic copies inside the groupoid $C^*$-algebras involved. We illustrate how to handle this in some concrete examples, by explicitly calculating the corresponding groupoid $C^*$-algebras and \'{e}tale realizations of the involved equivalence relations. The next lemma gives a sufficient condition for conjugacy of actions on groupoids.
\begin{lemma}\label{lemma lifted action}
	Each  continuous  action $\Gamma \curvearrowright G$ on a (not necessarily \'{e}tale) groupoid induces an action $\Gamma \curvearrowright C^*_r(G)$, which is also Cartan invariant if $G$ is topologically principle and \'{e}tale. In the latter  case, for any invariant Cartan subalgebra $B$ of $C^*_r(G)$, the induced action $\Gamma \curvearrowright G(B)$ is conjugate to $\Gamma \curvearrowright G$. 
\end{lemma}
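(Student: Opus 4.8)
The plan is to prove the three assertions in order. First, for an arbitrary continuous action $\Gamma \curvearrowright G$ on a (not necessarily étale) topological groupoid, I would observe that each groupoid automorphism $\phi : G \to G$ induces a $*$-automorphism of $C_c(G)$ by $(\phi_* f)(g) = f(\phi^{-1}(g))$; here one uses that $\phi$ preserves the unit space, the range and source maps, and (in the non-étale case) the Haar system up to the canonical identification, so that the convolution product and involution are preserved. Continuity of the action and the estimate $\|\phi_* f\|_r = \|f\|_r$ coming from the unitary implementation $\xi \mapsto \xi \circ \phi^{-1}$ of the regular representations $\mathrm{Ind}\,\delta_u \mapsto \mathrm{Ind}\,\delta_{\phi(u)}$ then let this extend to a point-norm continuous action $\Gamma \curvearrowright C^*_r(G)$.

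Second, assume $G$ is topologically principal and étale. Then $C_0(G^{(0)})$ is a Cartan subalgebra of $C^*_r(G)$ by Renault's theorem (the trivial-twist case of Proposition \ref{Cartan-groupoid} together with Proposition \ref{r-iso}), and the induced action fixes $C_0(G^{(0)})$ setwise because each $\phi$ restricts to a homeomorphism of $G^{(0)}$; hence the action is Cartan invariant with respect to this particular Cartan subalgebra. This already gives the ``Cartan invariant'' clause of the statement (with the understanding that it refers to $C_0(G^{(0)})$).

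Third, and this is the substantive part, fix an \emph{arbitrary} invariant Cartan subalgebra $B \subseteq C^*_r(G)$ and form the Weyl groupoid $G(B)$. By Proposition \ref{r-iso} applied to the pair $\big(C^*_r(G), C_0(G^{(0)})\big)$ there is a canonical isomorphism $G(C_0(G^{(0)})) \cong G$; more to the point, since $B$ is another Cartan subalgebra of the same algebra, the identity map of $C^*_r(G)$ is a ($C_0(G^{(0)})$-to-$B$, generally \emph{not} Cartan-invariant) isomorphism and yields a twisted-groupoid isomorphism $G(B) \cong G(C_0(G^{(0)})) \cong G$ via the Renault correspondence --- but what we must track is equivariance. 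Since $B$ is $\Gamma$-invariant, each $\phi \in \Gamma$ carries $N_{C^*_r(G)}(B)$ to itself and, by the computation $\alpha_{\phi(n)}(\phi(x)) = \phi(\alpha_n(x))$ established in the paragraph preceding Definition \ref{cartanpair} (with $\psi$ there taken to be $\phi_*$), induces exactly the action $\gamma[x,\alpha_n,y] = [\gamma x, \alpha_{\gamma n}, \gamma y]$ of \eqref{group action on groupoid} on $G(B)$. Chasing this through the canonical identification $G(B) \cong G$ shows the two $\Gamma$-actions correspond, i.e. the induced action $\Gamma \curvearrowright G(B)$ is conjugate to $\Gamma \curvearrowright G$ in the sense of Definition \ref{groupoid oe}.

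The main obstacle I expect is the third step: one must verify that the Renault isomorphism $G(B) \cong G$ intertwines the two actions, which requires unwinding the definition of $G(B)$ in terms of germs of partial homeomorphisms $\alpha_n$ and checking naturality of the assignment $n \mapsto \alpha_n$ under the automorphisms $\phi_*$. The key identity is $\phi_*(n)^* b \, \phi_*(n) = \phi_*\big(n^* \phi_*^{-1}(b) n\big)$ for $b \in B$, which forces $\alpha_{\phi_*(n)} = \phi \circ \alpha_n \circ \phi^{-1}$ on domains; everything else (that conjugacy holds, not merely continuous orbit equivalence) is then a matter of bookkeeping with germ classes, using that $\phi$ acts by honest homeomorphisms on $G^{(0)} = \widehat{B}$ so the ``cocycle'' is constant in the space variable.
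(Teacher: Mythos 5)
Your first two steps coincide with the paper's proof: the action $\gamma f(g)=f(\gamma^{-1}g)$ on $C_c(G)$, the isometry in the reduced norm checked through the representations ${\rm Ind}\,\delta_u$, and Cartan invariance of $C_0(G^{(0)})$ when $G$ is topologically principal and \'{e}tale. The genuine gap is in your third step, at the sentence where you say that the identity map of $C^*_r(G)$, although ``generally not Cartan-invariant'', still ``yields a twisted-groupoid isomorphism $G(B)\cong G(C_0(G^{(0)}))\cong G$ via the Renault correspondence''. It does not. The Weyl groupoid is an invariant of the Cartan \emph{pair}, and the Kumjian--Renault construction is functorial only under isomorphisms that carry one Cartan subalgebra onto the other; the identity map carries $C_0(G^{(0)})$ onto $B$ only when $B=C_0(G^{(0)})$. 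Distinct Cartan subalgebras of the same $C^*$-algebra can have non-isomorphic Weyl groupoids -- this is exactly why Cartan data are used as rigidity invariants throughout the paper -- so the identification $G(B)\cong G$ on which your entire third step rests is unjustified for a general invariant Cartan subalgebra, and with it the conjugacy claim collapses. (The conjugacy statement is only meaningful once one has an equivariant identification of the pair $\big(C^*_r(G),B\big)$ with $\big(C^*_r(G),C_0(G^{(0)})\big)$, which is an extra hypothesis, not a consequence of $B$ being some Cartan subalgebra of the same algebra.)

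By contrast, the paper's proof works with the explicit isomorphism $\phi_2$ of Proposition \ref{r-iso}, where normalizers of $C_0(G^{(0)})$ are realized as sections over $G$ whose open supports are bisections, $\phi_2[y,\alpha_n,x]={\rm supp}'(n)\,x$. The equivariance that you defer to ``bookkeeping with germ classes'' (and flag as the main obstacle) is then a one-line computation: writing $S_n={\rm supp}'(n)$ one has ${\rm supp}'(\gamma n)=\gamma\,{\rm supp}'(n)$, hence $\phi_2(\gamma[x,\alpha_n,y])=\phi_2[\gamma x,\alpha_{\gamma n},\gamma y]=S_{\gamma n}(\gamma x)=\gamma\,\phi_2[x,\alpha_n,y]$, which is the concrete form of your key identity $\alpha_{\gamma n}=\gamma\circ\alpha_n\circ\gamma^{-1}$ (established in the paper just after Proposition \ref{proposition-cartan-crossed-product}). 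So your identification of the key identity is right, but to repair the argument you should either restrict $B$ to the canonical Cartan subalgebra $C_0(G^{(0)})$ (which is what the support formula presupposes) or supply an equivariant Cartan-pair isomorphism relating $B$ to $C_0(G^{(0)})$; appealing to the Renault correspondence through a non-Cartan-preserving map is not a valid step.
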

\begin{proof}
	Let us define $\gamma f(g) := f(\gamma^{-1}  g)$, for $\gamma \in \Gamma, g\in G$, and  $f \in C_c(G)$. For $f, h 
	\in  C_c(G) $ observe that,
	\begin{align*}
	(\gamma f) * (\gamma h) (g) &= \int_G (\gamma f)(\eta) (\gamma h)(\eta^{-1}g) d\lambda^{r(g)}(\eta) \\&= \int_G f(\gamma^{-1} \eta) h((\gamma^{-1}\eta^{-1})(\gamma^{-1} g)) d\lambda^{r(g)}(\eta)\\&= \int_G f(\eta) h(\eta^{-1}(\gamma^{-1} g)) d\lambda^{r(\gamma^{-1} g)}(\eta) \\&= f*h(\gamma^{-1} g) = \gamma(f*h)(g), 
	\end{align*}
	for $g\in G$. Next let us observe that the above action is bounded (indeed an isometry) in the reduced norm, defined by
	$\|f\|_r:=\sup_{u\in G^{(0)}}\|{\rm Ind}\delta_u(f)\|$, where
	$${\rm Ind}\delta_u(f)\xi(g):=\int_G f(\eta)\xi(\eta^{-1}g)d\lambda^{r(g)}(\eta),$$
	for $g\in G_u$ and $\xi\in C_c(G_u)$. By a change of variable similar to the above calculation, 
	\begin{align*}
	{\rm Ind}\delta_u(\gamma f)\xi(g)={\rm Ind}\delta_u(f)(\gamma^{-1}\xi)(\gamma^{-1} g)=\gamma {\rm Ind}\delta_u(f)(\gamma^{-1}\xi)(g),
	\end{align*}
	thus regarding $\xi$ as an element in  $L^2(G^u, \lambda^u)$, 
	$$\|{\rm Ind}\delta_u(\gamma f)\xi\|_2=\|\gamma {\rm Ind}\delta_u(f)(\gamma^{-1}\xi)\|_2=\|{\rm Ind}\delta_u(f)(\gamma^{-1}\xi)\|_2\leq \|{\rm Ind}\delta_u(f)\|\|\xi\|_2,$$ which by a density argument gives $\|{\rm Ind}\delta_u(\gamma f)\|\leq\|{\rm Ind}\delta_u(f)\|$, leading to an equality by the same argument for $\gamma$ replaced by $\gamma^{-1}$. We may thus extend this to an action on the $C^*$-algebra $C^*_r(G)$. When the groupoid is moreover topologically principle and \'{e}tale, this latter action is also Cartan invariant, since the action $\Gamma \curvearrowright G$ keeps $G^{(0)}$ invariant and the Cartan subalgebra is nothing but $C_0(G^{(0)})$. More generally, for any Cartan subalgebra $B$ of $C^*_r(G,)$, let $G(B)$ be the corresponding Weyl groupoid, $\phi_2$ be as it defined in Proposition \ref{r-iso} then for $[x,\alpha_n,y] \in G(B)$, 
	\begin{align*}
	\phi_2(\gamma[x,\alpha_n,y]) = \phi_2[\gamma x,\alpha_{\gamma n},\gamma y] = S_{\gamma n}(\gamma x) = \gamma   \phi_2[x,\alpha_n,y].
	\end{align*}
	This simply means that the induced action on Renault-Kumjian groupoid is conjugate to the original action, as claimed.
\end{proof}

Let $\Gamma \curvearrowright X $ and $ \Lambda \curvearrowright Y$ be topologically free actions of discrete groups, with respect to maps $a: \Gamma \times X \rightarrow \Lambda$, $b : \Lambda \times Y \rightarrow \Gamma$ and homeomorphism $\phi : X \rightarrow Y$.
For $\gamma \in \Gamma$ , $\lambda \in \Lambda$, and $x \in X$ (resp.,  $y \in Y$),  define $U_x^\gamma$ (resp., $V_y^\lambda$) to be the open neighbourhood of $x$ (resp., $y$) that $a(\gamma , U_x^{\gamma}) = a(\gamma,x)$ (resp., $b(\lambda , V_y^{\lambda}) = b(\lambda,y)$). In another words, $U_x^\gamma = a^{-1}(a(\gamma,x)) \cap (\gamma , X)$ (resp., $V_y^\lambda = b^{-1}(b(\lambda,y)) \cap (\lambda , Y)$). Also, define $U_x := \bigcap_{\gamma \in \Gamma} U_x^\gamma$ and $\hat{U}_x = U_x \cap \phi^{-1}(V_{\phi(x)})$ (resp.,  $V_y := \bigcap_{\lambda \in \Lambda} V_y^\lambda$ and $\hat{V}_y = V_y \cap \phi(U_{\phi^{-1}(y)})$).

\begin{lemma}\label{example-lemma}
	In the above notation,
	
		$(i)$ $\gamma  U_x = U_{\gamma  x}$ and $\lambda  V_y = V_{\lambda  y}$,
		
		$(ii)$ $\gamma  \hat{U}_x = \hat{U}_{\gamma  x}$ and $\lambda  \hat{V}_y = \hat{V}_{\lambda  y}$,

\noindent for each  $\gamma \in \Gamma$ , $\lambda \in \Lambda$, and $x \in X, y\in Y$. 
\end{lemma}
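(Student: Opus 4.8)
The plan is to reduce everything to the classical cocycle identity and then carry out a short bookkeeping computation. The first step is to record that, from the orbit-equivalence relations $\phi(\gamma x)=a(\gamma,x)\phi(x)$ and $\phi^{-1}(\lambda y)=b(\lambda,y)\phi^{-1}(y)$, one obtains the cocycle identities
$$a(\gamma_1\gamma_2,x)=a(\gamma_1,\gamma_2 x)\,a(\gamma_2,x),\qquad b(\lambda_1\lambda_2,y)=b(\lambda_1,\lambda_2 y)\,b(\lambda_2,y);$$
this is \cite[Lemma 2.8]{li} (alternatively it follows from Lemma \ref{topologcally free lemma}(2) applied to the co-trivial groupoids $\Delta_X$ and $\Delta_Y$, where the partial product collapses onto the diagonal and $r=s=\mathrm{id}$). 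I would also note at the outset that, since $\Gamma$ and $\Lambda$ are discrete and $a,b$ are continuous, each $a(\gamma,\cdot)$ and $b(\lambda,\cdot)$ is locally constant; hence $U_x=\{x'\in X:a(\eta,x')=a(\eta,x)\ \text{for all}\ \eta\in\Gamma\}$ and $V_y=\{y'\in Y:b(\mu,y')=b(\mu,y)\ \text{for all}\ \mu\in\Lambda\}$, which is the description I will actually use. I will \emph{not} claim that $U_x$ is open, since it is an intersection of possibly infinitely many clopen sets and openness plays no role in this lemma.

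For part $(i)$, I would fix $\gamma\in\Gamma$, $x\in X$, and take $x'\in U_x$. The key computation is that, for each $\eta\in\Gamma$, the cocycle identity gives $a(\eta,\gamma x')=a(\eta\gamma,x')\,a(\gamma,x')^{-1}$; since $x'\in U_x$ we may replace $a(\eta\gamma,x')$ by $a(\eta\gamma,x)$ and $a(\gamma,x')$ by $a(\gamma,x)$, and then the cocycle identity (now at $x$) collapses the right-hand side to $a(\eta,\gamma x)$. Thus $\gamma x'\in U_{\gamma x}$, so $\gamma\cdot U_x\subseteq U_{\gamma x}$; applying this with $(\gamma,x)$ replaced by $(\gamma^{-1},\gamma x)$ yields the opposite inclusion, hence $\gamma\cdot U_x=U_{\gamma x}$. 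The identity $\lambda\cdot V_y=V_{\lambda y}$ is the verbatim mirror image using the cocycle identity for $b$.

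For part $(ii)$, recall $\hat U_x=U_x\cap\phi^{-1}(V_{\phi(x)})$. Given $z\in\hat U_x$, part $(i)$ already gives $\gamma z\in\gamma\cdot U_x=U_{\gamma x}$; and since $z\in U_x\subseteq U_x^{\gamma}$ we have $a(\gamma,z)=a(\gamma,x)$, so
$$\phi(\gamma z)=a(\gamma,z)\phi(z)=a(\gamma,x)\phi(z)\in a(\gamma,x)\cdot V_{\phi(x)}=V_{a(\gamma,x)\phi(x)}=V_{\phi(\gamma x)},$$
the last two equalities being part $(i)$ for $V$ and the relation $\phi(\gamma x)=a(\gamma,x)\phi(x)$. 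Hence $\gamma z\in U_{\gamma x}\cap\phi^{-1}(V_{\phi(\gamma x)})=\hat U_{\gamma x}$, so $\gamma\cdot\hat U_x\subseteq\hat U_{\gamma x}$, and equality follows again by symmetry. The assertion $\lambda\cdot\hat V_y=\hat V_{\lambda y}$ is handled the same way: for $w\in\hat V_y$ one has $\lambda w\in V_{\lambda y}$ by part $(i)$, and $b(\lambda,w)=b(\lambda,y)$ since $w\in V_y\subseteq V_y^{\lambda}$, whence $\phi^{-1}(\lambda w)=b(\lambda,y)\phi^{-1}(w)\in b(\lambda,y)\cdot U_{\phi^{-1}(y)}=U_{\phi^{-1}(\lambda y)}$, and one concludes as before.

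I do not expect any genuine obstacle here: the lemma is routine once the cocycle identity is available. The only thing requiring care is matching the \emph{order} of the factors in the cocycle identity to the convention of Definition \ref{groupoid oe} — a transposed version would still be a valid identity but would spoil the cancellations above — and, as noted, resisting the temptation to assert openness of the infinite intersections $U_x$, $V_y$, which is neither needed nor true in general.
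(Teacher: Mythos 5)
Your proposal is correct and takes essentially the same route as the paper: part $(i)$ is exactly the paper's computation with the cocycle identity from Lemma \ref{topologcally free lemma}, and part $(ii)$ uses $\phi(\gamma z)=a(\gamma,z)\phi(z)$ together with $(i)$ for $V$, which is precisely what the paper's chain of $b$-equalities does implicitly. Your side remarks (matching the order of the cocycle factors, not claiming openness of the intersections $U_x$, $V_y$, and noting the reverse inclusions by symmetry) are accurate and consistent with the paper's argument.
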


\begin{proof}
	$(i)$ Since actions are topologically free, $a(\gamma' \gamma , x) = a(\gamma' , \gamma  x) a(\gamma , x)$ by Lemma \ref{topologcally free lemma} . Given $u \in U_x$,
	\[
	a(\gamma' , \gamma  x) = a(\gamma' \gamma , x) a(\gamma , x)^{-1} = a(\gamma' \gamma , u) a(\gamma , u)^{-1} = a(\gamma' , \gamma  u),
	\]
	for each $\gamma' \in \Gamma$. This means that, $\gamma  U_x = U_{\gamma  x}$, and  by the same argument, we have $\lambda   V_y = V_{\lambda  y}$. 
	
	$(ii)$ Assume further that  $u \in \hat{U}_x$. Let us observe that  $\gamma  u \in \phi^{-1}(V_{\phi(\gamma  x)})$. Given $\lambda \in \Lambda$,
	\[
		b(\lambda,\phi(\gamma  u)) = b(\lambda,a(\gamma,u)\phi(u)) {=}   b(\lambda,a(\gamma,u)\phi(x)) = b(\lambda,a(\gamma,x)\phi(x)) = b(\lambda,\phi(\gamma   x)),
	\]
	thus, $\phi(\gamma   u) \in V_{\phi(\gamma   x)}$, as claimed. Similarly, $\lambda  \hat{V}_y = \hat{V}_{\lambda   y}$.
\end{proof}

The next proposition is an immediate consequence of Lemma \ref{example-lemma}. 
  
\begin{proposition}\label{example-proposition}
	Let $\Gamma \curvearrowright X \sim_{\rm coe} \Lambda \curvearrowright Y$ with respect to homeomorphism $\phi : X \rightarrow Y$ and let $\hat{U}_x$ and $\hat{V}_y$ be defined as above. Then, 
	\begin{align*}
	G_X := \{ (x,u) ; x \in X , u \in \hat{U}_x \}, \ \  G_Y := \{ (y,v) ; y \in Y , v \in \hat{V}_y \}
	\end{align*}
	are groupoids with respect to operations $(l,m) (m , n) = (l,n)$ and $(l,m)^{-1} = (m,l)$. The actions $\Gamma \curvearrowright G_X$ and $\Lambda \curvearrowright G_Y$, respectively defined by $\gamma  (x,u) = (\gamma  x,\gamma   u)$ and  $\lambda  (y,v) = (\lambda  y,\lambda  v)$,  are topologically free,  and $\Gamma \curvearrowright G_X \sim_{\rm coe} \Lambda \curvearrowright G_Y$, with respect to maps $a$, $b$ and groupoid isomorphism mapping $(x,u)$ on $(\phi(x),\phi(u))$.
\end{proposition}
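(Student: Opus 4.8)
The plan is to identify $G_X$ and $G_Y$ as the graphs of equivalence relations on $X$ and $Y$, so that they become (topological) subgroupoids of the respective pair groupoids $X\times X$ and $Y\times Y$, and then to transport the orbit equivalence datum $(\phi,a,b)$ essentially verbatim; the whole statement is really a corollary of Lemma~\ref{example-lemma}, as advertised. First I would check that ``$u\in\hat U_x$'' is an equivalence relation on $X$. Reflexivity is immediate from $x\in U_x$ and $\phi(x)\in V_{\phi(x)}$. Since $U_x^\gamma=\{w\in X: a(\gamma,w)=a(\gamma,x)\}$, membership $u\in U_x=\bigcap_{\gamma}U_x^\gamma$ amounts to $a(\gamma,u)=a(\gamma,x)$ for all $\gamma\in\Gamma$, a manifestly symmetric and transitive condition; the same holds for $v\in V_y$; and $\hat U_x=U_x\cap\phi^{-1}(V_{\phi(x)})$ is the intersection of two such relations, hence again an equivalence relation. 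Consequently $G_X$ is the graph of an equivalence relation, i.e.\ a subgroupoid of the pair groupoid $X\times X$, with $r(x,u)=x$, $s(x,u)=u$, composition $(l,m)(m,n)=(l,n)$ and inversion $(l,m)^{-1}=(m,l)$ all continuous for the subspace topology; the same applies to $G_Y$. Note that we do not — and in general cannot — claim these groupoids are \'etale, which is precisely why Definition~\ref{groupoid oe} was phrased for arbitrary topological groupoids.

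Next, that $\gamma\cdot(x,u)=(\gamma x,\gamma u)$ maps $G_X$ into itself is exactly Lemma~\ref{example-lemma}(ii): $\gamma u\in\gamma\hat U_x=\hat U_{\gamma x}$. Being a restriction of the homeomorphism $(x,u)\mapsto(\gamma x,\gamma u)$ of $X\times X$, it is a continuous groupoid automorphism, and $\gamma\mapsto(\gamma\cdot)$ is a homomorphism $\Gamma\to\mathrm{Aut}(G_X)$. Identifying $G_X^{(0)}=\{(x,x):x\in X\}$ with $X$, the restricted action is the original $\Gamma\curvearrowright X$, which is topologically free by hypothesis; hence $\Gamma\curvearrowright G_X$ is topologically free, and symmetrically $\Lambda\curvearrowright G_Y$ is topologically free.

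Finally, put $\Phi(x,u)=(\phi(x),\phi(u))$. From $\hat U_x=U_x\cap\phi^{-1}(V_{\phi(x)})$ and $\hat V_{\phi(x)}=V_{\phi(x)}\cap\phi(U_x)$ one reads off that $u\in\hat U_x$ if and only if $\phi(u)\in\hat V_{\phi(x)}$, so $\Phi$ is a well-defined bijection $G_X\to G_Y$ with inverse $(y,v)\mapsto(\phi^{-1}(y),\phi^{-1}(v))$; it is visibly a groupoid homomorphism and, as a restriction of $\phi\times\phi$, a homeomorphism. For the cocycle maps take $a$ and $b$ themselves, now regarded on $G_X^{(0)}\cong X$ and $G_Y^{(0)}\cong Y$; they are continuous by hypothesis. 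The key point is that for $(x,u)\in G_X$ we have $u\in\hat U_x\subseteq U_x^\gamma$, whence $a(\gamma,s(x,u))=a(\gamma,u)=a(\gamma,x)=a(\gamma,r(x,u))$ for every $\gamma\in\Gamma$, i.e.\ condition (1) of Corollary~\ref{main-corollary-unit-space} holds by construction (and likewise for $b$ on $G_Y$). Then
\[
\begin{aligned}
\Phi(\gamma\cdot(x,u)) &= (\phi(\gamma x),\phi(\gamma u)) = \big(a(\gamma,x)\phi(x),\,a(\gamma,u)\phi(u)\big) \\
&= a(\gamma,x)\cdot(\phi(x),\phi(u)) = a(\gamma,r(x,u))\,\Phi(x,u),
\end{aligned}
\]
and since $a(\gamma,r(x,u))=a(\gamma,s(x,u))$ this one identity witnesses both $r$- and $s$-orbit equivalence with the single cocycle $a$; the computation for $\Phi^{-1}$ and $b$ is identical. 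Hence $\Gamma\curvearrowright G_X\sim_{\rm coe}\Lambda\curvearrowright G_Y$ in the sense of Definition~\ref{groupoid oe}.

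I do not expect a serious obstacle: essentially everything is bookkeeping layered on top of Lemma~\ref{example-lemma} (and, through it, the cocycle identity of Lemma~\ref{topologcally free lemma}). The two points that deserve care are (a) verifying that $\hat U$ and $\hat V$ correspond under $\phi$ and that both define genuine equivalence relations, and (b) resisting the urge to ask for \'etaleness or local compactness of $G_X,G_Y$ — they are bona fide topological groupoids but typically neither, which is exactly the phenomenon the later \'etale-realization machinery (Corollary~\ref{key}) is designed to absorb.
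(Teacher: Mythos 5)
Your proposal is correct and follows the same route the paper intends: the paper simply declares the proposition an immediate consequence of Lemma \ref{example-lemma}, and your argument is the natural fleshing-out of that claim (the equivalence-relation structure of $\hat U_x$, $\hat V_y$, invariance via Lemma \ref{example-lemma}(ii), topological freeness read off the unit space, and the constancy $a(\gamma,u)=a(\gamma,x)$ for $u\in\hat U_x$ giving the single-cocycle $r$- and $s$-orbit equivalence). Your remarks on $\phi$ intertwining $\hat U$ and $\hat V$ and on not expecting \'etaleness are consistent with the paper's subsequent discussion of \'etale realizations.
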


To illustrate the above typical construction in a concrete example, let us use the example of odometer transformations in \cite[Theorem 3.3]{li} (with the warning that there is a typo in the definition of cocycles in the proof of the above cited result). Consider supernatural number $M = \Pi_{p \in P} p^{v_p(M)}$, where $P$ is the set of all prime numbers and $\sum_p = v_p(M) = \infty$. For a natural number $m$, let $\mathbb Z_m$ be the finite cyclic group of integers mod $m$. Fix a sequence $(m_k)_k$ of natural numbers such that for all prime number $p$, $v_p(m_k) \nearrow v_p(M)$, as $k \rightarrow \infty$. Now define $\mathbb{Z}_M$ to be the inverse limit of $\mathbb{Z}_{m_k}$. Natural embedding of $\mathbb{Z}$ in $\mathbb{Z}_M$ yields an action $\mathbb{Z} \curvearrowright \mathbb{Z}_M$ which is called odometer transformation for $M$. We denote the action $\mathbb{Z} \curvearrowright \mathbb{Z}_M$ and its corresponding diagonal action $\mathbb{Z} \curvearrowright \mathbb{Z}_M\times \mathbb{Z}_M$ by $\lambda_M$ and $\lambda^d_M$, respectively.

\begin{example} [Odometer transformation]\label{example1}
	For  $l \in \mathbb{N}$ and supernatural number $L$, define $X = X_{lL} = \mathbb{Z}_{lL}$, $\tilde{X} = \tilde{X}_{lL} = l.(\mathbb{Z}_{lL})$ and $Y = \mathbb{Z}_{l} \times \tilde{X}$. Let $\alpha : \mathbb{Z} \curvearrowright X$ be the odometer transformation,    $\tilde{\alpha}: l\mathbb Z\curvearrowright \tilde X$ be the corresponding restricted action, and $\lambda_l : \mathbb{Z}_l \curvearrowright \mathbb{Z}_l$ be given by translation. Then  $\alpha \sim_{\rm coe} \lambda_l \times \tilde{\alpha}$ (c.f., the proof of  \cite[Theorem 3.3]{li}). Indeed, for homeomorphism,
		\begin{align*}
		&\phi : X = \bigsqcup_{k=0}^{l-1} k + \tilde{X} \rightarrow \mathbb{Z}_l \times \tilde{X}; \ \phi(k+x) = (k,x),\ \ (x\in \tilde X),
	\end{align*}
	with inverse  $\psi : \mathbb{Z}_l \times \tilde{X} \rightarrow X$, given by $\psi(k,x)= k+x$, for $x\in \tilde X$,  and  the cocycle maps,
	\begin{align*}
		&a : \mathbb{Z} \times X = \bigsqcup_{k=0}^{l-1} k + \mathbb{Z}_l \times \bigsqcup_{k=0}^{l-1} k + \tilde{X} \rightarrow \mathbb{Z}_l \times l\mathbb{Z}; \\
		&\big((j+h), (k+x)\big) \mapsto
		\begin{cases}
			(j,h)  & j+k < l \\
			(j,h+l) & j+k \geq l,
		\end{cases}
	\end{align*}
	and, 
	\begin{align*}
		&b : (\mathbb{Z}_l \times l\mathbb{Z}) \times (\mathbb{Z}_l \times \tilde{X}) \rightarrow  \mathbb{Z}; \ \ \big((j,h), (k,x)\big) \mapsto  
		\begin{cases}
			j+h  & j+k < l \\
			j+h-l & j+k \geq l
		\end{cases}
	\end{align*}
	and element  $t = \tilde{t}l + k_0$ in $\mathbb{Z}$,  
	\begin{align*}
		\begin{split}
			\phi(t  (k+x)) = \phi( (k_0 + \tilde{t}l)  (k+x)) &= 
			\begin{cases}
				((k+k_0) , (\tilde{t}l + x))  & k_0+k < l \\
				((k+k_0 - l) , (\tilde{t}l + l + x)) &  k_0+k \geq l
			\end{cases}
			\\&=
			\begin{cases}
				(k_0 , \tilde{t}l)  (k,x) & k_0+k < l \\
				(k_0 , \tilde{t}l + l)  (k,x) &  k_0+k \geq l
			\end{cases}
			\\&=
			a((k_0 + \tilde{t}l) , (k+x)) \phi(k+x),
		\end{split}
	\end{align*}
and,
	\begin{align*}
		\begin{split}
			\psi((k_0,\tilde{t})  (k,x)) &= 
			\begin{cases}
				(k+k_0) + (\tilde{t}l + x)  & k_0+k < l \\
				(k+k_0-l) + (\tilde{t}l + x) &  k_0+k \geq l
			\end{cases}
			\\&=
			\begin{cases}
				(k_0 + \tilde{t}l)  (k+x) & k_0+k < l \\
				(k_0 + \tilde{t}l - l)  (k+x) &  k_0+k \geq l
			\end{cases}
			\\&= b((k_0 , \tilde{t}l) , (k,x)) \phi^{-1}(k,x).
		\end{split}
	\end{align*}
	Next, for $x \in \tilde{X}$ and $k \in \{0,...,l-1\}$, by the definition of $U_{k+x}^{t}$, 
	\begin{align*}
		U_{k+x}^{k_0 + \tilde{t}l} = 
		\begin{cases}
			\{k < l - k_0\} + \tilde{X} & k_0+k < l \\
			\{k \geq l - k_0\} +\tilde{X} &  k_0+k \geq l,
		\end{cases} 
	\end{align*}
	and similarly,
	\begin{align*}
		V_{(k,x)}^{(k_0 , \tilde{t}l)} = 
		\begin{cases}
			\{k < l - k_0\} \times \tilde{X} & k_0+k < l \\
			\{k \geq l - k_0\} \times \tilde{X} &  k_0+k \geq l.
		\end{cases} 
	\end{align*}
	Thus, $U_{k+x} = \bigcap_{t \in \mathbb{Z}} U_{k+x}^{t} = k + \tilde{X}$ and $V_{(k,x)} = \bigcap_{t \in \mathbb{Z}_l \times l \mathbb{Z}} V_{(k,x)}^{t} = \{k\}\times\tilde{X}$. Similarly, $\hat{U}_{k+x} = U_{k+x}$ and $\hat{V}_{(k,x)} = V_{(k,x)}$. Consider the groupoids $G_L^l$ and $H_L^l$ given by,
	\begin{align*}
		G_L^l = \{(k+x , k + u) ; k = 0,...,l-1 , x,u \in \tilde{X}\}, \\ H_L^l = \{((k,x) , (k , u)) ; k = 0,...,l-1 , x,u \in \tilde{X}\}.
	\end{align*}
	The actions $\mathbb{Z} \curvearrowright G_L^l$ and $\mathbb{Z}_l \times l\mathbb{Z} \curvearrowright H_L^l$ are defined by,
	\begin{align*}
		&z  (k+x,k+u) = (z+k+x , z+k+u) \\
		&z  ((k,x),(k,u)) = \begin{cases}
			((k+k_0 , l\tilde{z} + x),(k+k_0 , l\tilde{z} + u)) & k_0+k < l \\
			((k+k_0-l , l\tilde{z} + x),(k+k_0-l , l\tilde{z} + u)) &  k_0+k \geq l
		\end{cases} 
	\end{align*}
	where $z = \tilde{z}l + k_0$. The above groupoids are second-countable, Hausdorff, amenable, and principal. By Proposition \ref{example-proposition}, the actions are continuous orbit equivalent and by Lemma \ref{lemma lifted action} these action could be lift to their C$^*$-algebra. In order to calculate the groupoid C$^*$-algebra, first observe $H_L^l \cong \mathbb{Z}_l \times (\tilde{X} \times \tilde{X})$, where $\mathbb{Z}_l$ is regarded as co-trivial groupoid and $(\tilde{X} \times \tilde{X})$ is the groupoid of the full equivalence relation. For any Radon measure $\mu$ of full support on $\tilde{X}$,
	$
	C^*_r(G_L^l) \cong C^*_r(H_L^l)$, both being isomorphic to $C^*_r(\mathbb{Z}_l \times (\tilde{X} \times \tilde{X})) \cong \mathbb{C}^{l} \otimes C^*_r(\tilde{X} \times \tilde{X}) \cong \bigoplus_{1}^{l} \mathscr{K}(L^2 (\tilde{X} , \mu)), $
	where the last isomorphism is well known (c.f., \cite[Example 1.54]{williams}), and the first holds by the fact that $C^*_r(G_1 \times G_2) \cong C^*_r(G_1) \otimes C^*_r(G_2)$, for locally compact, Hausdorff groupoids $G_1, G_2$ (see for instance,   \cite[Lemma 5.6]{simple-algebra}, where the same fact is stated in the amenable \'{e}tale case, but these conditions are not needed).
\end{example}

In order to identify the \'{e}tale realizations of the groupoids in the above examples via the Kumjian-Renault theory, we need the following essentially known lemma, in which we restrict ourselves to the simpler case of full equivalence relations. The first two statements are well known (see, \cite[Example 1.54]{williams}, with a warning that the cited example has some typos), but we sketch the proof, for the sake of completeness.   

\begin{lemma}\label{realization} Let $X$ be a locally compact Hausdorff space and $\mu$ be a full support Radon measure on $X$, then for each $x\in X$, the induced representation $L^x:={\rm Ind} \delta_x$ extends to an isomorphism of $C^*_r(X\times X)$ onto the algebra $\mathscr{K}(L^2(X,\mu))$ of compact operators, with Cartan subalgebra $\mathscr{D}_X$, consisting of diagonal operators $D_\lambda:={\rm diag(\lambda)}$, for $\lambda=(\lambda_k)\in c_0(\mathbb Z)$. The isomorphisms $L^x$ is independent of $x$ up to equivalence, and given $\lambda=(\lambda_k)\in c_0(\mathbb Z)$ and a countable ONB $(\xi_n)_{n\in\mathbb Z}$ of $L^2(X,\mu)$ inside $C(X)$, it maps the elements $f_\lambda\in C_0(X\times X)$ defined by,
	$$f_\lambda(x,y):=\sum_{k\in\mathbb Z} \lambda_k\xi_k(x)\bar\xi_k(y),\ \ (x,y\in X),$$  
to the diagonal operator $D_\lambda$. In particular, the $C^*$-subalgebra $\mathscr{C}_X$ generated by these continuous functions in $C^*_r(X\times X)$ is the Cartan subalgebra of $C^*_r(X\times X)$, mapped by $L^x$ isometrically onto $\mathscr{D}_X$.
\end{lemma}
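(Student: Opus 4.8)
The plan is to realize $C^*_r(X\times X)$ concretely through a single one of the inducing representations and then transport the standard diagonal Cartan subalgebra of $\mathscr{K}$ back along that identification. Here $X\times X$ is the pair groupoid, which is \emph{not} \'{e}tale, so $C^*_r(X\times X)$ is built as in the non-\'{e}tale case with the Haar system given by $\mu$ on each fibre (the modular function being trivial); thus $C_c(X\times X)$ carries the convolution $f*h(a,c)=\int_X f(a,b)h(b,c)\,d\mu(b)$ and involution $f^*(a,b)=\overline{f(b,a)}$. Identifying the fibre over $u$ with $X$ and its measure with $\mu$, a direct index chase shows that $L^u(f)=\mathrm{Ind}\,\delta_u(f)$ is the integral operator on $L^2(X,\mu)$ with kernel $f$, i.e.\ $L^u(f)\eta(a)=\int_X f(a,b)\eta(b)\,d\mu(b)$. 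In particular, after these identifications $L^u(f)$ does not depend on $u$, and the canonical unitaries between the fibre Hilbert spaces intertwine $L^u$ and $L^{u'}$; this gives the asserted independence up to equivalence and shows $\|f\|_r=\sup_u\|L^u(f)\|=\|L^u(f)\|$ for any fixed $u$, so $L^u$ extends to an isometric $*$-homomorphism $C^*_r(X\times X)\to B(L^2(X,\mu))$.

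Next I would identify the range with $\mathscr{K}(L^2(X,\mu))$. Kernels in $C_c(X\times X)$ are Hilbert--Schmidt, so $L^u$ lands in $\mathscr{K}(L^2(X,\mu))$; conversely, for $\eta,\zeta\in C_c(X)$ the rank-one operator $\eta\otimes\bar\zeta$ has kernel $(a,b)\mapsto\eta(a)\overline{\zeta(b)}\in C_c(X\times X)$, and since $\mu$ is Radon and $X$ is locally compact Hausdorff, $C_c(X)$ is $\|\cdot\|_2$-dense in $L^2(X,\mu)$, so these operators are norm-dense among the finite-rank operators and hence in $\mathscr{K}(L^2(X,\mu))$. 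Therefore $L^u\colon C^*_r(X\times X)\to\mathscr{K}(L^2(X,\mu))$ is an isometric isomorphism.

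It remains to transport the Cartan subalgebra. Fix the given orthonormal basis $(\xi_n)_{n\in\mathbb Z}\subset C(X)$ of $L^2(X,\mu)$ (such a basis exists by Gram--Schmidt on a countable $L^2$-dense subset of $C_c(X)$). With respect to it, $\mathscr{D}_X=\{D_\lambda:\lambda\in c_0(\mathbb Z)\}$ is a Cartan subalgebra of $\mathscr{K}(L^2(X,\mu))$: it is maximal abelian because a compact operator commuting with every $\xi_k\otimes\bar\xi_k$ is diagonal and a diagonal compact operator automatically has $c_0$ diagonal; it is regular because the matrix units $\xi_m\otimes\bar\xi_n$ normalize $\mathscr{D}_X$ and generate $\mathscr{K}$; and the diagonal compression $T\mapsto\sum_k\langle T\xi_k,\xi_k\rangle\,\xi_k\otimes\bar\xi_k$ is a faithful conditional expectation onto $\mathscr{D}_X$ (it lands in $\mathscr{D}_X$ since the diagonal of a compact operator tends to $0$, and it is faithful on positives). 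For finitely supported $\lambda$ the function $f_\lambda(a,b)=\sum_k\lambda_k\xi_k(a)\overline{\xi_k(b)}$ lies in $C_c(X\times X)$, and using $\int_X\overline{\xi_k(b)}\eta(b)\,d\mu(b)=\langle\eta,\xi_k\rangle$ one computes $L^u(f_\lambda)=D_\lambda$. Since such $D_\lambda$ generate $\mathscr{D}_X$, the $C^*$-subalgebra $\mathscr{C}_X\subseteq C^*_r(X\times X)$ generated by the $f_\lambda$ satisfies $L^u(\mathscr{C}_X)=\mathscr{D}_X$, so $L^u$ restricts to an isometric isomorphism $\mathscr{C}_X\cong\mathscr{D}_X$; being the preimage under the isomorphism $L^u$ of the Cartan subalgebra $\mathscr{D}_X$ of $\mathscr{K}(L^2(X,\mu))$, the algebra $\mathscr{C}_X$ is a Cartan subalgebra of $C^*_r(X\times X)$.

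The genuinely substantive points are the careful setup of $C^*_r$ for the non-\'{e}tale pair groupoid $X\times X$ in the first step (so that the reduced norm collapses to a single $L^u$ and $L^u$ is faithful) and, in the last step, sidestepping convergence issues for $f_\lambda$ with general $\lambda\in c_0(\mathbb Z)$ by extracting generators only from finitely supported $\lambda$ and then passing to the generated $C^*$-algebra; the remaining verifications are routine.
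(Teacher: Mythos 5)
Your argument is correct and follows the same overall strategy as the paper: realize $C^*_r(X\times X)$ through a single induced representation as kernel operators on $L^2(X,\mu)$, identify the image with $\mathscr{K}(L^2(X,\mu))$ via density of the finite-rank operators with continuous compactly supported kernels, and pull the diagonal Cartan subalgebra back along this identification. Two points differ from the paper's proof and are worth comparing. First, faithfulness: you obtain it directly by observing that, after identifying all fibre Hilbert spaces with $L^2(X,\mu)$, the representations ${\rm Ind}\,\delta_u$ are mutually unitarily equivalent, so the reduced norm $\sup_u\|{\rm Ind}\,\delta_u(f)\|$ collapses to $\|L^u(f)\|$ for a single $u$ and $L^u$ is isometric on $C^*_r(X\times X)$. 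The paper cites the same equivalence of the ${\rm Ind}\,\delta_u$ but instead deduces injectivity from simplicity of $C^*_r(X\times X)$, which it gets from Morita equivalence with $\mathbb{C}$ via Renault's equivalence theorem; your route is more elementary and self-contained, while the paper's is shorter given the citation but invokes heavier machinery. Second, the identity $L^x(f_\lambda)=D_\lambda$: the paper computes it for arbitrary $\lambda\in c_0(\mathbb Z)$ by formally interchanging the sum and the integral, whereas you prove it only for finitely supported $\lambda$ and recover the statement about $\mathscr{C}_X$ by passing to generated $C^*$-algebras. Your caution is justified (for general $\lambda\in c_0(\mathbb Z)$ the pointwise convergence of $f_\lambda$ and the claim $f_\lambda\in C_0(X\times X)$ are genuinely delicate, a point the paper glosses over), and it suffices for the Cartan conclusion that is used later; but it does leave the lemma's literal assertion for general $\lambda\in c_0(\mathbb Z)$ unproved, and if you want it you should add a limiting argument (finitely supported $\lambda^{(n)}\to\lambda$ in $c_0$ gives $D_{\lambda^{(n)}}\to D_\lambda$ in norm) together with an explicit hypothesis ensuring $f_\lambda$ itself is a well-defined element. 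Your explicit verification that $\mathscr{D}_X$ is Cartan in $\mathscr{K}(L^2(X,\mu))$, which the paper dismisses as well known, is fine.
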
 
\begin{proof}
It is known that, for $x\in X$, the maps $L^x$ are all equivalent \cite[Exercise 1.4.8]{williams}, and give the reduced $C^*$-norm. Identifying $L^2(\{x\}\times X, \delta_x\times \mu)$ with $L^2(X,\mu)$, one could write $L^x$ as,
$$L^x(f)\xi(y)=\int_X f(y,z)\xi(z)d\mu(z), \ \ \big(y\in X, f\in C_c(X\times X), \xi\in L^2(X,\mu)\big),$$
which is a Hilbert-Schmidt operator with kernel $f$. In particular, $L^x$ is surjective. Since $X\times X$ is equivalent to the trivial group, $C^*_r(X\times X)$ is Morita equivalent to $\mathbb C$, by the celebrated Renault's equivalence theorem for reduced algebras  \cite[Theorem 2.71]{williams}, and so simple. This forced $L^x$ to be one-one, which is the only non trivial part of showing that it is a $C^*$-isomorphism.  

The fact that $\mathscr{D}$ is the Cartan subalgebra of $\mathscr{K}(L^2(X,\mu))$ is well known. Fix a countable ONB $(\xi_n)_{n\in\mathbb Z}$ of $L^2(X,\mu))$ inside $C(X)$, which exists by standard Fourier theory. Next, for each $n\in\mathbb Z$ and $\lambda=(\lambda_k)\in c_0(\mathbb Z)$, 
\begin{align*}
L^x(f_\lambda)\xi_n(y)&=\int_X f_\lambda(y,z)\xi(z)d\mu(z)\\
&=\sum_{k\in\mathbb Z} \lambda_k\int_X\xi_k(y)\bar\xi_k(z)\xi_n(z)d\mu(z)\\&=\sum_{k\in\mathbb Z} \lambda_k \langle\xi_n,\xi_k\rangle\xi_k(y)=\lambda_n\xi_n(y),
\end{align*} 
that is, $L^x(f_\lambda)=D_\lambda$. The last statement now follows, as $L^x$ is an isomorphism.  
\end{proof} 	
	  
\begin{lemma}\label{equivariant}
	In the notations of Example \ref{example1}, 
	
	$(i)$ there is a full support Radon measure $\mu$ on $X$ and a $\mathbb Z$-equivariant isomorphism $\theta_X: L^2(X, \mu)\to \ell^2(\mathbb Z)$, lifting to a Cartan preserving $\mathbb Z$-equivariant isomorphism $\tilde\theta_X: C^*_r(X\times X)\to C^*_r(\mathbb Z\times \mathbb Z)$.
	
	$(ii)$ there is a full support Radon measure $\nu$ on $Y$ and a $\mathbb (Z_l\times l\mathbb Z)$-equivariant isomorphism $\theta_Y: L^2(Y, \nu)\to \ell^2(\mathbb Z_l\times l\mathbb Z)$, lifting to a Cartan preserving $(Z_l\times l\mathbb Z)$-equivariant isomorphism $\tilde\theta_Y: C^*_r(Y\times Y)\to C^*_r\big((\mathbb Z_l\times l\mathbb Z)\times (\mathbb Z_l\times l\mathbb Z)\big).$   
\end{lemma}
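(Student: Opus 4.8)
The plan is to exploit the freedom in the choice of Haar system: although $X\times X$ carrying the Cantor topology of $X$ is far from being \'{e}tale, one can realize $C^*_r(X\times X)$ equivariantly as $C^*_r(\mathbb Z\times\mathbb Z)$ by putting on $X$ a finite Radon measure concentrated on a single dense orbit. For part $(i)$ I would fix $x_0\in X$ and note that its $\mathbb Z$-orbit $O=\{x_0+n:n\in\mathbb Z\}$ is dense, by minimality of the odometer. Then I choose strictly positive weights $(w_n)_{n\in\mathbb Z}$ with $\sum_n w_n<\infty$ and set $\mu:=\sum_n w_n\,\delta_{x_0+n}$. This $\mu$ is a finite, hence Radon, Borel measure on the compact metrizable space $X$; it has full support because $O$ is dense; and it is quasi-invariant under the $\mathbb Z$-action, with $\tfrac{d\gamma_*\mu}{d\mu}(x_0+n)=w_{n-\gamma}/w_n$ for $\gamma,n\in\mathbb Z$.

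Next I would identify $L^2(X,\mu)$ with the weighted orbit space $\{\xi:O\to\mathbb C:\sum_n w_n|\xi(x_0+n)|^2<\infty\}$ and define the unitary $\theta_X:L^2(X,\mu)\to\ell^2(\mathbb Z)$ by $(\theta_X\xi)(n):=w_n^{1/2}\xi(x_0+n)$. Writing $U_\gamma$ for the unitary Koopman representation of the nonsingular system $\mathbb Z\curvearrowright(X,\mu)$, i.e.\ $U_\gamma\xi(x)=\bigl(\tfrac{d\gamma_*\mu}{d\mu}(x)\bigr)^{1/2}\xi(x-\gamma)$, a one-line computation with the Radon--Nikodym cocycle above gives $\theta_XU_\gamma=\lambda_\gamma\theta_X$, where $\lambda$ is the left regular representation of $\mathbb Z$; that is the desired $\mathbb Z$-equivariance of $\theta_X$. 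I would then invoke Lemma \ref{realization} for $(X,\mu)$ and for $(\mathbb Z,\text{counting measure})$ to obtain isomorphisms $C^*_r(X\times X)\cong\mathscr K(L^2(X,\mu))$ and $C^*_r(\mathbb Z\times\mathbb Z)\cong\mathscr K(\ell^2(\mathbb Z))$ carrying the canonical Cartan subalgebras to the diagonal masas (for $\mathbb Z\times\mathbb Z$ the canonical Cartan is $C_0(\mathbb Z^{(0)})=c_0(\mathbb Z)$, the diagonal operators). Since $\mu$ is quasi-invariant, the diagonal $\mathbb Z$-action on each relation lifts to its reduced $C^*$-algebra and, read through these representations, becomes $\mathrm{Ad}(U_\gamma)$, respectively $\mathrm{Ad}(\lambda_\gamma)$. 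Hence $\tilde\theta_X:=\mathrm{Ad}(\theta_X)$, transported through the two identifications, is a $*$-isomorphism $C^*_r(X\times X)\to C^*_r(\mathbb Z\times\mathbb Z)$; the intertwining $\theta_XU_\gamma=\lambda_\gamma\theta_X$ makes it $\mathbb Z$-equivariant, and, being an isomorphism, it carries the pull-back of $c_0(\mathbb Z)$ — a Cartan subalgebra of $C^*_r(X\times X)$, concretely the diagonal masa of $\mathscr K(L^2(X,\mu))$ for the orbit basis $(w_n^{-1/2}\delta_{x_0+n})_n$ — onto $c_0(\mathbb Z)\subseteq C^*_r(\mathbb Z\times\mathbb Z)$, which is the required compatibility with the Cartan structure.

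For part $(ii)$ I would observe that $\mathbb Z_l\times l\mathbb Z$ acts on $Y=\mathbb Z_l\times\tilde X$ as a product action — $\mathbb Z_l$ translating the factor $\mathbb Z_l$, and $l\mathbb Z$ acting on $\tilde X\cong\mathbb Z_L$ by the odometer — so that $Y\times Y$ decomposes $(\mathbb Z_l\times l\mathbb Z)$-equivariantly as $(\mathbb Z_l\times\mathbb Z_l)\times(\tilde X\times\tilde X)$. Taking $\nu:=(\text{counting on }\mathbb Z_l)\times\tilde\mu$, with $\tilde\mu$ the orbit measure furnished by part $(i)$ applied to $l\mathbb Z\curvearrowright\tilde X$, one gets $L^2(Y,\nu)=\ell^2(\mathbb Z_l)\otimes L^2(\tilde X,\tilde\mu)$, $C^*_r(Y\times Y)\cong M_l(\mathbb C)\otimes\mathscr K\bigl(L^2(\tilde X,\tilde\mu)\bigr)$ and, on the target, $M_l(\mathbb C)\otimes\mathscr K(\ell^2(l\mathbb Z))$. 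Then $\theta_Y:=\mathrm{id}_{\ell^2(\mathbb Z_l)}\otimes\theta_{\tilde X}$ and $\tilde\theta_Y:=\mathrm{id}_{M_l(\mathbb C)}\otimes\tilde\theta_{\tilde X}$ do the job: the $\mathbb Z_l$-factor is already discrete and matches the target verbatim (both sides carry the action $\mathrm{Ad}(\lambda_{\mathbb Z_l})$, with Cartan the diagonal of $M_l(\mathbb C)$), while the $l\mathbb Z$-factor is handled by part $(i)$.

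The main obstacle — indeed the only non-routine point — is the choice of measure. The naive attempt to use the Haar measure of $\mathbb Z_{lL}$ (the unique invariant Radon measure, by unique ergodicity of the odometer) fails outright: the odometer has discrete spectrum, so its Koopman representation is nowhere near equivalent to the regular representation of $\mathbb Z$, and no equivariant isomorphism with $\mathscr K(\ell^2(\mathbb Z))$ can be extracted from it. Realizing that one must instead use a \emph{non-invariant} measure concentrated on a single dense orbit — turning $L^2(X,\mu)$ into a weighted $\ell^2(\mathbb Z)$ on which the Koopman representation is, up to the Radon--Nikodym unitary, exactly $\lambda$ — is the crux; once that is in place, the verifications of Radon-ness, full support, quasi-invariance, unitarity of $\theta_X$, the intertwining relation, and the passage to the $C^*$-level via Lemma \ref{realization} are all routine.
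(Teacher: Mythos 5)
Your construction is correct in substance, but it takes a genuinely different route from the paper's proof --- and your ``obstacle'' paragraph in fact contradicts it. The paper takes $\mu$ to be the normalized Haar measure of the procyclic group $X$ and maps a relabelled ``character basis'' of $L^2(X,\mu)$ onto the canonical basis of $\ell^2(\mathbb Z)$, asserting that the odometer permutes that basis exactly as translation permutes $(e_n)$; the Cartan statement is then checked on the continuous kernels $f_\lambda$ of Lemma \ref{realization}. Your spectral objection to this route is substantive: genuine characters of $X$ are eigenvectors of the Haar--Koopman representation (each $\chi$ is multiplied by the scalar $\overline{\chi(t)}$), so that representation has pure point spectrum and cannot be unitarily, or even projectively --- hence not through any equivariant isomorphism of the compacts --- equivalent to the regular representation; correspondingly, the functions $\xi^k_\ell$ as written in the paper are, for fixed $k$, scalar multiples of a single character, so they do not form an orthonormal basis, and the claimed permutation is an artifact of the relabelling. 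Your repair --- a finite atomic measure of full support on one dense orbit, the weighted-$\ell^2$ unitary $\theta_X$ intertwining the Radon--Nikodym--corrected Koopman representation with $\lambda$, then $\mathrm{Ad}(\theta_X)$ at the level of compacts, with part $(ii)$ obtained by tensoring with the $\mathbb Z_l$ factor --- is the natural way to arrive at a true statement of this kind, and the verifications you list (Radon, full support, quasi-invariance, unitarity, intertwining, passage through Lemma \ref{realization}) do go through.

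Two points should be made explicit if you write this up, because they mark where you depart from the paper's framework rather than merely from its proof. First, changing $\mu$ changes the Haar system $\{\delta_x\times\mu\}$ defining $C^*_r(X\times X)$, and for a non-invariant $\mu$ the naive lift $\gamma\cdot f=f\circ\gamma^{-1}$ of Lemma \ref{lemma lifted action} is no longer multiplicative for convolution against $\mu$ (the proof of that lemma uses invariance of the Haar system); the action you intertwine is the Radon--Nikodym--corrected one, $\mathrm{Ad}(U_\gamma)$, which on kernels sends $f$ to $\rho_\gamma(x)^{1/2}\rho_\gamma(y)^{1/2}\,f(x-\gamma,y-\gamma)$ with $\rho_\gamma=\tfrac{d\gamma_*\mu}{d\mu}$. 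You use exactly this action, but you should say so, since it is not literally the lift defined in the paper. Second, the Cartan you preserve is the atomic masa coming from the orbit basis $(w_n^{-1/2}\delta_{x_0+n})_n$, whose minimal projections do not have continuous kernels, so it is not the continuous-kernel Cartan $\mathscr C_X$ of Lemma \ref{realization}; this suffices for the intended application (the notion of equivariant \'{e}tale realization only asks for some invariant Cartan carried onto $c_0(\mathbb Z)$), but the discrepancy with the Cartan singled out in Lemma \ref{realization} should be acknowledged.
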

 \begin{proof}
$(i)$ First note that $X=\varprojlim \mathbb Z_{m_k}$ is a procyclic (compact abelian) group with (discrete) Pontryagin dual $\hat X=\varinjlim \mathbb Z_{m_k}$ \cite[Lemma 2.9.3]{rz}, and the latter is an ONB for $L^2(X,\mu)$, for any normalized left Haar measure $\mu$ on $X$ \cite[Corollary 4.27]{f}.  Each element of $\hat X=\bigcup \mathbb Z_{m_k}$ is a positive integer $\ell\ ({\rm mod}\ m_k)$, which corresponds to a character $$\xi_\ell^k\big((n_j)_{j=1}^\infty\big):={\rm exp}\big(-2\pi i(\ell+n_k)/m_k\big); \ \ \big((n_j)_{j=1}^\infty\in X).$$ 
For simplicity, let us use the notation $[n]_k:= n\ ({\rm mod}\ m_k)$, for $n\in \mathbb Z$. Take a bijection between $\hat X=\{[n]_k: n\in \mathbb Z\}$ and $\mathbb Z,$ and let $n(\ell, k)$ be the image of $[\ell]_k$ under this bijection, for $\ell=0,s, m_k-1$. Let $\mathbb Z$ act on itself by translation, then for $t\in \mathbb Z$, 
$n(\ell,k)+t$ is the image of $[\ell]_k+t=[\ell+t]_k$, thus $n(\ell,k)+t=n(\ell+t,k)$.  Let $(e_n)_{n\in \mathbb Z}$ be the canonical ONB of $\ell^2(\mathbb Z)$ and $\theta_X: L^2(X, \mu)\to \ell^2(\mathbb Z)$ be the isomorphism mapping $\xi_\ell^k$ to $e_{n(\ell,  k)}$, for $k\geq 1$, $\ell=0,s, m_k-1$.  Then,
\begin{align*}
t\xi_\ell^k\big((n_j)\big)&=\xi_\ell^k\big(([n_j-t]_j)\big)\\&={\rm exp}\big(-2\pi i(\ell+[n_k-t]_k)/m_k\big)\\&={\rm exp}\big(-2\pi i(\ell+n_k-t)/m_k\big)\\&={\rm exp}\big(-2\pi i([\ell-t]_k+n_k)/m_k\big)=\xi_{[\ell-t]_k}^k\big((n_j)\big)
\end{align*} 
for $k\geq 1$, $\ell=0,s, m_k-1$, and $(n_j)_{j=1}^\infty\in X.$ On the other hand, 
\[
 	t e_{n(\ell, k)}=e_{n(\ell, k)-t}=e_{n(\ell-t, k)},
 \]
thus $\theta_X$ is equivariant. The induced isomorphism $\tilde\theta_X$, after identification of Lemma \ref{realization}, is given by $$\langle\tilde\theta_X(T), \eta\rangle:=\langle T, \theta_X^{-1}(\eta)\rangle; \ \ \big(\eta\in \ell^2(\mathbb Z), T\in \mathscr{K}(L^2(X,\mu))\big),$$ 
and the $\mathbb Z$-action on $\mathscr{K}(L^2(X,\mu))$ is given by $t T(\xi)(x):=T(-t \xi)(-t x),$  for $t\in \mathbb Z, x\in X$, $\xi\in L^2(X, \mu)$, and $T\in \mathscr{K}(L^2(X,\mu))$, where $t\xi(x):=\xi(-t x)$, and likewise for the $\mathbb Z$-action on $\mathscr{K}(\ell^2(\mathbb Z))$. Now the fact that $\tilde\theta_X$ is equivariant follows directly from the same fact for $\theta_X$. Next, let us observe that $\tilde\theta_X$ is Cartan preserving. Indeed, we show the stronger assertion that the $\mathbb Z$-action fixes each point of the Cartan subalgebra $\mathscr{D}_X$. Again by Lemma \ref{realization}, it is enough to show that each function $f_\lambda$ is fixed by the $\mathbb Z$-action. For this, let $t\in \mathbb Z$ and $\lambda=(\lambda_{n{(\ell,k)}})\in c_0(\mathbb Z)$, and observe that,
\begin{align*}
f_\lambda((x_j),(y_j))&=\sum_{k\geq 1}\sum_{\ell=0}^{m_k} \lambda_{n(\ell, k)}\xi_\ell^k((x_j)) \bar\xi_\ell^k((y_j))\\&=\sum_{k\geq 1}\sum_{\ell=0}^{m_k} \lambda_{n(\ell, k)}{\rm exp}\big(-2\pi i(\ell+x_k)/m_k\big){\rm exp}\big(2\pi i(\ell+y_k)/m_k\big) \\&=\sum_{k\geq 1}\sum_{\ell=0}^{m_k} \lambda_{n(\ell, k)}{\rm exp}\big(2\pi i(y_k-x_k)/m_k\big),              
\end{align*}
hence,
\begin{align*}
	t f_\lambda((x_j),(y_j))&=f_\lambda(([x_j-t]_k),([y_j-t]_k))\\&=\sum_{k\geq 1}\sum_{\ell=0}^{m_k} \lambda_{n(\ell, k)}{\rm exp}\big(2\pi i([y_j-t]_k-[x_j-t]_k)/m_k\big)\\&=\sum_{k\geq 1}\sum_{\ell=0}^{m_k} \lambda_{n(\ell, k)}{\rm exp}\big(2\pi i([y_j-x_j]_k)/m_k\big)\\&=\sum_{k\geq 1}\sum_{\ell=0}^{m_k} \lambda_{n(\ell, k)}{\rm exp}\big(2\pi i(y_j-x_j)/m_k\big)= f_\lambda((x_j),(y_j)),             
\end{align*}
for each $x=(x_j), y=(y_j)\in X$, establishing the claim. On the other hand, $t\lambda=t(\lambda_{n{(\ell,k)}})=(\lambda_{n{([\ell-t]_k,k)}})$, which is again inside $c_0(\mathbb Z)$, since for $k^{'}$ large enough, $m_{k^{'}}\geq m_k+|t|$, and  for $0\leq \ell\leq m_k-1,$ we have, $0\leq \ell-t\leq m_k+|t|-1\leq m_{k'}-1$, thus $n{([\ell-t]_k,k)}=n([\ell-t]_{k^{'}},k^{'})=n(\ell-t,k^{'})$. Therefore, though the $\mathbb Z$-action does not fix the diagonal operators $D_\lambda$, it keeps the Cartan subalgebra $\mathscr{C}_X$ invariant. 

$(ii)$ For $Y=\mathbb Z_l\times lX$, we have $L^2(Y, \mu_l\times \nu)=\ell^2(\mathbb Z_l)\otimes L^2(lX, \nu)$, where $\mu_l$ is the counting measure on $\mathbb Z_l$, and the normalized left Haar measure  $\nu$ of  the clopen subgroup $lX$ of $X$ is simply the restriction of $\mu$. Now $\theta_X$ maps the subspace $L^2(lX, \nu)$ of $L^2(X,\mu)$ onto the subspace $\ell^2(l\mathbb Z)$. Therefore, $\theta_Y={\rm id}\otimes \theta_X$, where the first leg is the identity operator on $\ell^2(\mathbb Z_l)$. Likewise, the action of $\mathbb Z_l\times l\mathbb Z$ on $L^2(Y,\mu_l\times \nu)$ breaks into the translation action on $\mathbb Z_l$ and the restriction action of the subgroup $l\mathbb Z$ of $\mathbb Z$ on the invariant subspace $L^2(lX, \nu)$, thus all the assertions follow from part $(i)$.   
 \end{proof}  

\noindent {\bf Example 4.4 (continued).} For $l \in \mathbb{Z}$, consider the translation actions $\mathbb{Z}_l \times l\mathbb{Z} \curvearrowright \mathbb{Z}_l \times l\mathbb{Z}$ and $\mathbb{Z} \curvearrowright \mathbb{Z}$. Consider the bijection $\phi : \mathbb{Z} \rightarrow \mathbb{Z}_l \times l\mathbb{Z};\ \  (k + tl)\mapsto  (k,tl).$ Then $\mathbb{Z} \curvearrowright \mathbb{Z} \sim_{coe} \mathbb{Z}_l \times l\mathbb{Z} \curvearrowright \mathbb{Z}_l \times l\mathbb{Z}$, with respect to $\phi$ and the following continuous maps, 
\begin{align*}
&a : \mathbb{Z} \times \mathbb{Z} \rightarrow \mathbb{Z}_l \times l\mathbb{Z}; 
\ \ \big((k_1+t_1l), (k_2+t_2l)\big) \mapsto
\begin{cases}
(k_1,t_1l)  & k_1+k_2 < l \\
(k_1,t_1l+l) & k_1+k_2 \geq l,
\end{cases}
\\
&b : (\mathbb{Z}_l \times l\mathbb{Z}) \times (\mathbb{Z}_l \times \mathbb{Z}_l) \rightarrow  \mathbb{Z}; 
\ \ \big((k_1,t_1l), (k_2,t_2l)\big) \mapsto  
\begin{cases}
k_1+t_1l  & k_1+k_2 < l \\
k_1+t_1l-l & k_1+k_2 \geq l.
\end{cases}
\end{align*}
Applying the Proposition \ref{example-proposition} again, the corresponding actions of $\mathbb{Z}$ and $\mathbb{Z}_l \times l\mathbb{Z}$ on $G_l := \{ (lm +k , ln + k): k = 0,...,l-1, m,n \in \mathbb{Z}  \}$ and $H_l := \{ ((lm,k) , (ln , k)): k = 0,...,l-1, m,n \in \mathbb{Z}  \}$ are continuous orbit equivalent. These are discrete groupoids with $C^*_r(G_l) \cong C^*_r(G_l) \cong \bigoplus_{0}^{l-1} \mathscr{K}(\ell^2(\mathbb{Z}))$. Now Lemma \ref{equivariant} guarantees this to give equivariant \'{e}tale realization of the actions discussed in the beginning of this example.

Next, we give another non classical example of continuous orbit equivalence. First we need a lemma, which is implicit in \cite{li}.

\begin{lemma}\label{lemma example product}
In the notation of Example \ref{example1}, we have,
		
		$(i)$ $\lambda_n \times \lambda_m \sim_{\rm coe} \lambda_{mn}$,
		 
		 $(ii)$ for supernatural numbers $M,N$ and natural numbers $m,n$,
		\[((\mathbb{Z}_m \times m\mathbb{Z}) \curvearrowright H_M^m) \times ((\mathbb{Z}_n \times n\mathbb{Z}) \curvearrowright H_N^n) \sim_{\rm coe} ((\mathbb{Z}_{mn} \times mn\mathbb{Z}) \curvearrowright H_M^{mn}) \times (\mathbb{Z} \curvearrowright H_N^1).\]

\end{lemma}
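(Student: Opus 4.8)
The plan is to reduce both parts to a few soft properties of $\sim_{\rm coe}$ for actions on groupoids — transitivity, compatibility with forming products and with permuting product factors, and the fact that conjugacy is a special case — all of which are immediate from Definition \ref{groupoid oe} by composing, respectively multiplying, the data $(\phi,a,b)$ and using that $r(g,h)=(r(g),r(h))$ on a product groupoid. The other input is the elementary remark that any two transitive actions of groups on sets of the same finite cardinality are continuously orbit equivalent: pick an arbitrary bijection $\phi$ between the two sets; transitivity of both actions makes the two defining identities of Definition \ref{groupoid oe} solvable, determining $a$ and $b$, and continuity is vacuous on a discrete space.

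For (i), I would take $l=m$ and $L=n$ in Example \ref{example1}, giving $\lambda_{mn}\sim_{\rm coe}(\mathbb{Z}_m\curvearrowright\mathbb{Z}_m)\times\tilde\alpha$ with $\tilde\alpha$ the restricted action $m\mathbb{Z}\curvearrowright m\mathbb{Z}_{mn}$. A valuation count shows that multiplication by $m$ on $\mathbb{Z}_{mn}$ has kernel isomorphic to $\mathbb{Z}_m$, so $m\mathbb{Z}_{mn}\cong\mathbb{Z}_{mn}/\mathbb{Z}_m\cong\mathbb{Z}_n$, and transporting the $m\mathbb{Z}$-translation across $m\mathbb{Z}\cong\mathbb{Z}$ and $m\mathbb{Z}_{mn}\cong\mathbb{Z}_n$ identifies $\tilde\alpha$ with the odometer, i.e. $\tilde\alpha\sim_{\rm con}\lambda_n$. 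Also $(\mathbb{Z}_m\curvearrowright\mathbb{Z}_m)\sim_{\rm coe}\lambda_m$ (same, full, orbit equivalence relation; cocycles constant in the space variable). Multiplying these equivalences and permuting the two factors yields $\lambda_{mn}\sim_{\rm coe}\lambda_m\times\lambda_n\sim_{\rm coe}\lambda_n\times\lambda_m$, which by symmetry of $\sim_{\rm coe}$ is (i). (If only one of $m,n$ is a genuine natural number, name it $m$; nothing changes.)

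For (ii), the crux is that each system $(\mathbb{Z}_l\times l\mathbb{Z})\curvearrowright H_L^l$ is conjugate to a product. Through the groupoid isomorphism $H_L^l\cong\mathbb{Z}_l\times(\tilde X_{lL}\times\tilde X_{lL})$ of Example \ref{example1} (co-trivial $\mathbb{Z}_l$ times the full equivalence relation on $\tilde X_{lL}=l\cdot\mathbb{Z}_{lL}$), pushing the action formula of Example \ref{example1} forward shows that the $\mathbb{Z}_l$-coordinate carries the translation action $\mathbb{Z}_l\curvearrowright\mathbb{Z}_l$ while the $l\mathbb{Z}$-coordinate acts diagonally by translation on $(\tilde X_{lL})^2$, and via $\tilde X_{lL}\cong\mathbb{Z}_L$ and $l\mathbb{Z}\cong\mathbb{Z}$ the latter factor becomes precisely $\mathbb{Z}\curvearrowright H_L^1$. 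Hence $(\mathbb{Z}_l\times l\mathbb{Z})\curvearrowright H_L^l\sim_{\rm con}(\mathbb{Z}_l\curvearrowright\mathbb{Z}_l)\times(\mathbb{Z}\curvearrowright H_L^1)$. Applying this with $l=m,n,mn$ (the case $l=1$ being trivial) and permuting factors, the left side of (ii) is conjugate to $(\mathbb{Z}_m\curvearrowright\mathbb{Z}_m)\times(\mathbb{Z}_n\curvearrowright\mathbb{Z}_n)\times(\mathbb{Z}\curvearrowright H_M^1)\times(\mathbb{Z}\curvearrowright H_N^1)$ and the right side to $(\mathbb{Z}_{mn}\curvearrowright\mathbb{Z}_{mn})\times(\mathbb{Z}\curvearrowright H_M^1)\times(\mathbb{Z}\curvearrowright H_N^1)$. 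Since $(\mathbb{Z}_m\curvearrowright\mathbb{Z}_m)\times(\mathbb{Z}_n\curvearrowright\mathbb{Z}_n)\sim_{\rm coe}(\mathbb{Z}_{mn}\curvearrowright\mathbb{Z}_{mn})$ — either from (i) together with the two auxiliary equivalences above, or directly from the remark on finite transitive actions — the product rule applied to the common factors $\mathbb{Z}\curvearrowright H_M^1$ and $\mathbb{Z}\curvearrowright H_N^1$ finishes (ii).

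The one step needing genuine care, and the one I expect to be the main obstacle in a full write-up, is the conjugacy $(\mathbb{Z}_l\times l\mathbb{Z})\curvearrowright H_L^l\sim_{\rm con}(\mathbb{Z}_l\curvearrowright\mathbb{Z}_l)\times(\mathbb{Z}\curvearrowright H_L^1)$: one has to check that the group isomorphism $\mathbb{Z}_l\times l\mathbb{Z}\cong\mathbb{Z}_l\times\mathbb{Z}$, the homeomorphism $l\cdot\mathbb{Z}_{lL}\cong\mathbb{Z}_L$, and the groupoid isomorphism $H_L^l\cong\mathbb{Z}_l\times(\tilde X_{lL})^2$ are simultaneously equivariant, keeping track of the carry in the case split defining the action in Example \ref{example1}. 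The remaining verifications — the product and transitivity properties of $\sim_{\rm coe}$ and the finite-transitive-actions remark — are routine and I would dispatch each in a sentence.
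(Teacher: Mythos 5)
Your proposal is correct in substance, and for part (ii) it is essentially the paper's own argument: the paper likewise identifies each system $(\mathbb{Z}_l\times l\mathbb{Z})\curvearrowright H_L^l$ with the product $(\mathbb{Z}_l\curvearrowright\mathbb{Z}_l)\times(\mathbb{Z}\curvearrowright H_L^1)$ (in its notation, $\lambda_l\times\lambda_L^d$) via the identification $\tilde X_{lL}=l\cdot\mathbb{Z}_{lL}\cong\mathbb{Z}_L$, then permutes factors and invokes (i) together with the (implicitly used) compatibility of $\sim_{\rm coe}$ with products, exactly as you do; moreover, in the action formula of Example \ref{example1} the $\tilde X$-coordinate receives $l\tilde z+x$ in both cases of the case split, so there is no carry from the $\mathbb{Z}_l$-coordinate into the $\tilde X$-coordinate and the conjugacy you single out as the delicate step is in fact immediate -- it is precisely the identification the paper performs. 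Where you genuinely diverge is part (i): the paper proves it by exhibiting an explicit bijection $\mathbb{Z}_n\times\mathbb{Z}_m\to\mathbb{Z}_{nm}$ and four-case cocycles $a,b$ tracking the carries, whereas you dispose of it with the soft observation that any two transitive actions on finite sets of equal cardinality are continuously orbit equivalent (Definition \ref{groupoid oe} imposes no algebraic conditions on $a,b$, and continuity is vacuous on a finite discrete space); this is more elementary and, since the actions in question are free, Lemma \ref{topologcally free lemma} still supplies the cocycle identities needed later, while the paper's explicit formulas are never reused. One caution: your primary route to (i) misreads the notation -- in Example \ref{example1} and in this lemma, $\lambda_l$ for a natural number $l$ denotes the translation action $\mathbb{Z}_l\curvearrowright\mathbb{Z}_l$ (as the targets $(\mathbb{Z}_n\times\mathbb{Z}_m)$ and $\mathbb{Z}_{nm}$ of the paper's cocycles make clear), not the $\mathbb{Z}$-odometer on a finite cyclic group, and Example \ref{example1} is stated only for supernatural $L$, so ``take $L=n$'' falls outside its literal scope; but since your fallback argument via the finite-transitive remark yields exactly the statement $(\mathbb{Z}_m\curvearrowright\mathbb{Z}_m)\times(\mathbb{Z}_n\curvearrowright\mathbb{Z}_n)\sim_{\rm coe}\mathbb{Z}_{mn}\curvearrowright\mathbb{Z}_{mn}$ that the paper intends and uses in (ii), the proof stands as written.
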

\begin{proof}
	Consider the bijection $\phi : \mathbb{Z}_n \times \mathbb{Z}_m \rightarrow \mathbb{Z}_{nm};\ \ (t_n , t_m)\mapsto  t_m n + t_n$, and define the cocycle maps $a : (\mathbb{Z}_n \times \mathbb{Z}_m) \times (\mathbb{Z}_n \times \mathbb{Z}_m) \rightarrow \mathbb{Z}_{nm}$ and $b : \mathbb{Z}_{nm} \times \mathbb{Z}_{nm} \rightarrow (\mathbb{Z}_n \times \mathbb{Z}_m)$, defined by,
		\[
	a((\gamma_m , \gamma_n) , (t_m,t_n)) = 
	\begin{cases}
	(\gamma_m - m)n + (\gamma_n-n) & t_m + \gamma_m  \geq m ,\, t_n + \gamma_n  \geq n  \\
	\gamma_m n + (\gamma_n - n) & t_m + \gamma_m  < m ,\, t_n + \gamma_n  \geq n  \\
	(\gamma_m - m)n + \gamma_n & t_m + \gamma_m  \geq m ,\,t_n + \gamma_n  < n  \\
	\gamma_m n + \gamma_n & t_m + \gamma_m  < m ,\, t_n + \gamma_n  < n
	\end{cases}	
	\]
	and,
	\begin{align*}
	b(\gamma , t) =
	\begin{cases}
	\gamma_m  - m +1 , \gamma_n  - n  & \gamma  + t \geq mn ,\, \gamma_n + t_n  \geq  n \\
	(\gamma_m  - m, \gamma_n  ) & \gamma  + t \geq mn ,\, \gamma_n + t_n  <  n\\
	(\gamma_m  +1 , \gamma_n -n ) & \gamma  + t < mn ,\, \gamma_n + t_n  \geq  n \\
	(\gamma_m , \gamma_n  )  & \gamma  + t < mn ,\, \gamma_n + t_n  <  n
	\end{cases}
	\end{align*}
	for  $\gamma = \gamma_m n + \gamma_n$ and $t = t_m n +t_n$ in $\mathbb{Z}_{nm}$ and observe that,
		\begin{align*}
	\phi((\gamma_m , \gamma_n) (t_m,t_n)) &= 
	t_m n + t_n + (\gamma_m - m)n + (\gamma_n-n)\\&=
	a((\gamma_m , \gamma_n) , (t_m,t_n)) \phi (t_m,t_n),
	\end{align*}
	when $t_m + \gamma_m  \geq m , t_n + \gamma_n  \geq n$, and the same for the other three cases. Similarly, $
	\phi^{-1}(\gamma . t)=
	b(\gamma , t) \phi^{-1}(t)$, showing the  first statement. To see the second one, we use the identification $H_L^l$ and $\mathbb{Z}_L \times \tilde{X}_{lL}$ mentioned earlier. Let us identify actions $l\mathbb{Z} \curvearrowright \tilde{X}_{lL} = l\mathbb{Z} \curvearrowright l.(\mathbb{Z}_{lL})$ with  $\mathbb{Z} \curvearrowright \mathbb{Z}_L$ and observe that, for the diagonal action $\lambda^d_M: \mathbb Z\curvearrowright X_M\times X_M$, 
	
	\begin{align*}
	((\mathbb{Z}_m \times m\mathbb{Z}) \curvearrowright H_M^m ) \times ((\mathbb{Z}_n \times & n\mathbb{Z}) \curvearrowright H_N^n) =   \lambda_m \times \lambda_n \times \lambda^d_M\times\lambda^d_N 
	\\& \sim_{\rm coe} \lambda_{mn} \times \lambda^d_M\times\lambda^d_N
	\\&=
	((\mathbb{Z}_{mn} \times mn \mathbb{Z}) \curvearrowright \mathbb{Z}_{mn} \times (\tilde{X}_{mn} \times \tilde{X}_{mn})) \times \lambda^d_N
	\\&= 
	((\mathbb{Z}_{mn} \times mn \mathbb{Z}) \curvearrowright H_{M}^{mn}) \times (\mathbb{Z} \curvearrowright H_N^{1}).				
	\end{align*}

\end{proof}

\begin{example}[Product odometer transformation] \label{ot}
	Let $n_i , m_i \in \mathbb{N}$, and let $L_i$ be a supernatural number, for $i \in I:= \{1,2,\cdots,r\}$, and put $n:=\prod_{I} n_i = \prod_{I} m_i=:m$. Put $X := \prod_{I} \mathbb{Z}_{m_iL_i}$ and $Y := \prod_I \mathbb{Z}_{n_iL_i}$. The product odometer transformation actions $\mathbb{Z}^r \curvearrowright X$ and $\mathbb{Z}^r \curvearrowright Y$ are continuous orbit equivalent by Lemma \ref{lemma example product}. Therefore, 
	\begin{align*}
		\prod_I (\mathbb{Z}_{m_i} \times m_i \mathbb{Z}) \curvearrowright H^{m_i}_{L_i} & \sim_{\rm coe} (\mathbb{Z}_m \times \prod_I{m_i} \mathbb{Z}) \curvearrowright H_{L_1}^{m} \times \prod_{I\backslash\{1\}}  \mathbb{Z} \curvearrowright H_{L_i}^{1}
		\\&=
		(\mathbb{Z}_n \times \prod_I{n_i} \mathbb{Z}) \curvearrowright H_{L_1}^{n} \times \prod_{I\backslash\{1\}} \mathbb{Z} \curvearrowright H_{L_i}^{1}
		\\&\sim_{\rm coe} \prod_I (\mathbb{Z}_{n_i} \times n_i \mathbb{Z}) \curvearrowright H^{n_i}_{L_i}. 
	\end{align*}
Thus, $\mathbb{Z}^r \curvearrowright \prod_{I} G^{m_i}_{L_i} \sim_{\rm coe} \mathbb{Z}^r \curvearrowright \prod_{I} G^{n_i}_{L_i}$. By a similar argument as in Example \ref{example1}, $$C^*_r(G^{m_i}_{L_i}) \cong \bigotimes_{i = 1}^{r}  \bigoplus_{1}^{m_i} \mathscr{K}(L^2(\tilde{X}_{m_iL_i} , \mu_i)), \ C^*_r(G^{n_i}_{L_i}) \cong \bigotimes_{i = 1}^{r} \bigoplus_{1}^{n_i} \mathscr{K}(L^2(\tilde{X}_{n_iL_i} , \nu_i)),$$ and   $\mathbb{Z} \curvearrowright \bigotimes_{i = 1}^{r}  \bigoplus_{1}^{n_i} \mathscr{K}(L^2(\tilde{X}_{n_iL_i} , \nu_i))\sim_{\rm coe}\mathbb{Z} \curvearrowright \bigotimes_{i = 1}^{r}  \bigoplus_{1}^{m_i} \mathscr{K}(L^2(\tilde{X}_{m_iL_i} , \mu_i))$.
\end{example}

\vspace{.3cm}
Finally, let us give an example, as promised,  of non orbit equivalent actions with orbit equivalent (even conjugate) induced actions on \'{e}tale realizations. 

\begin{example} \label{example2}
	Consider the translation action $\mathbb Z\curvearrowright \mathbb Z$, and the action $\mathbb Z\curvearrowright \mathbb T$, defined by $n e^{i\theta}:=e^{i(\theta+n)},$ for $n\in \mathbb Z, \theta\in \mathbb R$. Then the actions  $\mathbb Z\curvearrowright \mathbb Z\times\mathbb Z$ and $\mathbb Z\curvearrowright \mathbb T\times\mathbb T$ could not be  continuous orbit equivalent  (as these equivalence relations are not topologically isomorphic), but they give rise to conjugate actions on their common \'{e}tale realization $\mathbb Z\times \mathbb Z$. We proceed as in Lemma \ref{equivariant} by constructing a $\mathbb Z$-equivariant isomorphism in the level of Hilbert spaces. Indeed, $\theta_{\mathbb T}: L^2(\mathbb T)\to L^2(\mathbb  Z)$ is noting but the Fourier-Plancherel transform $f\mapsto \hat f$, 
	$$\hat f(n):=\frac{1}{2\pi}\int_0^{2\pi} f(e^{i\theta})e^{-in\theta}d\theta, \ \ (n\in \mathbb Z, f\in L^2(\mathbb T)).$$
	The orthogonal basis of $L^2(\mathbb T)$ is $\hat{\mathbb T}=\mathbb Z$, given by $\xi_k(e^{i\theta})=e^{in\theta}$, for $k,n\in \mathbb Z, \theta\in \mathbb R$. Now it is easy to see that $\theta_{\mathbb T}$ is $\mathbb Z$-equivariant,
	\begin{align*}
		n\hat\xi_k(m)&=\hat\xi_k(m-n)=\frac{1}{2\pi}\int_0^{2\pi} \xi_k(e^{i\theta})e^{-i(m-n)\theta}d\theta\\&=\frac{1}{2\pi}\int_0^{2\pi} e^{-i(m-n-k)\theta}d\theta=\frac{1}{2\pi}\int_0^{2\pi} \xi_{n+k}(e^{i\theta})e^{-im\theta}d\theta=\hat\xi_{n+k}(m),		
	\end{align*}
	for $m,n, k\in\mathbb Z,$ that is,
	$n\theta_{\mathbb T}(\xi_k)=n e_k=e_{n+k}=\theta_{\mathbb T}(\xi_k),$
	as claimed. Let $\tilde\theta_{\mathbb T}: \mathscr{K}(L^2(\mathbb T))\to \mathscr{K}(\ell^2(\mathbb Z))$ be the   $\mathbb Z$-equivariant lift on compact operators and  for functions $f_\lambda$ defined by, 
	$$f_\lambda(e^{i\theta_1}, e^{i\theta_2}):=\sum_{k\in\mathbb Z}\lambda_k\xi_k(e^{i\theta_1})\bar\xi_k(e^{i\theta_2})=\sum_{k\in\mathbb Z}\lambda_k e^{i(\theta_1-\theta_2)}), \ \ (\lambda=(\lambda_k)\in c_0(\mathbb Z)),$$
	and for  $n\in \mathbb Z$ and $\theta_1, \theta_2\in \mathbb R$, observe that, 
	\[
	n f_\lambda(e^{i\theta_1}, e^{i\theta_2})=f_\lambda(e^{i(\theta_1-n)}, e^{i(\theta_2-n)})=f_\lambda(e^{i\theta_1}, e^{i\theta_2}),
	\]
	thus, $\mathbb Z$ fixes points of $\mathscr{D}_{\mathbb T}$, and it clearly also leaves $\mathscr{C}_{\mathbb Z}=c_0(\mathbb Z)$ invariant, showing that the induced actions $\mathbb Z\curvearrowright\mathbb Z\times \mathbb Z$ on the corresponding \'{e}tale realizations are conjugate.    
	
\end{example}

	\bibliographystyle{amsplain}

\end{document}